\newtheorem{theorem}{Theorem}[section]
\newtheorem{proposition}[theorem]{Proposition}
\newtheorem{lemma}[theorem]{Lemma}
\newtheorem{conjecture}[theorem]{Conjecture}
\newtheorem{corollary}[theorem]{Corollary}
\theoremstyle{definition}
\newtheorem{remark}[theorem]{Remark}
\newcommand{\NE}{\operatorname{NE}}
\newcommand{\Bl}{\operatorname{Bl}} 
\newcommand{\Bx}{\operatorname{Box}}
\newcommand{\pt}{\operatorname{pt}}
\newcommand{\Fl}{\operatorname{Fl}}
\newcommand{\Gr}{\operatorname{Gr}}
\newcommand{\PD}{\operatorname{PD}}
\newcommand{\Ker}{\operatorname{Ker}}
\newcommand{\Res}{\operatorname{Res}}
\newcommand{\Image}{\operatorname{Im}}
\newcommand{\End}{\operatorname{End}}
\newcommand{\ev}{\operatorname{ev}}
\newcommand{\Eff}{\operatorname{Eff}}
\newcommand{\Span}{\operatorname{Span}}
\newcommand{\sm}{{\rm sm}}
\newcommand{\res}{{\rm res}}
\newcommand{\sing}{{\rm sing}}
\newcommand{\exc}{{\rm exc}}
\newcommand{\C}{\mathbb{C}}
\newcommand{\PP}{\mathbb{P}}
\newcommand{\R}{\mathbb{R}}
\newcommand{\Z}{\mathbb{Z}}
\newcommand{\Q}{\mathbb{Q}}
\newcommand{\cO}{\mathcal{O}}
\newcommand{\cD}{\mathcal{D}}
\newcommand{\cR}{\mathcal{R}} 
\newcommand{\cC}{\mathcal{C}} 
\newcommand{\be}{\mathbf{e}} 
\newcommand{\tV}{\widetilde{V}} 
\newcommand{\frp}{\mathfrak{p}}
\newcommand{\Exc}{\Delta_{\rm exc}}
\def\parfrac#1#2{\frac{\partial #1}{\partial #2}}
\def\corr#1{\left\langle #1 \right\rangle}
\begin{document}

\title[Extremal transition and quantum cohomology]
{Extremal transition and quantum cohomology:
examples of toric degeneration}
\author{Hiroshi Iritani}
\email{iritani@math.kyoto-u.ac.jp}
\author{Jifu Xiao}
\email{xiao@math.kyoto-u.ac.jp}
\maketitle

\begin{abstract}
When a singular projective variety $X_\sing$ admits a
projective crepant resolution
$X_\res$ and a smoothing $X_\sm$,
we say that $X_\res$ and $X_\sm$ are related by
\emph{extremal transition}.
In this paper, we study a relationship between the quantum cohomology
of $X_\res$ and $X_\sm$ in some examples.
For three dimensional conifold transition, a result of Li and Ruan
\cite{Li-Ruan} implies that the quantum cohomology of a smoothing $X_\sm$
is isomorphic to a certain subquotient of the quantum cohomology of
a resolution $X_\res$ with the quantum variables of exceptional curves
specialized to one.
We observe that similar phenomena happen
for toric degenerations of $\Fl(1,2,3)$, $\Gr(2,4)$ and $\Gr(2,5)$
by explicit computations.
\end{abstract}

\section{Introduction}
Let $X_\sing$ be a Gorenstein normal projective variety.
Suppose that $X_\sing$ admits a projective crepant resolution
$\pi\colon X_\res \to X_\sing$
and a smoothing $X_\sm$ which is projective.
The passage from $X_\res$ to $X_\sm$ is called the \emph{extremal transition}
\cite{Morrison:looking}.
When $X_\sing$ is a threefold having only ordinary double points as singularities, this
is known as \emph{conifold transition}
and has been studied by many people, for example, as a means of
constructing new Calabi-Yau threefolds or finding mirrors.

This paper is an attempt to understand the change of quantum
cohomology under extremal transition and relate it with
the following diagram:
\begin{equation}
\label{eq:diagram}
\begin{CD}
X_\res @>{\pi}>> X_\sing @<{r}<< X_\sm
\end{CD}
\end{equation}
where $\pi$ is a resolution of singularities
and $r$ is a (continuous) retraction.
Recall that the (small) quantum product $\star$ of a smooth
projective variety $X$ defines a commutative ring structure on
$QH^*(X) = H^*(X)\otimes
\C[\![q_1,\dots,q_r]\!]$, where $q_i$'s are the Novikov (quantum)
variables associated to a basis of curve classes on $X$ and $r = \dim H^2(X)$.
This defines the quantum connection (or Dubrovin connection)
\[
\nabla_{q_i\parfrac{}{q_i}} = q_i \parfrac{}{q_i}
+ \frac{1}{z} (\phi_i \star)
\qquad
1\le i\le r
\]
with a parameter $z\in \C^\times$,
on the trivial bundle over the $q$-space with fiber the
cohomology group $H^*(X)$.
This is flat for all values of $z$.
Here $\phi_1,\dots,\phi_r$ is a basis of $H^2(X)$ dual to
the variables $q_1,\dots,q_r$.

In the case of threefold conifold transition,
Li and Ruan \cite{Li-Ruan} studied
the change of Gromov-Witten invariants and
functoriality of quantum cohomology.
In terms of the quantum connection, their result
can be restated as follows:
\begin{theorem}[see Theorem \ref{thm:conifold_transition_qc}
and Corollary \ref{cor:conifold_transition_qc}]
Let $X_\res \to X_\sing \leftarrow X_\sm$ be a 3-fold conifold transition.
Let $E_1,\dots,E_k$ be exceptional curves of $X_\res$. 
\begin{enumerate}
\item[(a)] The quantum connection of $X_\res$ is of the form
\[
\nabla^\res = \nabla' + \sum_{i=1}^k N_i \frac{dq^{E_i}}{1-q^{E_i}}
\]
where $\nabla'$ is a connection which is regular along
$\Exc = \{q^{E_1} = q^{E_2}
= \cdots = q^{E_k} = 1\}$ and $N_i \in \End(H^*(X_\res))$
is a nilpotent endomorphism.
\item[(b)] The residue endomorphisms $N_i$ along $q^{E_i} = 1$
define the following filtration $0 \subset W \subset V \subset H^*(X_\res)$: 
\begin{equation}
\label{eq:V_W}
V := \bigcap_{i=1}^k \Ker(N_i),
\qquad
W := V \cap V^\perp = \bigcap_{i=1}^k \Ker(N_i)
\cap \sum_{i=1}^k \Image(N_i).
\end{equation}
This filtration arises from the diagram \eqref{eq:diagram} as
$V = \Image \pi^*$ and $W = \pi^*(\Ker r^*)$.

\item[(c)] The connection $\nabla'|_{\Exc}$ induces a flat connection
on the vector bundle $(V/W) \times \Exc \to \Exc$
which is isomorphic to the small quantum connection
of $X_\sm$, under the isomorphism
$r^*\circ (\pi^*)^{-1}\colon V/W \cong H^*(X_\sm)$.
\end{enumerate}
In particular, the small quantum cohomology $QH^*(X_\sm)$
of $X_\sm$
is isomorphic to the subquotient $(V/W, \star|_{q_\exc =1})$
of the quantum cohomology of $X_\res$ along the locus where
all the exceptional quantum variables $q_\exc = (q^{E_1},\dots,q^{E_k})$
equal one.
\end{theorem}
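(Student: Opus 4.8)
The plan is to treat parts (a)--(c) as a translation of the genus-zero Gromov--Witten comparison of Li--Ruan \cite{Li-Ruan} into the language of the Dubrovin connection, after first recording the topological anatomy of the transition. Recall that $X_\res$ carries $k$ disjoint exceptional rational curves $E_1,\dots,E_k$, each a $(-1,-1)$-curve contracted by $\pi$, and that $X_\sm$ replaces the corresponding nodes by vanishing three-spheres. The Clemens--Friedman description of $H^*(X_\res)$ versus $H^*(X_\sm)$, in which the homology relations among the $[E_i]$ mirror the relations among the vanishing cycles, is what will ultimately produce the two-step filtration $W\subset V$. Throughout I would isolate the contribution of curve classes of the form $\sum_i d_i[E_i]$ with $d_i\ge 0$ from all other effective classes.

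For part (a), I would feed the exceptional classes into the quantum product using the divisor equation and the Aspinwall--Morrison multiple-cover formula. Because each $E_i$ is a rigid $(-1,-1)$-curve, its multiple covers contribute a purely local series; with three divisor insertions each meeting $E_i$ once, the degree-$d$ local invariant $d^3\cdot(1/d^3)=1$ sums to the geometric series $\sum_{d\ge 1}q^{dE_i}=q^{E_i}/(1-q^{E_i})$. Since the coefficient of $d\log q^{E_i}$ in $\nabla^\res$ is quantum multiplication by $\phi_i$, its polar part at $q^{E_i}=1$ takes the form $N_i\,q^{E_i}/(1-q^{E_i})$, and rewriting $d\log q^{E_i}=dq^{E_i}/q^{E_i}$ converts this into $N_i\,dq^{E_i}/(1-q^{E_i})$; collecting everything regular at $\Exc$ into $\nabla'$ gives the stated decomposition. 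For nilpotency of $N_i$ I would argue that $N_i$ is the logarithm of the unipotent local monodromy of a connection underlying a variation with an integral lattice, hence nilpotent; alternatively one checks directly that $N_i$ strictly shifts cohomological degree coming from the exceptional direction, so some power vanishes.

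For part (b), I would identify $\bigcap_i\Ker(N_i)$ with $\Image\pi^*$ by noting that $N_i$ is, up to a nonzero scalar, cup product with a class Poincaré-dual to a cycle supported near $E_i$, so that $N_i\alpha$ detects the pairing of $\alpha$ with $E_i$; a class lies in every $\Ker(N_i)$ exactly when it is orthogonal to all exceptional curves, which is the condition to descend to $X_\sing$, i.e.\ to lie in $\Image\pi^*$. For the description $W=V\cap V^\perp=\bigcap_i\Ker(N_i)\cap\sum_i\Image(N_i)$ I would use that each $N_i$ is self-adjoint for the Poincaré pairing, since quantum multiplication is Frobenius; then $\Image(N_i)=(\Ker N_i)^\perp$, whence $\sum_i\Image(N_i)=(\bigcap_i\Ker N_i)^\perp=V^\perp$ and the two descriptions of $W$ coincide. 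Matching $W$ with $\pi^*(\Ker r^*)$ is then the Clemens--Friedman relation transported through $\pi^*$: the classes killed by the retraction $r$ to $X_\sm$ are exactly those pulled back from $X_\sing$ lying in the span of the exceptional directions.

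For part (c), I would restrict $\nabla'$ to $\Exc$, verify it preserves $W$ and descends to $V/W$, and identify the descended flat connection with the small quantum connection of $X_\sm$ via $r^*\circ(\pi^*)^{-1}$; the ``in particular'' clause about the ring $QH^*(X_\sm)$ then follows by reading off structure constants from the connection. The key point is that a structure constant of $X_\res$ between classes in $V$, after discarding the $W$-part and specializing all $q_\exc=1$, equals the corresponding structure constant of $X_\sm$. I expect this to be the main obstacle: one must justify that the specialization $q_\exc\to 1$, which is divergent on all of $H^*(X_\res)$, becomes finite after passage to $V/W$ (the divergent exceptional resummations land in $W$ and are quotiented away), and one must match the surviving non-exceptional curve classes of $X_\res$ bijectively and invariant-preservingly with the curve classes of $X_\sm$. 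This last matching is precisely the content of the symplectic-sum/degeneration argument of \cite{Li-Ruan}; everything else reduces to the bookkeeping of the divisor equation and Poincaré duality assembled in parts (a) and (b).
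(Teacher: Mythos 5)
Your outline follows the same route as the paper: the polar part of $(v\star)$ comes from the Aspinwall--Morrison multiple-cover contributions of the $(-1,-1)$-curves via the divisor equation, the filtration is identified topologically through the exact sequences relating $H^*(X_\res)$, $H^*(X_\sing)$, $H^*(X_\sm)$, and the matching of structure constants at $q_\exc=1$ is Li--Ruan's degeneration formula. Three points in your mechanism need repair, though none is fatal. First, $N_i$ is \emph{not} cup product with a class supported near $E_i$: it is the rank-one operator $N_i(w)=(w\cdot[E_i])\,[E_i]$, which kills everything outside $H^2$ for degree reasons. Taken literally, cup product with $\PD(E_i)$ would not annihilate $1\in H^0$, so your $V=\bigcap_i\Ker N_i$ would fail to contain the identity and could not equal $\Image\pi^*$; with the correct rank-one description one also reads off $\sum_i\Image N_i=\sum_i\C[E_i]=W$ directly. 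Second, the cure for the divergence at $q_\exc=1$ is not the quotient by $W$: for $v,w\in V$ the polar term $\sum_i(v\cdot[E_i])(w\cdot[E_i])\frac{q^{E_i}}{1-q^{E_i}}[E_i]$ vanishes identically because $v\cdot[E_i]=0$, so the limit already exists in $V$; the quotient by $W$ is needed only because $r^*$ restricted to $V\cong H^*(X_\sing)$ has kernel $W$, and one checks $\lim_{q_\exc\to1}v\star w\in V$ by pairing against $[E_i]$ and using $r^*[E_i]=0$. Third, regularity of the non-polar part $R(v)$ along $\Exc$ requires knowing that for fixed $\beta\neq0$ the sum $\sum_{d:\pi_*(d)=\beta}\corr{v_1,v_2,v_3}^\res_{0,3,d}q^d$ is \emph{finite} for arbitrary insertions, not just pulled-back ones; Li--Ruan's Theorem B only covers classes in $\Image\pi^*$, and the reduction to that case by stripping off $H^2$-insertions with the divisor equation (the paper's Lemma on finiteness of poles) is elided in your write-up. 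Also, the degree-$\beta$ invariant of $X_\sm$ is the \emph{sum} over all $d$ with $\pi_*(d)=\beta$, a many-to-one collapse rather than a bijective matching of curve classes.
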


The idea that $QH^*(X_\sm)$ could be described as a subquotient of
$QH^*(X_\res)$ with respect to a certan filtration given by monodromy
arose out of the discussion of the first author with
Tom Coates and Alessio Corti around 2010.
We also want to draw attention to a recent paper of Lee-Lin-Wang
\cite{Lee-Lin-Wang:A+B}, where they studied the behaviour
of $A+B$-theory under conifold transition of Calabi-Yau threefolds.

In this paper we study analogous phenomena 
for higher dimensional extremal transitions.
As studied in \cite{Gonciulea-Lakshmibai, BCFKvS},
a partial flag variety admits a flat degeneration to a singular Gorenstein
toric variety $X_\sing$, which in turn admits a toric crepant
resolution $X_\res$.
We study extremal transitions of
$\Fl(1,2,3)$, $\Gr(2,4)$ and $\Gr(2,5)$ by explicit computations.
A toric degeneration of $\Fl(1,2,3)$ and its resolution
is a special case of the threefold conifold transition
and we confirm the above result.
In the remaining two cases, we find analogous results
together with some new phenomena, as follows. 

\begin{itemize}
\item For $\Gr(2,4)$, the map $r^* \colon H^*(X_\sing) \to H^*(X_\sm)$
is not surjective and the subquotient $(V/W,\star|_{q_\exc= 1})$
of $H^*(X_\res)$ is identified with a \emph{proper} subring 
$\Image r^* \subsetneqq QH^*(\Gr(2,4))$, 
where $V$, $W$ are defined by the residue endomorphism 
$N$ as in \eqref{eq:V_W}. 
If we consider the weight filtration $\{W_\bullet\}$
associated to $N$, we can extend
the inclusion $(V/W,\star|_{q_\exc=1}) \hookrightarrow
QH^*(\Gr(2,4))$
to an isomorphism $W_0/W_{-1} \cong QH^*(\Gr(2,4))$.
The isomorphism $W_0/W_{-1} \cong QH^*(\Gr(2,4))$ 
however involves an imaginary number.

\item For $\Gr(2,5)$, the subquotient $(V/W, \star|_{q_\exc =1})$ 
is isomorphic to $QH^*(\Gr(2,5))$, 
where $V$, $W$ are defined by the residue endomorphisms 
$N_2$, $N_3$ as in \eqref{eq:V_W}. 
In this case, $W\subset \Image \pi^* \subsetneqq V$ and the isomorphism 
$V/W \cong H^*(\Gr(2,5))$ coincides with $r^*\circ (\pi^*)^{-1}$ 
only on the subspace $\Image \pi^*/W$. 
Also, the quotient $W_0/W_{-1}$
associated to the weight filtration $\{W_\bullet\}$ of $a N_2 + b N_3$
($a\neq 0$, $b\neq 0$) has dimension bigger than 
$\dim H^*(\Gr(2,5))$
\end{itemize}
See Theorems \ref{thm:flag}, \ref{thm:Gr(2,4)}, \ref{thm:Gr(2,4)_weight_filtration},
\ref{thm:Gr(2,5)}, \ref{thm:Gr(2,5)_topology} 
for more details. Note also that $\Fl(1,2,3)$ and
$\Gr(2,4)$ are hypersurfaces in toric varieties whereas
$\Gr(2,5)$ is not.

This paper is structured as follows. In \S \ref{sec:prelim}, we
introduce notation on Gromov-Witten invariants and quantum
cohomology. In \S \ref{sec:conifold}, we study conifold transition
in dimension 3 using a result of Li and Ruan \cite{Li-Ruan}.
In \S \ref{sec:flag}--\ref{sec:Gr(2,5)},
we study extremal transitions of $\Fl(1,2,3)$, $\Gr(2,4)$
and $\Gr(2,5)$. In \S\ref{sec:conjecture}, we formulate 
a conjecture for the change of quantum cohomology under 
extremal transitions of partial flag varieties.

\medskip
\noindent
{\bf Acknowledgements.}
H.I.~thanks Tom Coates, Alessio Corti, Yunfeng Jiang and Yongbin Ruan
for very helpful conversations and related collaborations.
H.I.~also thanks Yoshinori Namikawa for a very helpful conversation
on conifold transition. 
J.X.~thanks Changzheng Li for a helpful conversation 
on Fano varieties. 
We thank an anonymous referee who suggested 
to address the issues dealt in \S\ref{subsec:topology_Gr(2,5)}.

\section{Preliminaries}
\label{sec:prelim}
In this section we fix notation for Gromov-Witten invariants
and quantum cohomology.
For details on Gromov-Witten theory, we refer the reader
to \cite{Cox-Katz} and references therein.
In this paper we only consider cohomology classes of even
degree and denote by $H^*(X)$ the even part
$H^{\ev}(X,\C)$ of the cohomology group with
complex coefficients.

\subsection{Gromov-Witten invariants}
Let $X$ be a smooth projective variety.
For a second homology class $\beta \in H_2(X,\Z)$ and
non-negative integers $g,n$, we denote
by $\overline{M}_{g,n}(X,\beta)$
the moduli space of stable maps of degree $\beta$ and genus $g$
with $n$ marked points.
This has a virtual fundamental class
$\left[\overline{M}_{g,n}(X,\beta)\right]_{\rm vir}
\in H_{2D}(\overline{M}_{g,n}(X,\beta))$ of dimension
$D = (1-g) (\dim X-3) + n + \int_\beta c_1(X)$.
Let $\ev_i \colon \overline{M}_{g,n}(X,\beta) \to X$
be the evaluation map at the $i$th marked point.
Gromov-Witten invariants are defined by
\[
\corr{\gamma_1,\dots,\gamma_n}^X_{g,n,\beta}
=\int_{\left[\overline{M}_{g,n}(X,\beta)\right]_{\rm vir}}
\ev^*_1(\gamma_1) \cup\cdots \cup
\ev^*_n(\gamma_n)
\]
where $\gamma_1,\cdot\cdot\cdot,\gamma_n\in H^*(X)$.
In this paper, we are mainly interested in three-point genus-zero
Gromov-Witten invariants, and the associated small quantum cohomology.

\subsection{Quantum cohomology}
We choose a basis $\{\phi_0,\phi_1,\dots,\phi_N\}$ of $H^*(X)$
such that:
\begin{itemize}
\item[(1)] $\phi_0$ is the identity element of $H^*(X)$;
\item[(2)] $\phi_1,\cdot\cdot\cdot,\phi_r$ form a nef integral basis
for $H^2(X,\Z)/{\rm torsion}$, where $r$ is the rank of
$H^2(X,\Z)$;
\item[(3)] $\phi_i$ is homogeneous.
\end{itemize}
Let $(\alpha,\beta) = \int_X \alpha \cup \beta$ denote the Poincar\'e
pairing.
Let $\{\phi^0,\dots,\phi^N\}$ denote the basis dual to
$\{\phi_0,\dots,\phi_N\}$ with respect to the Poincar\'e pairing:
$(\phi_i,\phi^j) = \delta_i^j$.
Notice that the condition (2) above is equivalent to the condition
that the cone spanned by the dual basis $\{\phi^1,\dots,\phi^r\}$
in $H^{2\dim X -2}(X,\R) \cong H_2(X,\R)$ contains the cone
$\overline{\NE}(X)$ of effective curves (the Mori cone).

Let $q_1,\dots,q_r$ be the Novikov variables which are dual
to the basis $\{\phi_1,\dots,\phi_r\}$ of $H^2(X)$.
For $\beta \in H_2(X)$, we write
\[
q^\beta = q_1^{\phi_1 \cdot \beta} q_2^{\phi_2 \cdot \beta}
\cdots q_r^{\phi_r \cdot \beta}.
\]
Note that if $\beta$ is an effective class, the right-hand side
only contains non-negative powers of $q_1,\dots, q_r$.
We define the Novikov ring to be $\Lambda := \C[\![q_1,\dots,q_r]\!]$.
The small quantum product $\star$ on $H^*(X) \otimes \Lambda$
is defined by
\[
(u \star v, w) = \sum_{\beta \in \Eff(X)}
\corr{u,v,w}_{0,3,\beta}^X q^\beta.
\]
The product $\star$ defines an associative and commutative
ring structure on $H^*(X)\otimes \Lambda$.
Moreover this is graded with respect to the grading
$\deg q_i = 2 \rho_i$ and the usual grading on $H^*(X)$,
where $c_1(X) = \sum_{i=1}^r \rho_i \phi_i$.
This is called the small quantum cohomology
and denoted by $QH^*(X)$.
The structure constants of small quantum cohomology
are not known to be convergent in general (as power series
in $q_1,\dots,q_r$); however they are convergent
for all the examples in this paper.


\subsection{Quantum connection}
The quantum cohomology associates a pencil of flat connection,
called the quantum connection or Dubrovin connection.
This is a flat connection $\nabla$ on the trivial $H^*(X)$-bundle over
$\C^r$ with logarithmic singularities along the normal
crossing divisor $q_1 q_2 \cdots q_r = 0$, given by:
\[
\nabla_{q_i \parfrac{}{q_i}}
= q_i\parfrac{}{q_i} + \frac{1}{z} (\phi_i \star).
\]
Here $z\in \C^\times$ is a parameter of the pencil.
The flatness follows from the associativity of the quantum product.
When we identify $v = \sum_{i=1}^r v^i \phi_i \in H^2(X)$
with the logarithmic vector field $\partial_v = \sum_{i=1}^r v^i q_i\parfrac{}{q_i}$
on $\C^r$,
we can write the quantum connection in the following way:
\[
\nabla_v = \partial_v + \frac{1}{z} (v\star).
\]
In this paper, we relate the quantum connections
of a smoothing and a resolution.

\section{Conifold transition and quantum cohomology}
\label{sec:conifold}
In this section we describe the change of quantum cohomology
under conifold transition in dimension three, using a result
of Li-Ruan \cite{Li-Ruan}.
Our main result in this section is stated in
Theorem \ref{thm:conifold_transition_qc}.
We observe that the quantum cohomology
of a smoothing arises as a limit of the quantum
cohomology of a resolution when the quantum
variables associated to exceptional curves go to one.

\subsection{Geometry of conifold transition}
\label{subsec:geometry_conifold}
The conifold transition in dimension 3 is a
surgery which replaces a $(-1,-1)$-rational curve
with a real 3-sphere. In this section we describe topological
properties of the conifold transition.
See e.g.~\cite{Morrison:looking, Smith-Thomas-Yau} for
more background material.

Let $X_\sing$ be a three-dimensional
projective variety whose
only singularities are ordinary double points $p_1,\dots,p_k$.
Recall that an ordinary double point (or $A_1$-singularity)
is a singularity whose neighbourhood
is analytically isomorphic to a neighbourhood of the origin
in $\{xy = zw \} \subset \C^4$.
Let $X_\res$ be a small resolution of $X_\sing$
and suppose that $X_\sing$ admits a
smoothing $X_\sm$.
The passage from $X_\res$ to $X_\sm$ is called
the \emph{conifold transition}.
Since we are interested in Gromov-Witten theory,
we assume that both $X_\sing$ and $X_\sm$
are projective.
The inverse image $E_i$ of $p_i$ in the small resolution $X_\res$
is a rational curve whose normal bundle is $\cO(-1) \oplus
\cO(-1)$. The vanishing cycle $S_i\subset X_\sm$
associated to $p_i$ is a real $3$-sphere.
In topological terms, the conifold transition replaces a
neighbourhood $S^2 \times D^4$ of $E_i$
with a neighbourhood $D^3 \times S^3 \cong T^*S_i$
of $S_i$.
There are two natural maps:
\begin{itemize}
\item a morphism $\pi: X_\res \longrightarrow X_\sing$
contracting the rational curves $E_1,\dots,E_k$;
\item a continuous map $r: X_\sm \longrightarrow X_\sing$
contracting the real $3$-spheres $S_1,\dots,S_k$.
\end{itemize}
They give the following correspondence between the cohomology
groups of the resolution and the smoothing:
\[
\xymatrix{
H^*(X_\res)  & & H^*(X_\sm) \\
& H^*(X_\sing) \ar[ul]^{\pi^*}
\ar[ur]_{r^*}
&
}
\]
Set $E = E_1\cup E_2 \cup \cdots \cup E_k\subset X_\res$
and
$S = S_1 \cup S_2 \cup \cdots \cup S_k \subset X_\sm$.
The relative cohomology exact sequence gives the following
exact sequences:
\begin{align*}
\begin{CD}
0 @>>> H^2(X_\res,E) @>>> H^2(X_\res) @>>>
H^2(E) \\
0 @>>> H^4(X_\res,E) @>>> H^4(X_\res) @>>>
0 \\
@. 0 @>>> H^2(X_\sm,S) @>>> H^2(X_\sm)
@>>> 0 \\
@.  H^3(S)  @>>>
H^4(X_\sm, S) @>>> H^4(X_\sm) @>>> 0
\end{CD}
\end{align*}
Set $P = \{p_1,\dots,p_k\} \subset X_\sing$.
Then we have $H^*(X_\res,E) \cong H^*(X_\sing, P)
\cong H^*(X_\sm,S)$ and $H^i(X_\sing,P)
\cong H^i(X_\sing)$ for $i\ge 2$. Therefore we obtain:
\begin{align}
\label{eq:pistar_rstar}
\begin{split}
& 0 \to H^2(X_\sing) \xrightarrow{\pi^*}
H^2(X_\res) \to \C^k,
\qquad \pi^* \colon H^4(X_\sing) \cong H^4(X_\res), \\
& \C^k  \to H^4(X_\sing) \xrightarrow{r^*}
H^4(X_\sm) \to 0,
\, \qquad r^* \colon H^2(X_\sing) \cong H^2(X_\sm).
\end{split}
\end{align}
Combining these sequences, we obtain:
\begin{equation}
\label{eq:dual_sequences}
\begin{CD}
0 @>>> H^2(X_\sm) @>>> H^2(X_\res) @>>> \C^k, \\
\C^k @>>> H^4(X_\res) @>>> H^4(X_\sm)  @>>> 0.
\end{CD}
\end{equation}
Note that the map $H^2(X_\res) \to \C^k$ in the first sequence
sends a class
$\alpha \in H^2(X_\res)$ to the vector $([E_1] \cdot \alpha, \dots ,
[E_k] \cdot \alpha) \in \C^k$.

\begin{lemma}
\label{lem:duality}
The two sequences in \eqref{eq:dual_sequences} are dual to each
other with respect to the Poincar\'e pairing.
In other words, the image of the standard basis vector $e_i \in \C^k$
in $H^4(X_\res)$ under the map $\C^k \to H^4(X_\res)$
in the second sequence of \eqref{eq:dual_sequences}
is the Poincar\'e dual of $E_i$.
\end{lemma}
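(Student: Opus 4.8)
The plan is to realize the second sequence of \eqref{eq:dual_sequences} as the transpose of the first under Poincaré duality. Since $X_\res$ and $X_\sm$ are smooth projective threefolds, the cup-product pairing into $H^6\cong\C$ is perfect and identifies $H^4(X_\res)\cong H^2(X_\res)^\vee$ and $H^4(X_\sm)\cong H^2(X_\sm)^\vee$. Under these identifications, the dual of a three-term exact sequence $0\to H^2(X_\sm)\xrightarrow{f}H^2(X_\res)\xrightarrow{g}\C^k$ is a sequence $\C^k\xrightarrow{g^*}H^4(X_\res)\xrightarrow{f^*}H^4(X_\sm)\to0$ of the same shape (and automatically of the same exactness) as the second sequence. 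It therefore suffices to check that the transposes $g^*$ and $f^*$ coincide with the two maps $\C^k\to H^4(X_\res)$ and $H^4(X_\res)\to H^4(X_\sm)$ of \eqref{eq:dual_sequences}.

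First I would compute $g^*$, which yields the explicit claim of the lemma. The map $g\colon H^2(X_\res)\to\C^k$ sends $\alpha$ to $([E_1]\cdot\alpha,\dots,[E_k]\cdot\alpha)$, and the defining property of the Poincaré dual $\PD(E_i)\in H^4(X_\res)$ gives $[E_i]\cdot\alpha=\int_{E_i}\alpha=\int_{X_\res}\alpha\cup\PD(E_i)=(\alpha,\PD(E_i))$. Hence the $i$-th component of $g$ is the pairing with $\PD(E_i)$, so $g^*$ sends the standard basis vector $e_i$ to $\PD(E_i)$, exactly as asserted. Next I would identify $f^*$ with the second-sequence map $h:=r^*\circ(\pi^*)^{-1}\colon H^4(X_\res)\to H^4(X_\sm)$, where $f=\pi^*\circ(r^*)^{-1}$. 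Since $r^*\colon H^2(X_\sing)\cong H^2(X_\sm)$ and $\pi^*\colon H^4(X_\sing)\cong H^4(X_\res)$ are isomorphisms by \eqref{eq:pistar_rstar}, substituting $u=r^*a$ and $w=\pi^*b$ shows that the identity $h=f^*$ is equivalent to $(\pi^*a,\pi^*b)_{X_\res}=(r^*a,r^*b)_{X_\sm}$ for all $a\in H^2(X_\sing)$ and $b\in H^4(X_\sing)$. I would prove this by showing both sides equal $\int_{X_\sing}a\cup b$: the birational morphism $\pi$ satisfies $\int_{X_\res}\pi^*(a\cup b)=\int_{X_\sing}a\cup b$ by the projection formula, while the retraction $r$, being a homeomorphism $X_\sm\setminus S\xrightarrow{\sim}X_\sing\setminus P$ away from the contracted spheres, has degree one and hence satisfies $\int_{X_\sm}r^*(a\cup b)=\int_{X_\sing}a\cup b$.

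The main obstacle is this last step, where the singular variety $X_\sing$ and the merely continuous retraction $r$ intervene. One must interpret $\int_{X_\sing}a\cup b$ via the fundamental class $[X_\sing]\in H_6(X_\sing)$ of the compact projective threefold, and one must justify that $r$ transports top-degree classes compatibly with its degree; this is precisely where the surgery description of the conifold transition (replacing a neighbourhood $S^2\times D^4$ of $E_i$ by a neighbourhood $D^3\times S^3$ of $S_i$) is used, as it shows $r$ carries $[X_\sm]$ to $[X_\sing]$ and is therefore of degree one. Once $g^*$ and $f^*$ are matched, the dual of the first sequence is identified with the second as a diagram, which is the desired duality.
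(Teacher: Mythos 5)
There is a genuine gap, and it sits exactly where the real content of the lemma lies. Your first step computes the transpose $g^*$ of the map $g\colon H^2(X_\res)\to\C^k$, $\alpha\mapsto([E_i]\cdot\alpha)_i$, and correctly finds $g^*(e_i)=\PD(E_i)$; but this is a tautology about the \emph{first} sequence. The map the lemma is actually about is the one in the \emph{second} sequence, namely the composite $u\colon\C^k=H^3(S)\to H^4(X_\sm,S)\cong H^4(X_\sing)\cong H^4(X_\res)$ built from the connecting homomorphism of the pair $(X_\sm,S)$; here $e_i$ is the generator of $H^3(S_i)$, a class living on the vanishing $3$-sphere, with no a priori relation to $E_i$. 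Saying that computing $g^*$ ``yields the explicit claim of the lemma'' silently identifies $u$ with $g^*$, which is precisely what has to be proved. Your second step ($f^*=h$ via degree-one arguments for $\pi$ and $r$) is fine, and combined with the exactness of both sequences it gives $\Image u=\Ker h=\Ker f^*=\Image g^*=\sum_i\C\,\PD(E_i)$; but equality of images does not determine $u$ on individual basis vectors, since $u$ could still permute or rescale the classes $\PD(E_i)$ (note the paper even allows the $[E_i]$ to be linearly dependent, so neither $u$ nor $g^*$ need be injective). Nothing in your argument rules this out.

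The missing ingredient is a local computation identifying the dual of $u$, i.e.\ the boundary map $H_4(X_\res)\cong H_4(X_\sm,S)\xrightarrow{\partial}H_3(S)$, with the intersection pairing against the curves $E_1,\dots,E_k$. This is what the paper's proof does: take a real $4$-cycle $D$ meeting each $E_i$ transversely; under the surgery replacing a neighbourhood $S^2\times D^4$ of $E_i$ by $D^3\times S^3$, each intersection point of $D$ with $E_i$ is replaced by a copy of $S_i$, so $\partial[D]=(E_i\cdot D)_{i=1}^k$. You invoke the surgery description only to show that $r$ has degree one, which supports your $f^*=h$ step but is not the crux; you need to use it again, locally along each $E_i$, to pin down $u(e_i)=\PD(E_i)$.
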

\begin{proof}
The dual of the map $\C^k = H^3(S) \to H^4(X_\sm,S) \cong
H^4(X_\sing) \cong H^4(X_\res)$ is identified with
the following boundary map:
\begin{equation}
\label{eq:boundary_map}
H_4(X_\res) \cong H_4(X_\sing) \cong
H_4(X_\sm,S) \xrightarrow{\partial} H_3(S) = \C^k
\end{equation}
It suffices to show that this is given by the intersection
numbers with the exceptional curves $E_1,\dots,E_k$.
Take a real 4-cycle $D \subset X_\res$ and suppose that
$D$ intersects every $E_i$ transversely. Under the
conifold transition, each intersection point of $D$ and $E_i$ is
replaced with the 3-sphere $S_i$. Therefore the image of $[D]$
under \eqref{eq:boundary_map} is given by
$(E_i \cdot D)_{i=1}^k$. The lemma follows.
\end{proof}

Note that the map $\pi^*\colon H^*(X_\sing) \to H^*(X_\res)$
is injective and $r^* \colon H^*(X_\sing) \to H^*(X_\sm)$ is
surjective.
The exact sequences \eqref{eq:pistar_rstar}, \eqref{eq:dual_sequences}
and the above lemma imply the following description of $H^*(X_\sm)$
as a subquotient of $H^*(X_\res)$.

\begin{proposition}
\label{prop:classical_transition}
Consider the filtration $0 \subset W \subset V \subset H^*(X_\res)$
defined by
\[
W := \sum_{i=1}^k \C [E_i] \subset H^4(X_\res),
\qquad
V := \Image \pi^* \cong H^*(X_\sing).
\]
Then we have $V/W \cong H^*(X_\sm)$. More precisely,
the following holds:
\begin{enumerate}
\item $W$ is the annihilator of $V$ with respect to the
Poincar\'e pairing, i.e.~$W=V^\perp$. In particular,
$V/W$ has a non-degenerate pairing.
\item The map $r^*$ induces an isomorphism $V/W \cong H^*(X_\sm)$
which preserves the pairing and the cup product.
\end{enumerate}
\end{proposition}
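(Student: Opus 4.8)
The plan is to exploit the graded structure of the Poincaré pairing, which pairs $H^j(X_\res)$ with $H^{6-j}(X_\res)$, and to study the single map
\[
\Phi := r^* \circ (\pi^*)^{-1} \colon V \longrightarrow H^*(X_\sm).
\]
This is well defined because $\pi^*$ is injective, so $(\pi^*)^{-1}\colon V \xrightarrow{\sim} H^*(X_\sing)$ exists, and it is surjective because $r^*$ is. Since $\pi^*$ is a ring homomorphism, $V = \Image\pi^*$ is a subring and $(\pi^*)^{-1}$ is a ring isomorphism; composing with the ring homomorphism $r^*$ makes $\Phi$ a ring homomorphism. The whole proposition will follow once I establish (1) $V^\perp = W$ and (2) that $\Phi$ preserves the Poincaré pairing, for then the identification $\Ker\Phi = W$ becomes purely formal.

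For part (1) I would compute $V^\perp$ degree by degree. Because $\pi^*$ is an isomorphism in degrees $0,4,6$ by \eqref{eq:pistar_rstar}, we have $V\cap H^0 = H^0$, $V\cap H^4 = H^4$ and $V\cap H^6 = H^6$, so the annihilators in the complementary degrees $6,2,0$ vanish and $V^\perp$ is concentrated in degree $4$. Its degree-$4$ part is the annihilator of $V\cap H^2 = \pi^*H^2(X_\sing)$, which under $r^*\colon H^2(X_\sing)\cong H^2(X_\sm)$ coincides with the embedded image of $H^2(X_\sm)\hookrightarrow H^2(X_\res)$ in the first line of \eqref{eq:dual_sequences}. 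By Lemma \ref{lem:duality} the dual of that line is $\C^k \to H^4(X_\res)\to H^4(X_\sm)\to 0$ with $e_i\mapsto[E_i]$, so the annihilator of $V\cap H^2$ is the image of $\C^k\to H^4(X_\res)$, namely $W=\sum_i\C[E_i]$. Hence $V^\perp = W$; and since $W\subset H^4(X_\res)=V\cap H^4\subset V$, the radical of the restricted pairing on $V$ is $V\cap V^\perp = W$, so the induced pairing on $V/W$ is non-degenerate.

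For part (2) the crucial point is that $\Phi$ is an isometry. Both $\pi$ and $r$ have degree one (the retraction $r$ is a homeomorphism away from the contracted spheres, whence $r_*[X_\sm]=[X_\sing]$, and $\pi_*[X_\res]=[X_\sing]$), so the projection formula gives $\int_{X_\sm}r^*c = \int_{X_\sing}c = \int_{X_\res}\pi^*c$ for top-degree classes $c$ on $X_\sing$. Applying this to $c=a_1\cup a_2$ with $a_i=(\pi^*)^{-1}v_i$ and using that $\pi^*,r^*$ respect cup products yields $(\Phi v_1,\Phi v_2)_{X_\sm} = (v_1,v_2)_{X_\res}$. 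Now $\Ker\Phi$ is identified with $W$ formally: if $v\in\Ker\Phi$ then $(v,v')=(\Phi v,\Phi v')=0$ for all $v'\in V$, so $v\in V\cap V^\perp=W$; conversely any $v\in W=V^\perp$ pairs trivially with all of $V$, hence $\Phi v$ pairs trivially with $\Image\Phi = H^*(X_\sm)$ and is therefore $0$ by non-degeneracy. Thus $\Phi$ descends to a pairing-preserving ring isomorphism $V/W\cong H^*(X_\sm)$, and since $\Phi\circ\pi^*=r^*$ this is exactly the isomorphism induced by $r^*$, as claimed.

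I expect the main obstacle to lie in the two places where external input is genuinely used. First, the bookkeeping in invoking Lemma \ref{lem:duality}: one must check that the annihilator of the degree-$2$ part of $V$ is \emph{precisely} $W$, which requires matching the two embeddings of a subspace into $H^2(X_\res)$ coming from $X_\sing$ and from $X_\sm$, rather than landing in some larger space. Second, the pairing-preservation of the retraction $r$, which is only continuous and not a morphism, so that the algebraic projection formula is unavailable and one must argue topologically from $r_*[X_\sm]=[X_\sing]$. Once these are in place, $\Ker\Phi=W$ is automatic, and no separate dimension count or verification that $W$ is an ideal is needed.
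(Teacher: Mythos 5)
Your argument is correct, and it follows essentially the route the paper intends: the paper gives no separate proof of Proposition \ref{prop:classical_transition} beyond asserting that it follows from the exact sequences \eqref{eq:pistar_rstar}, \eqref{eq:dual_sequences} and Lemma \ref{lem:duality}, and your degree-by-degree computation of $V^\perp$ plus the isometry of $r^*\circ(\pi^*)^{-1}$ via $\pi_*[X_\res]=[X_\sing]=r_*[X_\sm]$ is exactly the elaboration those references call for.
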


\begin{remark}
It is a subtle problem if $X_\sing$ admits
a smoothing or if the small resolution $X_\res$ is projective.
In the Calabi-Yau case, there is a criterion
due to Friedman, Kawamata and Tian
\cite{Friedman:simultaneous, Kawamata:unobstructed,
Tian:smoothing} about the smoothability of $X_\sing$:
$X_\sing$ is smoothable if and only if
there exist non-zero rational numbers
$\alpha_1,\dots,\alpha_k \in \Q^\times$ such
that $\sum_{i=1}^k \alpha_i [E_i] = 0$ in $H_2(X_\res)$.
In the Fano case, $X_\sing$ is always smoothable 
\cite{Friedman:simultaneous, Namikawa:smoothing_Fano}.
\end{remark}

\subsection{A theorem of Li and Ruan}
We write $\corr{\cdots}^{\res}_{g,n,d}$ for
Gromov-Witten invariants for $X_\res$ and
$\corr{\cdots}^\sm_{g,n,\beta}$ for
Gromov-Witten invariants for $X_\sm$.
Li-Ruan \cite{Li-Ruan} showed the following theorem:
\begin{theorem}
[{\cite[Theorem B]{Li-Ruan}}]
\label{thm:Li-Ruan}
Let $v_1,\dots,v_n$ be elements of $H^*(X_\sing)$ and
let $0\neq \beta \in H_2(X_\sm,\Z)$ be a non-zero degree.
We have:
\[
\sum_{ d: \pi_*(d) = \beta }
\corr{\pi^*(v_1),\dots, \pi^*(v_n)}_{g,n,d}^\res
 =  \corr{r^*(v_1),...,r^*(v_n)}_{g,n,\beta}^\sm
\]
The sum in the left-hand side is finite, i.e.~$\corr{\pi^*(v_1),...,\pi^*(v_n)}_{g,n,d}$
with $\pi_*(d) = \beta$ with a fixed $\beta$ vanishes except
for finitely many $d$.
\end{theorem}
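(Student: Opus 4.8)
The plan is to realize the conifold transition as a symplectic surgery and to compute the two sides through the degeneration (symplectic sum) formula, so that each absolute invariant is expressed in terms of relative invariants of a single common piece glued to a purely local model. First I would fix the common open region $U = X_\sing\setminus P \cong X_\res\setminus E \cong X_\sm\setminus S$, where the three spaces agree. Since each $v_i\in H^*(X_\sing)$ and $H^i(X_\sing,P)\cong H^i(X_\sing)$ for $i\ge 2$ (as recorded in \S\ref{subsec:geometry_conifold}), I can represent $v_1,\dots,v_n$ by cocycles supported in $U$, away from the nodes $P$; the case of unit insertions $v_i=1$ reduces to $0=0$ on both sides by the fundamental-class axiom since $\beta\neq 0$. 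With these representatives, $\pi^* v_i$ and $r^* v_i$ are carried by the \emph{same} cycles inside $U$, away from the exceptional curves $E_j$ and the vanishing spheres $S_j$.

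Next I would degenerate each side along the link of the ordinary double points. Performing a symplectic cut near each $p_j$ splits $X_\res$ into a compactification $\overline U$ glued to copies of the local resolved conifold $W_\res = \cO(-1)\oplus\cO(-1)\to\PP^1$, and splits $X_\sm$ into the \emph{same} $\overline U$ glued to copies of the smoothed conifold $W_\sm\cong T^*S^3$. The degeneration formula then writes each invariant as a sum, over degree splittings, distributions of marked points, and gluing data along the common neck, of products of relative invariants of $(\overline U,\partial)$ with relative invariants of the local models. Because every insertion lives in $U$, all marked points go to the $\overline U$ factor; hence the two computations share exactly the same relative invariants of $(\overline U,\partial)$ and can differ only through the local contribution.

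The heart of the matter, and the step I expect to be the main obstacle, is the comparison of the two local models. On the smoothing side $W_\sm\cong T^*S^3$ carries no nonconstant compact holomorphic curve, so only the trivial local contribution survives and the sum collapses to the $(\overline U,\partial)$-invariant evaluated against the boundary data prescribed by $r^*$. On the resolution side the exceptional $\PP^1$'s do support curves, and the sum $\sum_{d:\pi_* d=\beta}$ is precisely the sum over all ways of attaching (multiple covers of, and degenerations along) the $E_j$ to a curve in $\overline U$. The hard part will be to prove that, using that the $E_j$ are rigid $(-1,-1)$-curves with $c_1\cdot E_j=0$ and that the resolution is crepant, the total local contribution of $W_\res$ summed over all exceptional degrees equals the trivial contribution of $W_\sm$; this requires a virtual-class analysis of maps into the non-compact local models and a multiple-cover computation of Aspinwall--Morrison type, repackaged as an identity of relative invariants.

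Finally I would deduce finiteness of the sum. Since $c_1(X_\res)\cdot E_j=0$, attaching exceptional components leaves the virtual dimension unchanged, so finiteness cannot follow from a dimension count; instead it comes out of the same local analysis, which shows that once the boundary configuration is fixed by the $(\overline U,\partial)$-invariant only finitely many exceptional degrees $d$ yield a nonzero relative invariant of $W_\res$ (the energy of the $\overline U$-part being pinned by the insertions and the matching condition). Combining the shared $(\overline U,\partial)$-invariants with the matched local contributions then yields $\sum_{d:\pi_* d=\beta}\corr{\pi^*(v_1),\dots,\pi^*(v_n)}^\res_{g,n,d}=\corr{r^*(v_1),\dots,r^*(v_n)}^\sm_{g,n,\beta}$, as claimed.
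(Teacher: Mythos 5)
This statement is quoted verbatim from Li--Ruan (their Theorem~B); the paper gives no proof of it, so there is no internal argument to compare yours against. Your outline --- representing the insertions away from the nodes, cutting along the links of the ordinary double points, applying the degeneration formula so that both sides share the relative invariants of the common piece, and reducing everything to a comparison of the local resolved and smoothed conifold contributions via an Aspinwall--Morrison type multiple-cover analysis --- is essentially the strategy of Li and Ruan's original proof, and you correctly identify the matching of local contributions (and the resulting finiteness) as the genuinely hard analytic step, which your sketch leaves open rather than carries out.
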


\subsection{Transition of quantum cohomology}
We choose a suitable basis of $H_2(X_\res,\Z)$.
Let $L$ be an ample line bundle over $X_\sing$.
Then the line bundle $\pi^*L$ is nef on $X_\res$, and
for any curve $C \subset X_\res$, we have
$L \cdot C = 0$ if and only if $C$ is one of the
exceptional curves $E_1,\dots,E_k$.
Therefore the face
$F := \{ d \in \overline{\NE}(X_\res) : d \cdot \pi^*L = 0\}$
of the Mori cone $\overline{\NE}(X_\res)$ is spanned by
the classes of $E_1,\dots,E_k$.
We choose an integral basis $d_1,\dots,d_r$ of $H_2(X_\res,\Z)/{\rm torsion}$
such that
\begin{itemize}
\item $d_1,\dots,d_e$ span a cone containing the face $F$,
where\footnote{We have $e\le k$. It is possible that
$[E_1],\dots,[E_k]$ are linearly dependent.} $e= \dim F$;
\item $d_1,\dots,d_r$ span a cone containing
$\overline{\NE}(X_\res)$.
\end{itemize}
Let $q_1,\dots,q_r$ be the Novikov variables corresponding
to the basis $d_1,\dots,d_r$.
For any $d = \sum_{i=1}^r n_i d_i\in H_2(X_\res,\Z)/{\rm torsion}$, we write
$q^d = q_1^{n_1} q_2^{n_2} \cdots q_r^{n_r}$.
By the exact sequence \eqref{eq:dual_sequences}, we have
\[
\begin{CD}
\bigoplus_{i=1}^k \C [E_i]  @>>>
H_2(X_\res) @>{\pi_*}>> H_2(X_\sing) \cong H_2(X_\sm)
@>>> 0.
\end{CD}
\]
Therefore
$\pi_*(d_{e+1}),\dots,\pi_*(d_r)$ form a basis
of $H_2(X_\sing) \cong H_2(X_\sm)$ and
$q_{e+1},\dots,q_r$ can be identified with
Novikov parameters for $X_\sm$.
Notice that, by Li-Ruan's theorem and by the surjectivity of $r^*$,
Gromov-Witten invariants of $X_\sm$ of degree $\beta\in H_2(X_\sm,\Z)$
can be non-zero only when $\beta$ is a linear
combination of $\pi_*(d_{e+1}), \dots ,\pi_*(d_r)$
with \emph{non-negative} coefficients.
Therefore the quantum product of $X_\sm$ is defined
over the ring $\C[\![q_{e+1},\dots,q_r]\!]$.

Before stating the result, we explain the meaning of analytic continuation.
We will consider analytic continuation of the quantum product
of $X_\res$ across the locus
$\Exc:=\{q_1 = q_2 = \cdots = q_e = 1\}\subset \C^r$
where all the quantum variables associated to exceptional classes equal one.
The map $\pi_* \colon H_2(X_\res) \to H_2(X_\sing) \cong H_2(X_\sm)$
induces a ring homomorphism
\[
\lim_{q_\exc \to 1} :=\lim_{(q_1,\dots,q_e) \to (1,\dots,1)} \colon
\C[q_1,\dots,q_r] \longrightarrow \C[q_{e+1},\dots, q_r]
\]
where $q_\exc$ stands for quantum variables associated
to exceptional curves.
However this does not extend to a homomorphism
between the Novikov rings $\C[\![q_1,\dots,q_r]\!]$
and $\C[\![q_{e+1},\dots,q_r]\!]$.
Instead we have a map
\[
\lim_{q_\exc \to 1} \colon
\C[q_1,\dots,q_e][\![q_{e+1},\dots,q_r]\!]
\to \C[\![q_{e+1},\dots,q_r]\!].
\]
Thus, if $v\star w$ is defined over
the ring $\C[q_1,\dots,q_e][\![q_{e+1},\dots,q_r]\!]$,
we have a well-defined limit $\lim_{q_\exc \to 1} v \star w$.



\begin{theorem}
\label{thm:conifold_transition_qc}
The quantum cohomology of $X_\sm$ is a
subquotient of the quantum cohomology of $X_\res$
restricted to the locus $\Exc :=\{q_1 = q_2 = \cdots = q_e = 1\}$
where the Novikov varibles of exceptional curves equal one.
More precisely, we have:
\begin{enumerate}
\item
The small quantum product of $v\in H^*(X_\res)$ is of the form:
\[
(v\star) =
\sum_{i=1}^k (v\cdot [E_i]) \frac{q^{E_i}}{1-q^{E_i}} N_i
+ R(v)
\]
where $R(v) \in \End(H^*(X_\res)) \otimes
\C[q_1,\dots,q_e][\![q_{e+1},\dots, q_r]\!]$ is regular along $\Exc$,
$R(v) |_{q_{e+1} = \cdots = q_r=0}$ is the cup product by $v$,
and $N_i\in \End(H^*(X_\res))$ is a nilpotent
endomorphism defined by $N_i(w) = (w \cdot [E_i]) [E_i]$.
\item The endomorphisms $N_i$ define the filtration
$0\subset W \subset V \subset H^*(X_\res)$ by
\[
V := \bigcap_{i=1}^k \Ker(N_i) \qquad
W := V\cap \sum_{i=1}^k \Image(N_i),
\]
This filtration coincides with the one in Proposition \ref{prop:classical_transition},
i.e.~$W = \sum_{i=1}^k \C[E_i]$ and $V = \Image(\pi^*) \cong H^*(X_\sing)$.
\item
For $v,w\in V$, the limit $\lim_{q_\exc \to 1} v\star w$
exists and lies in $V\otimes \C[\![q_{e+1},\dots,q_r]\!]$.
Moreover, the map $r^* \colon V \to H^*(X_\sm)$ satisfies:
\[
 r^*\left(\lim_{q_\exc \to 1}v\star w\right)  = r^*(v) \star r^*(w).
\]
Therefore, the isomorphism $V/W\cong H^*(X_\sm)$
in Proposition \ref{prop:classical_transition} intertwines the
quantum product of $X_\res$ restricted to $\Exc$ with
the quantum product of $X_\sm$.
\end{enumerate}
\end{theorem}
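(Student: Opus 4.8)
The plan is to compute the small quantum product of $X_\res$ explicitly, isolate the contribution of the exceptional curves, and then take the limit $q_\exc \to 1$. For part (1), I would start from the definition $(v \star w, u) = \sum_\beta \corr{v,w,u}_{0,3,\beta}^\res q^\beta$ and split the sum over $\beta \in \overline{\NE}(X_\res)$ into three pieces: the degree-zero term (which gives the cup product), the terms where $\beta$ is a positive multiple of a single exceptional class $[E_i]$, and the remaining terms. The key input is that the exceptional curves are $(-1,-1)$-rational curves, so each $E_i$ is isolated and rigid with normal bundle $\cO(-1)\oplus\cO(-1)$; the genus-zero three-point invariants $\corr{v,w,u}_{0,3,d[E_i]}^\res$ are computed by the local multiple-cover formula, contributing $(v\cdot[E_i])(w\cdot[E_i])(u\cdot[E_i])$ against the standard multiple-cover coefficient, and these sum to a geometric series $\sum_{d\ge 1}(v\cdot[E_i])(w\cdot[E_i])(u\cdot[E_i])\, q^{d[E_i]}$. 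Since $q^{d[E_i]}=(q^{E_i})^d$, the series equals $(v\cdot[E_i])(w\cdot[E_i])(u\cdot[E_i])\frac{q^{E_i}}{1-q^{E_i}}$. Matching this against the Poincaré pairing identifies the endomorphism $N_i(w)=(w\cdot[E_i])[E_i]$ and yields the stated form, with $R(v)$ collecting the cup product and the remaining (non-exceptional) contributions; that $R(v)$ is regular along $\Exc$ follows because every such $\beta$ involves at least one of $q_{e+1},\dots,q_r$ and the finiteness in Theorem \ref{thm:Li-Ruan} bounds the exceptional-curve powers appearing with any fixed $X_\sm$-degree.

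For part (2), the identification of the filtration is essentially linear algebra. Since $N_i(w)=(w\cdot[E_i])[E_i]$, we have $\Ker(N_i)=[E_i]^\perp$ and $\Image(N_i)=\C[E_i]$, so $V=\bigcap_i[E_i]^\perp$ and $\sum_i\Image(N_i)=\sum_i\C[E_i]$. To match Proposition \ref{prop:classical_transition}, I would observe that $V=\bigcap_i[E_i]^\perp$ is exactly the kernel of the map $H^*(X_\res)\to\C^k$, $\alpha\mapsto([E_i]\cdot\alpha)_i$, appearing in \eqref{eq:dual_sequences}, whose kernel is $\Image\pi^*$ by the exact sequences \eqref{eq:pistar_rstar}. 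Likewise $W=\sum_i\C[E_i]$ is the image of the map $\C^k\to H^4(X_\res)$, which by Lemma \ref{lem:duality} sends $e_i$ to $[E_i]$; and $W\subset V$ because $[E_i]\cdot[E_j]=0$ for all $i,j$ (the exceptional curves are disjoint, and each $[E_i]\in H^4$ pairs trivially with the $H^4$ classes), so indeed $W=V\cap\sum_i\Image(N_i)$.

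For part (3), the existence of the limit follows from part (1): for $v,w\in V=\bigcap_i\Ker(N_i)$, the singular terms $(v\cdot[E_i])\frac{q^{E_i}}{1-q^{E_i}}N_i$ vanish identically since $v\cdot[E_i]=0$, so $v\star w=R(v)w$ is already regular along $\Exc$, and evaluating at $q_\exc=1$ gives a well-defined element of $H^*(X_\res)\otimes\C[\![q_{e+1},\dots,q_r]\!]$; that it lies in $V\otimes\C[\![q_{e+1},\dots,q_r]\!]$ I would check by confirming the quantum product preserves $V$ along $\Exc$ (equivalently, that $r^*$ applied below is well-defined). The final identity $r^*(\lim_{q_\exc\to1}v\star w)=r^*(v)\star r^*(w)$ is where I expect the main work: I would write both sides via the Poincaré pairing against $r^*(u)$ for $u\in V$, reduce the right-hand side to $\sum_\beta\corr{r^*(v),r^*(w),r^*(u)}_{0,3,\beta}^\sm q^\beta$, and apply Theorem \ref{thm:Li-Ruan} to rewrite each coefficient as $\sum_{d:\pi_*(d)=\beta}\corr{\pi^*(\cdot),\pi^*(\cdot),\pi^*(\cdot)}_{0,3,d}^\res$. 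The main obstacle is bookkeeping the substitution of Novikov variables: under $\lim_{q_\exc\to1}$ the fiber of $\pi_*$ over a fixed $\beta$—which differs by exceptional classes—collapses to a single monomial $q^\beta$, so I must verify that summing the $X_\res$-invariants over this fiber (with their exceptional $q$-powers set to one) reproduces exactly the $X_\sm$-invariant of degree $\beta$. This matching of the fiberwise sum with the Li-Ruan summation, together with checking that $v,w\in V$ forces all relevant $X_\res$-contributions to come from classes $\pi^*(\cdot)$ so that Theorem \ref{thm:Li-Ruan} applies, is the crux; the remaining compatibility with the pairing and cup product on $V/W$ then follows from Proposition \ref{prop:classical_transition}.
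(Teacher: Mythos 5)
Your proposal is correct and follows essentially the same route as the paper: decompose the three-point invariants by $\pi_*(d)$, evaluate the $\pi_*(d)=0$ part via the $(-1,-1)$ multiple-cover formula to obtain the $\frac{q^{E_i}}{1-q^{E_i}}N_i$ terms, identify the filtration from the exact sequences \eqref{eq:pistar_rstar}--\eqref{eq:dual_sequences}, and prove the intertwining by pairing against $r^*(u)$ and invoking Theorem \ref{thm:Li-Ruan}. The one point you leave implicit that the paper makes explicit is how the finiteness in Theorem \ref{thm:Li-Ruan} applies when an insertion is \emph{not} pulled back from $X_\sing$: by homogeneity such a class must lie in $H^2(X_\res)$ and is removed by the divisor equation before Li--Ruan is applied, and likewise the membership of $\lim_{q_\exc\to 1}v\star w$ in $V$ is extracted by setting $u=[E_i]$ in the pairing identity and using $r^*([E_i])=0$.
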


In terms of the quantum connection, we can rephrase
the above result as follows.

\begin{corollary}
\label{cor:conifold_transition_qc}
The small quantum connection $\nabla^\res$ of $X_\res$
is of the form:
\[
\nabla^\res = \nabla' + \frac{1}{z}\sum_{i=1}^k N_i
\frac{dq^{E_i}}{1-q^{E_i}}
\]
where $\nabla'$ is a connection regular along $\Exc
=\{q_1= \cdots = q_e = 1\}$.
The restriction of $\nabla'$ to $\Exc$
induces a flat connection on the vector bundle $(V/W)\times \Exc \to \Exc$
which is isomorphic to the small quantum connection $\nabla^\sm$
of $X_\sm$ under the natural isomorphism $r^*\colon V/W \cong H^*(X_\sm)$.
\end{corollary}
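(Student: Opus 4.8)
The plan is to deduce the Corollary from Theorem \ref{thm:conifold_transition_qc} purely by rewriting the quantum product as a connection one-form, so that essentially no new computation is needed. First I would write the quantum connection in one-form notation as $\nabla^\res = d + \frac{1}{z}\sum_{i=1}^r (\phi_i\star)\, d\log q_i$, where $d\log q_i = dq_i/q_i$ is the logarithmic form dual to the vector field $q_i\partial/\partial q_i = \partial_{\phi_i}$. Substituting the expression for $(\phi_i\star)$ from part (1) of Theorem \ref{thm:conifold_transition_qc} and collecting the $N_j$-terms, the key identity is $\sum_{i=1}^r (\phi_i\cdot E_j)\, d\log q_i = dq^{E_j}/q^{E_j}$, which follows from $q^{E_j} = \prod_i q_i^{\phi_i\cdot E_j}$. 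This converts $\sum_j N_j \frac{q^{E_j}}{1-q^{E_j}}\bigl(\sum_i (\phi_i\cdot E_j)\,d\log q_i\bigr)$ into $\sum_{j=1}^k N_j \frac{dq^{E_j}}{1-q^{E_j}}$, and I would define $\nabla' := d + \frac{1}{z}\sum_{i=1}^r R(\phi_i)\, d\log q_i$. Regularity of $\nabla'$ along $\Exc$ is then immediate: the $R(\phi_i)$ are regular along $\Exc$ by part (1), while the forms $d\log q_i$ have poles only along $q_i=0$, away from $\Exc$.

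Next I would restrict to $\Exc = \{q_1=\cdots=q_e=1\}$ and show $\nabla'|_\Exc$ respects the subbundles $W\times\Exc \subset V\times\Exc$. Only the directions $\phi_{e+1},\dots,\phi_r$ are tangent to $\Exc$, and I would first observe that each such $\phi_i$ lies in $V$: the relation $\phi_i\cdot d_j=\delta_{ij}$ gives $\phi_i\cdot d_j=0$ for $j\le e$, hence $\phi_i\cdot E_l=0$ for every exceptional curve $E_l$ (as the $E_l$ lie in the cone spanned by $d_1,\dots,d_e$), so $\phi_i \in \Ker(H^2(X_\res)\to\C^k) = \Image\pi^* = V\cap H^2$ by the exact sequence \eqref{eq:dual_sequences}. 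For $\phi_i\in V$ the singular terms in $(\phi_i\star)$ vanish (because $\phi_i\cdot E_j=0$), so $\phi_i\star v = R(\phi_i)v$ is regular and its restriction to $\Exc$ equals $\lim_{q_\exc\to1}\phi_i\star v$; part (3) then gives $\lim_{q_\exc\to1}\phi_i\star v \in V$ for $v\in V$, and for $w\in W$ the same limit satisfies $r^*(\lim_{q_\exc\to1}\phi_i\star w) = r^*(\phi_i)\star r^*(w)=0$, whence it lies in $\Ker(r^*|_V)=W$. Thus $\nabla'|_\Exc$ preserves both $V$ and $W$ and descends to $(V/W)\times\Exc$.

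Finally I would identify the descended connection with $\nabla^\sm$. Under the isomorphism $r^*\colon V/W\cong H^*(X_\sm)$ and the identification of $q_{e+1},\dots,q_r$ with Novikov variables of $X_\sm$ (so that $\partial_{\phi_i}$ corresponds to $\partial_{\bar\phi_i}$ with $\bar\phi_i = r^*(\pi^*)^{-1}\phi_i$), applying $r^*$ to $\nabla'_{\phi_i}[v] = \partial_{\phi_i}[v] + \frac{1}{z}[\lim_{q_\exc\to1}\phi_i\star v]$ and invoking part (3) yields $\partial_{\bar\phi_i}(r^*v) + \frac{1}{z}(\bar\phi_i\star r^*v) = \nabla^\sm_{\bar\phi_i}(r^*v)$. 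Flatness of $\nabla'|_\Exc$ is then automatic, being isomorphic to the flat quantum connection $\nabla^\sm$. The main obstacle I anticipate is bookkeeping rather than conceptual: one must carefully verify that the two bases of $H^2$ and the two sets of Novikov variables match up under $r^*\circ(\pi^*)^{-1}$, and that restricting to $\Exc$ (taking the limit $q_\exc\to1$) commutes with the algebraic manipulations, so that $R(\phi_i)|_\Exc$ genuinely equals $\lim_{q_\exc\to1}\phi_i\star(-)$ on $V$.
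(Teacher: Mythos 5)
Your proposal is correct and follows exactly the route the paper intends: the paper gives no separate proof of the Corollary, presenting it as an immediate rephrasing of Theorem \ref{thm:conifold_transition_qc} in connection one-form language, and your argument simply carries out that rephrasing (the identity $\sum_i(\phi_i\cdot E_j)\,d\log q_i = dq^{E_j}/q^{E_j}$, regularity of the $R(\phi_i)$-part, preservation of $V$ and $W$ in the directions tangent to $\Exc$, and identification of the quotient connection with $\nabla^\sm$ via part (3)). No gaps; the details you flag as "bookkeeping" are indeed all that is needed.
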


\begin{remark}
The filtration $0\subset W \subset V \subset H^*(X_\res)$
is the weight filtration
associated to the nilpotent endomorphism $\sum_{i=1}^k a_i N_i$
(see e.g.~\cite[A.2]{Hodge_theory:book})
for a generic choice of $a_1,\dots,a_k$.
As we shall see later in \S \ref{sec:Gr(2,5)} for $\Gr(2,5)$, however,
the quantum cohomology of a smoothing does not necessarily
appear as a subquotient associated with the weight
filtration.
\end{remark}

\begin{remark}
The monodromy of the quantum connection
$\nabla^\res$ around the divisor $\{q^{E_i} = 1\}$ is
conjugate to $\exp(2\pi\sqrt{-1} N_i/z)$ and is
unipotent.
\end{remark}

\begin{remark}
The residue of $(v\star)$ along the
divisor $q^{E_i} = 1$ is also computed by
Lee-Lin-Wang \cite[Lemma 3.12]{Lee-Lin-Wang:A+B}.
\end{remark}

\subsection{Proof of Theorem \ref{thm:conifold_transition_qc}}
We set $V = \Image \pi^*$ and $W = \sum_{i=1}^k \C [E_i]$ as in
Proposition \ref{prop:classical_transition}. Since $V\cong H^*(X_\sing)$,
we may regard $r^*$ as a map from $V$ to $H^*(X_\sm)$.
Part (2) of Theorem \ref{thm:conifold_transition_qc} follows from
part (1) of Theorem \ref{thm:conifold_transition_qc}
and the exact sequences \eqref{eq:pistar_rstar}, \eqref{eq:dual_sequences}.
Thus it suffices to prove parts (1) and (3) of Theorem \ref{thm:conifold_transition_qc}.
Part (1) of Theorem \ref{thm:conifold_transition_qc} follows from
the following lemma:

\begin{lemma}
\label{lem:finiteness_pole}
Fix $\beta\in H_2(X_\sm,\Z) \cong H_2(X_\sing,\Z)$ 
and take $v_1,v_2,v_3 \in H^*(X_\res)$.
Consider the sum
\begin{equation}
\label{eq:the_sum}
\sum_{d: \pi_*(d) = \beta} \corr{v_1,v_2,v_3}_{0,3,d}^\res q^d.
\end{equation}
\begin{enumerate}
\item If $\beta \neq 0$, then the sum is finite;
\item If $\beta = 0$, the sum equals:
\[
\int_{X_\res} v_1\cup v_2 \cup v_3 + \sum_{i=1}^k
(v_1 \cdot [E_i]) (v_2\cdot [E_i]) (v_3 \cdot [E_i])
\frac{q^{E_i}}{1-q^{E_i}}.
\]
\end{enumerate}
\end{lemma}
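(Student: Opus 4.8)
The plan is to treat the two cases by different mechanisms: a dimension/divisor-axiom reduction combined with Li--Ruan's finiteness (Theorem \ref{thm:Li-Ruan}) for (1), and the Aspinwall--Morrison multiple cover formula for (2). First I would record two structural facts about the index set $\{d : \pi_*(d) = \beta\}$. By the second exact sequence in \eqref{eq:dual_sequences} this set is the coset $d_0 + \sum_{i=1}^k \Z[E_i]$ of a fixed preimage $d_0$; and since each $E_i$ is a $(-1,-1)$-curve we have $c_1(X_\res)\cdot[E_i] = 0$, so $\int_d c_1(X_\res)$ is constant along the coset. In particular, when $\beta \neq 0$ every $d$ in the coset is non-zero, so the divisor and fundamental class axioms are available for all contributing terms.

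For part (1) I would first reduce the insertions. Since we work with even cohomology and \eqref{eq:pistar_rstar} gives $\pi^*\colon H^i(X_\sing)\cong H^i(X_\res)$ for $i \geq 4$ (and trivially in degree $0$), every class of degree $\neq 2$ is a pullback $\pi^*(a)$. By multilinearity I may therefore assume each $v_j$ lies in $H^0$, in $H^2$, or is such a pullback. If some $v_j \in H^0$ the fundamental class axiom makes every term vanish (as $d \neq 0$), so that case is trivial. Next I would strip every degree-two insertion using the (descendant-free, hence correction-free) divisor axiom $\corr{D,\gamma_2,\dots}_{0,n+1,d} = (D\cdot d)\corr{\gamma_2,\dots}_{0,n,d}$, valid for all $n \geq 0$ since $d \neq 0$. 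This rewrites each term as a product of linear factors $(v_j\cdot d)$ --- polynomial in the coordinates $m_i$ of $d$ --- times a reduced invariant $\corr{\pi^*a_1,\dots,\pi^*a_p}_{0,p,d}$ with $p \leq 3$ and purely pullback insertions. Li--Ruan's theorem (Theorem \ref{thm:Li-Ruan}) asserts that such reduced invariants are non-zero for only finitely many $d$ with $\pi_*(d) = \beta$; multiplying a finitely supported family by a fixed polynomial keeps it finitely supported, which gives (1).

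For part (2) the index set is $\{\sum_i m_i[E_i] : m_i \geq 0\}$. Because the $E_i$ are pairwise disjoint, a connected genus-zero stable map of class $\sum_i m_i[E_i]$ must land in a single $E_i$; hence the only contributing classes are $d = 0$ and $d = m[E_i]$ for a single $i$ and $m \geq 1$. The $d=0$ term is the classical triple product $\int_{X_\res} v_1\cup v_2\cup v_3$. For $d = m[E_i]$ I would first note that if some $v_j$ is not a divisor the term vanishes: a unit insertion is killed by the fundamental class axiom, while a class in $H^4$ restricts to $0$ on the curve $E_i$ through which every evaluation map factors --- matching the factor $(v_j\cdot[E_i]) = 0$ on the right-hand side. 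When all three $v_j$ are divisors, stripping them by the divisor axiom gives $m^3(v_1\cdot[E_i])(v_2\cdot[E_i])(v_3\cdot[E_i])\,\corr{}_{0,0,m[E_i]}$, and the multiple cover formula $\corr{}_{0,0,m[E_i]} = 1/m^3$ for a $(-1,-1)$-curve collapses this to $(v_1\cdot[E_i])(v_2\cdot[E_i])(v_3\cdot[E_i])$, independent of $m$. Summing $\sum_{m\geq1}q^{m[E_i]} = q^{E_i}/(1-q^{E_i})$ and adding over $i$ produces the stated formula.

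The main obstacle is the local input in part (2): establishing $\corr{}_{0,0,m[E_i]} = 1/m^3$, i.e.\ the Aspinwall--Morrison computation that multiple covers of a super-rigid $\cO(-1)\oplus\cO(-1)$ curve contribute $1/m^3$ to the genus-zero invariant. I would treat this as a citation rather than reprove it, checking only that the contribution is genuinely local (the image of any class-$m[E_i]$ stable map lies in $E_i$, which is isolated with the stated normal bundle). The remaining delicate point is bookkeeping in part (1): verifying that the divisor axiom may be iterated down to $p$ as small as $0$ --- legitimate precisely because every $d$ in the coset is non-zero --- so that Li--Ruan's finiteness clause can be invoked in the form actually needed.
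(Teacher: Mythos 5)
Your proposal is correct and follows essentially the same route as the paper: part (1) by stripping divisor insertions with the divisor axiom and invoking Li--Ruan's finiteness (Theorem \ref{thm:Li-Ruan}) for the remaining pulled-back insertions, and part (2) by localizing to the exceptional curves and applying the divisor axiom together with the $1/n^3$ multiple cover formula. The only cosmetic differences are that you strip \emph{all} degree-two insertions rather than only those outside $\Image\pi^*$, and in part (2) you rule out non-divisor insertions by restriction to $E_i$ where the paper uses the dimension axiom.
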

\begin{proof} We may assume that
$v_1,v_2,v_3$ are homogeneous.
Suppose that $\beta \neq 0$.
If $v_1,v_2,v_3 \in V = \Image \pi^*$,
the finiteness of the sum \eqref{eq:the_sum} follows from Theorem \ref{thm:Li-Ruan}.
If $v_1\notin V$, then $v_1 \in H^2(X_\res)$ by homogeneity.
Thus we can use the divisor equation to factor out $v_1$:
\[
\text{equation \eqref{eq:the_sum}}
= \sum_{d: \pi_*(d) = \beta} (v_1\cdot d) \corr{v_2,v_3}_{0,2,d}^\res q^d.
\]
If in addition $v_2,v_3 \in V$, Theorem \ref{thm:Li-Ruan} again shows
the finiteness of the sum. The finiteness in the other cases can be similarly shown
using the divisor equation and Theorem \ref{thm:Li-Ruan}.

Next suppose that $\beta = 0$.
The $d=0$ term in \eqref{eq:the_sum} gives $\int_{X_\res} v_1 \cup v_2 \cup v_3$.
The only curves in $X_\res$
contributing to the sum \eqref{eq:the_sum} are multiples of
the exceptional curve $E_i$.
By the dimension axiom, we have $\deg v_1 + \deg v_2 + \deg v_3
= 6$. If one of $\deg v_1, \deg v_2, \deg v_3$ is zero,
the invariant $\corr{v_1,v_2,v_3}_{0,3,d}$
is zero for $d\neq 0$. Therefore we only need to consider
the case where $v_1,v_2,v_3 \in H^2(X_\res)$.
Since the moduli space $M_{0,0}(X_\res,d)$ with $\pi_*(d) = 0$
consists of multiple covers of some $E_i$, we have
\begin{align*}
\sum_{d\neq 0: \pi_*(d) = 0}
\corr{v_1,v_2,v_3}_{0,3,d}^\res
& = \sum_{d\neq 0: \pi_*(d) =0}
(v_1\cdot d) (v_2 \cdot d) (v_3 \cdot d)
\corr{\,}_{0,0,d}^\res q^d \\
& = \sum_{i=1}^k \sum_{n=1}^\infty
(v_1\cdot nE_i) (v_2 \cdot nE_i) (v_3 \cdot nE_i) \frac{1}{n^3}q^{n E_i} \\
& = \sum_{i=1}^k (v_1 \cdot E_i) (v_2 \cdot E_i)
(v_3 \cdot E_i) \frac{q^{E_i}}{1-q^{E_i}}
\end{align*}
by the multiple cover formula \cite{Manin:generating}
for a $(-1,-1)$-curve (each multiple cover of degree $n$ contributes
$1/n^3$). The lemma is proved.
\end{proof}

Finally we prove part (3) of Theorem \ref{thm:conifold_transition_qc}.
Suppose that $v,w \in V$.
The existence of the limit $\lim_{q_\exc \to 1} v\star w$
follows from Lemma \ref{lem:finiteness_pole}.
We claim that
\begin{equation}
\label{eq:rstar_qhom}
\lim_{q_\exc \to 1} (u,v\star w) = (r^*(u),r^*(v) \star r^*(w))
\end{equation}
for all $u\in V$. The left-hand side equals
\begin{equation}
\label{eq:LHS}
\sum_{d:\pi_*(d) = 0} \corr{u,v,w}_{0,3,d}^\res  +
\sum_{\beta\neq 0} \sum_{d:\pi_*(d) = \beta}
\corr{u,v,w}_{0,3,d}^\res
q^\beta.
\end{equation}
By Lemma \ref{lem:finiteness_pole}, the first term equals:
\[
\int_{X_\res} u \cup v \cup w = \int_{X_\sm}
r^* u \cup r^*v \cup r^* w
\]
since $u\cdot [E_i]=v\cdot [E_i] = w \cdot [E_i] = 0$.
We also used the fact that $r^*$ preserves the pairing
(Proposition \ref{prop:classical_transition}).
By Theorem \ref{thm:Li-Ruan}, the second term of \eqref{eq:LHS}
equals:
\[
\sum_{\beta \neq 0} \corr{r^*(u),r^*(v),r^*(w)}^\sm_{0,3,\beta} q^\beta.
\]
The claim follows.
Setting $u = [E_i]$ in equation \eqref{eq:rstar_qhom}
and using the fact that $r^*([E_i]) = 0$ from Lemma \ref{lem:duality},
we obtain $([E_i], \lim_{q_\exc \to 1} v\star w) = 0$.
This means that $\lim_{q_\exc \to 1} v\star w$ lies in $V$.
Using again the fact that $r^*$ preserves the pairing, we obtain
from equation \eqref{eq:rstar_qhom} that
\[
\left(r^*(u),r^*\left( \lim_{q_\exc \to 1}v\star w\right) \right) =
\left(r^*(u), r^*(v) \star r^*(w)\right).
\]
Since $r^*$ is surjective, this shows that
$r^*(\lim_{q_\exc \to 1} v\star w)  = r^*(v) \star r^*(w)$.
Part (3) of Theorem \ref{thm:conifold_transition_qc} is proved.

\section{Example: $\Fl(1,2,3)$}
\label{sec:flag}
In this section we study a conifold transition of $\Fl(1,2,3)$, the
space of full flags in $\C^3$, confirming the result in
the previous section.
Consider a toric degeneration of $\Fl(1,2,3)$ given by
a family $\{F_t\}_{t\in \C}$ of $(1,1)$-hypersurfaces in
$\PP^2\times \PP^2$:
\[
F_t =\left \{ ([z_1,z_2,z_3], [Z_1,Z_2,Z_3]) \in \PP^2 \times \PP^2
\mid t z_1 Z_1 + z_2 Z_2  + z_3 Z_3 = 0\right \}.
\]
Then $F_t \cong \Fl(1,2,3)$ for $t\neq 0$ and
the central fiber $X_\sing := F_0$ is a singular toric variety
with an ordinary double point.
This admits a small toric crepant resolution $X_\res\to X_\sing$.
We study a relationship between the quantum
cohomology of $\Fl(1,2,3)$ and $X_\res$.

\subsection{Quantum cohomology of $\Fl(1,2,3)$}
\label{subsec:qc_flag}
The quantum cohomology ring of a flag variety is well-known
see e.g.~\cite{Givental-Kim,Ciocan-Fontanine:partialflag}.
Let $L_1,L_2,L_3$ be the line bundles on $\Fl(1,2,3)$
whose fibers at a flag $0\subset L \subset V \subset \C^3$
are given by $L$, $V/L$ and $\C^3/V$ respectively.
The cohomology ring of $\Fl(1,2,3)$ is generated by
the Chern classes $c_i := - c_1(L_i)$, $i=1,2,3$ and:
\[
H^*(\Fl(1,2,3)) \cong \C[c_1,c_2,c_3]/
\langle \sigma_1,\sigma_2,\sigma_3 \rangle
\]
where $\sigma_i$ is the $i$th elementary symmetric polynomial
of $c_1,c_2,c_3$.
A basis of $H^2(\Fl(1,2,3))$ is given by
\[
p_1 :=  c_1 = - c_1(L_1), \qquad p_2 := c_1 + c_2 = c_1(L_3).
\]
These classes span the nef cone of $\Fl(1,2,3)$ and satisfy
the relations $
p_1^2 +p_2^2 - p_1 p_2 =0$, $p_2^3 =p_1^3=0$, $p_1^2 p_2 = p_1 p_2^2$.
The dual basis in $H_2(\Fl(1,2,3))$ is:
\[
\beta_1 = \PD (p_2^2), \qquad \beta_2 = \PD(p_1^2).
\]
These classes span the Mori cone: they are represented
by fibers of the natural maps $\Fl(1,2,3) \to \Fl(2,3)\cong (\PP^2)^\star$ and
$\Fl(1,2,3) \to \Fl(1,3) \cong \PP^2$ respectively.
For an effective class $d = n_1 \beta_1 + n_2 \beta_2 \in H_2(\Fl(1,2,3))$,
we write $q^d = q_1^{n_1} q_2^{n_2}$ with $q_i = q^{\beta_i}$.
Since $c_1(\Fl(1,2,3)) = 2 p_1+ 2p_2$, we have
$\deg q_1 =\deg q_2 = 4$.
Consider the basis of $H^*(\Fl(1,2,3))$ given by
\[
\left\{
1, \quad p_1, \quad p_2, \quad p_1^2=\PD(\beta_2),
\quad p_2^2=\PD(\beta_1), \quad p_1^2p_2 = \PD([{\rm pt}])
\right\}.
\]
In this basis, the quantum multiplication by $p_1$ and $p_2$
are given by the following matrices:
\begin{align*}
p_1\star  =
\left( \begin{array}{cccccc}
0&q_1&0&0&0&q_1q_2 \\
1&0&0&0&0&0 \\
0&0&0&q_1&0&0 \\
0&1&1&0&0&0 \\
0&0&1&0&0& q_1 \\
0&0&0&0&1&0\end{array} \right)
\quad
p_2 \star =
\left( \begin{array}{cccccc}
0&0&q_2&0&0&q_1q_2 \\
0&0&0&0&q_2&0 \\
1&0&0&0&0&0 \\
0&1&0&0&0&q_2 \\
0&1&1&0&0&0 \\
0&0&0&1&0&0 \end{array} \right)
\end{align*}

\subsection{Quantum cohomology of $X_\res$}
\label{subsec:qc_flagtoric}
The singular fiber $X_\sing = F_0$ is a toric variety
and the corresponding fan is given by the following data:
one-dimensional cones are spanned by:
\begin{align*}
& r_1 =(0,0,1), \quad
r_2 = (1,1,-1), \quad
r_3 =(0,1,0), \\
& r_4 =(0,-1,0), \quad
r_5 = (1,0,0), \quad
r_6 = (-1,0,0),
\end{align*}
and the full-dimensional cones are given by:
\begin{gather*}
\langle r_1,r_3,r_6 \rangle,
\quad
\langle r_1,r_4,r_5 \rangle,
\quad
\langle r_1,r_4,r_6 \rangle,
\quad
\langle r_1,r_2,r_3,r_5\rangle, \\
\quad
\langle r_2,r_4,r_5\rangle,
\quad
\langle r_2,r_4,r_6 \rangle,
\quad
\langle r_2,r_3,r_6 \rangle.
\end{gather*}
A small resolution $X_\res$ of $X_\sing$ is given by
dividing the cone $\langle r_1,r_2,r_3,r_5 \rangle$
into the two simplicial cones $\langle r_1, r_3, r_5 \rangle$
and $\langle r_2, r_3,r_5\rangle$.
Let $R_1,\dots,R_6\in H^2(X_\res)$ be the classes
of the prime toric divisors corresponding to the one-dimensional cones
$\langle r_1 \rangle,\dots, \langle r_6 \rangle$.
The cohomology ring of $X_\res$ is generated by
$R_1,\dots,R_6$ with the relations
$R_1 = R_2$, $R_2 + R_3 =R_4$,
$R_2 + R_5 = R_6$,
$R_1R_2 = R_3R_4 = R_5 R_6=0$.
We choose a basis $\{\frp_1,\frp_2,\frp_3\}$ of $H^2(X_\res)$ as
\[
\frp_1 := R_4, \quad \frp_2 := R_6, \quad \frp_3 := R_2.
\]
They span the nef cone of $X_\res$ and satisfy the relations
$\frp_1(\frp_1-\frp_3)=\frp_2(\frp_2-\frp_3) = \frp_3^2=0$.
The dual basis in $H_2(X_\res)$ is given by:
\[
\beta_1 := \PD(\frp_2 \frp_3), \quad
\beta_2 := \PD(\frp_1 \frp_3), \quad
\beta_3 := \PD(R_3 R_5) = \PD(\frp_1\frp_2-\frp_1\frp_3 - \frp_2\frp_3).
\]
They span the Mori cone of $X_\res$.
The class $\beta_3$ is represented by the exceptional curve
in $X_\res$.

We can compute the quantum product of $X_\res$
by using Givental's mirror theorem \cite{Givental:mirrorthm_toric}.
The computation will be illustrated in Appendix \ref{sec:app}
for the example in \S \ref{sec:Gr(2,5)}.
For $d= n_1 \beta_1 + n_2 \beta_2 + n_3 \beta_3
\in H_2(X_\res)$, we write
$q^d = q_1^{n_1} q_2^{n_2} q_3^{n_3}$,
setting $q_i = q^{\beta_i}$.
Since $c_1(X_\res) = 2\frp_1 + 2\frp_2$,
we have $\deg q_1 = \deg q_2 =4$ and $\deg q_3 =0$.
Consider the following basis of $H^*(X_\res)$:
\[
\left\{ 1,\quad \frp_1, \quad \frp_2, \quad \frp_3, \quad
\frp_1\frp_2-\frp_1\frp_3-\frp_2\frp_3, \quad \frp_1\frp_3,
\quad \frp_2 \frp_3, \quad
\frp_1\frp_2\frp_3\right\}.
\]
In this basis, the quantum product by $\frp_1$, $\frp_2$, $\frp_3$ are
represented by the following matrices:
\allowdisplaybreaks[2]
\begin{align*}
\frp_1 \star & =\left( \begin{array}{cccccccc}
0&q_1&0   &0&0                 &0                &0 &q_1q_2q_3 \\
1&0   &0    &0&0                 &0                &0 &0\\
0&0   &0    &0&q_1(1-q_3) &q_1q_3      &0 &0\\
0&0   &0    &0&-q_1(1-q_3)&q_1(1-q_3)&0 &0\\
0&0   &1    &0&0                 &0                &0 &0 \\
0&1   &1    &1&0                 &0                &0 &0 \\
0&0   &1    &0&0                 &0                &0 &q_1 \\
0&0   &0    &0&0                 &0                &1 &0
 \end{array} \right) \\
\frp_2 \star & =\left( \begin{array}{cccccccc}
0&0&q_2&0&0                 &0 &0                &q_1q_2q_3 \\
0&0&0    &0&q_2(1-q_3) &0 &q_2q_3      &0\\
1&0&0    &0&0                 &0 &0                &0\\
0&0&0    &0&-q_2(1-q_3)&0 &q_2(1-q_3)&0 \\
0&1&0    &0&0                 &0 &0                & 0\\
0&1&0    &0&0                 &0 &0                &q_2 \\
0&1&1    &1&0                 &0 &0                &0\\
0&0&0    &0&0                 &1 &0                &0
 \end{array} \right) \\
\frp_3 \star & =\left( \begin{array}{cccccccc}
 0 &0&0&0                          & 0                      & 0          & 0           & q_1q_2q_3  \\
 0 &0&0&0                          & -q_2q_3           & 0          & q_2q_3 &0 \\
 0 &0&0&0                          & -q_1q_3           &q_1q_3 & 0           & 0 \\
 1 &0&0&0                          & q_3(q_1+q_2) &-q_1q_3& -q_2q_3& 0 \\
 0 &0&0&\frac{q_3}{1-q_3}& 0                    & 0          & 0           & 0 \\
 0 &1&0&0                          & 0                      & 0          & 0            & 0 \\
 0&0 &1&0                          & 0                      &  0         &  0          & 0 \\
 0&0 &0&0                          & 1                      & 0          & 0            & 0
 \end{array} \right)
\end{align*}
\subsection{Comparison of quantum cohomology}
We write $X_\sm$ for $\Fl(1,2,3)$.
Recall from \S \ref{subsec:geometry_conifold} that we have natural maps
\[
X_\res\xrightarrow{\pi} X_\sing \xleftarrow{r} X_\sm.
\]
The map
$\pi^* \colon H^*(X_\sing) \to H^*(X_\res)$
is injective and has the image:
\[
\pi^*(H^*(X_\sing) ) =
\langle 1, \ \frp_1,\frp_2,\ \frp_1\frp_2-\frp_1\frp_3-\frp_2\frp_3, \
\frp_1\frp_3,\ \frp_2\frp_3, \ \frp_1\frp_2\frp_2\rangle.
\]
The map $r^* \colon H^*(X_\sing) \to H^*(X_\sm)$
is sujective with kernel:
\[
\pi^*(\Ker(r^*)) = \langle \frp_1\frp_2-\frp_1\frp_3-\frp_2\frp_3 \rangle
= \langle \PD(\beta_3) \rangle.
\]
On the second homology groups, the maps $\pi$, $r$ induce
a map\footnote{Note that $r_*$ on $H_2$ is an isomorphism.}
\[
(r_*)^{-1} \pi_* \colon H_2(X_\res) \to H_2(X_\sm), \quad
\beta_1 \mapsto \beta_1, \ \beta_2\mapsto\beta_2, \ \beta_3 \mapsto 0.
\]
This gives rise to the map $\lim_{q_3 \to 1} \colon
\C[q_1,q_2,q_3] \to \C[q_1,q_2]$ between Novikov rings.
The residue of the quantum multiplication by $\frp_3$ on $H^*(X_\res)$
along $q_3 = 1$ is:
\[
N = \Res_{q_3=1} (\frp_3 \star) =
\begin{pmatrix}
0 & 0 & 0 & 0 & 0 & 0 & 0 & 0 \\
0 & 0 & 0 & 0 & 0 & 0 & 0 & 0 \\
0 & 0 & 0 & 0 & 0 & 0 & 0 & 0 \\
0 & 0 & 0 & 0 & 0 & 0 & 0 & 0 \\
0 & 0 & 0 & -1 & 0 & 0 & 0 & 0 \\
0 & 0 & 0 & 0 & 0 & 0 & 0 & 0 \\
0 & 0 & 0 & 0 & 0 & 0 & 0 & 0 \\
0 & 0 & 0 & 0 & 0 & 0 & 0 & 0 \\
\end{pmatrix}
\]
It is nilpotent and induces the weight filtration on $H^*(X_\res)$:
\begin{equation}
\label{eq:weight_filtr_flag}
0 \subset \Image N \subset \Ker N \subset H^*(X_\res).
\end{equation}
The computation in \S \ref{subsec:qc_flag}, \ref{subsec:qc_flagtoric}
shows the following proposition, which confirms
the general argument in \S \ref{sec:conifold}.

\begin{theorem}
\label{thm:flag}
The weight filtration \eqref{eq:weight_filtr_flag} defined by
the nilpotent operator
$N =\Res_{q_3=1}(\frp_3\star)$ coincides with the filtration
\[
0 \subset \pi^*(\Ker r^*) \subset \Image \pi^* \subset H^*(X_\res).
\]
The quantum multiplication by $\frp_1$, $\frp_2$ on $H^*(X_\res)$
are regular at $q_3 = 1$ and the operators induced by
$\lim_{q_3 \to 1} \frp_1 \star$, $\lim_{q_3 \to 1} \frp_2\star$
on
\[
\Ker N/\Image N \cong H^*(\Fl(1,2,3))
\]
coincide with the quantum multiplication by $p_1$, $p_2$ on $H^*(\Fl(1,2,3))$.
Here note that $\frp_i \in \Image \pi^*$ and
$p_i = r^* (\pi^*)^{-1} \frp_i$ for $i=1,2$.
\end{theorem}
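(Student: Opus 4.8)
The plan is to prove Theorem~\ref{thm:flag} by a direct comparison of the explicit matrices recorded in \S\ref{subsec:qc_flag} and \S\ref{subsec:qc_flagtoric}, using the general structural result of Theorem~\ref{thm:conifold_transition_qc} to organize the bookkeeping. Since the central fiber $X_\sing = F_0$ has a single ordinary double point and $X_\res \to X_\sing$ is the associated small resolution with exceptional $(-1,-1)$-curve $\beta_3$, this degeneration is an instance of the threefold conifold transition. Thus Theorem~\ref{thm:conifold_transition_qc} and Proposition~\ref{prop:classical_transition} already predict the three assertions abstractly; the content of this section is to confirm that the computed matrices exhibit them explicitly.

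First I would read off $\Ker N$ and $\Image N$ directly from the matrix $N = \Res_{q_3=1}(\frp_3\star)$. Its only nonzero entry sends $\frp_3$ to $-(\frp_1\frp_2-\frp_1\frp_3-\frp_2\frp_3) = -\PD(\beta_3)$ and annihilates every other basis vector, so $\Image N = \langle \PD(\beta_3)\rangle$ and $\Ker N$ is the span of all basis vectors other than $\frp_3$. Comparing with the images computed earlier, $\Image N = \langle \frp_1\frp_2-\frp_1\frp_3-\frp_2\frp_3\rangle = \pi^*(\Ker r^*)$ and $\Ker N = \Image \pi^*$, which establishes that the weight filtration \eqref{eq:weight_filtr_flag} coincides with $0 \subset \pi^*(\Ker r^*)\subset \Image \pi^*\subset H^*(X_\res)$. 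Next I would verify regularity by inspection: in the matrices for $\frp_1\star$ and $\frp_2\star$ every entry is polynomial in $q_1,q_2,q_3$ (the factors $(1-q_3)$ occur only in numerators), so both operators extend holomorphically across $\Exc = \{q_3=1\}$, whereas only $\frp_3\star$ carries the pole $\tfrac{q_3}{1-q_3}$, matching Theorem~\ref{thm:conifold_transition_qc}(1) for the single exceptional curve.

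Finally, the heart of the verification is the induced operators on the quotient. I would fix the isomorphism $\Ker N/\Image N \cong H^*(\Fl(1,2,3))$ of Theorem~\ref{thm:conifold_transition_qc}(3) via $r^*\circ(\pi^*)^{-1}$, tracking the basis: the classes $1,\frp_1,\frp_2$ map to $1,p_1,p_2$, while Poincar\'e duality gives $\frp_1\frp_3 = \PD(\beta_2)\mapsto p_1^2$, $\frp_2\frp_3 = \PD(\beta_1)\mapsto p_2^2$, and $\frp_1\frp_2\frp_3\mapsto p_1^2 p_2$, the generator $\PD(\beta_3)$ of $\Image N$ being sent to zero. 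Then, setting $q_3=1$ (so that the terms with a factor $(1-q_3)$ vanish and $q_1q_2q_3\mapsto q_1q_2$) and reducing modulo $\Image N = \langle \PD(\beta_3)\rangle$, I would compare the resulting matrices column by column against $p_1\star$ and $p_2\star$.

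The main obstacle I anticipate is not any single numerical computation but the careful bookkeeping of this basis correspondence together with the check that $\lim_{q_3\to 1}\frp_i\star$ genuinely preserves both $\Image N$ and $\Ker N$, so that the induced map on the quotient is well defined and agrees with $r^*\circ(\pi^*)^{-1}$ on the nose. Well-definedness is guaranteed in advance by Theorem~\ref{thm:conifold_transition_qc}, so the explicit column-by-column match serves the dual purpose of confirming the general theory in this example and pinning down the isomorphism concretely; the only genuine computational input behind the matrices themselves is Givental's mirror theorem, whose use I would defer to the appendix as indicated.
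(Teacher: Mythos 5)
Your proposal is correct and follows essentially the same route as the paper, whose proof of Theorem \ref{thm:flag} consists precisely of the direct comparison of the explicit matrices from \S\ref{subsec:qc_flag} and \S\ref{subsec:qc_flagtoric}, with the general conifold-transition result of \S\ref{sec:conifold} supplying the structural framework. Your reading of $\Ker N$, $\Image N$, the regularity of $\frp_1\star$, $\frp_2\star$ at $q_3=1$, and the basis correspondence $\frp_1\frp_3=\PD(\beta_2)\mapsto p_1^2$, $\frp_2\frp_3=\PD(\beta_1)\mapsto p_2^2$, $\frp_1\frp_2\frp_3\mapsto p_1^2p_2$ all check out against the displayed matrices.
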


\section{Example: $\Gr(2,4)$}
\label{sec:Gr(2,4)}
In this section we study an extremal transition of $\Gr(2,4)$, the space of
complex two planes in $\C^4$.
By the Pl\"{u}cker embedding, $\Gr(2,4)$ can be realized as
a quadric in $\PP^5 = \PP(\wedge^2 \C^4)$.
Consider a toric degeneration of $\Gr(2,4)$
given by a family $\{F_t\}_{t\in \C}$
of quadric hyperplanes in $\PP^5$:
\[
F_t = \{ [Z_{12}, Z_{13}, Z_{14}, Z_{23}, Z_{24}, Z_{34}]
\in \PP^5 \mid
Z_{12}Z_{34} - Z_{13}Z_{24} + tZ_{14}Z_{23} = 0 \}.
\]
Then $F_t \cong \Gr(2,4)$ for $t \neq 0$ and
the central fiber $X_\sing := F_0$ is a singular toric variety
with a transversal $A_1$-singularity along
$(Z_{12}= Z_{34}=Z_{13}=Z_{24} =0) \cong \PP^1$.
This singular variety admits a small toric crepant resolution
$X_\res \rightarrow X_\sing$.
We study a relationship between the quantum cohomology of
$\Gr(2,4)$ and $X_\res$.

\subsection{Quantum cohomology of $\Gr(2,4)$}
\label{subsec:qc_Gr(2,4)}
Let $T^{\star}$ be the dual tautological bundle of
$\Gr(2,4)$.
The cohomology ring of $\Gr(2,4)$ is
generated by the Chern classes $c_1(T^{\star})$
and $c_2(T^{\star})$.
Fix a complete flag $0\subset E_1
\subset E_2 \subset E_3 \subset E_4 = \C^4$ in $\C^4$.
Consider the following cycles:
\begin{align*}
D & =\{V \in \Gr(2,4): \dim(V \cap E_2)=1\} \\
\Delta & = \{V \in \Gr(2,4): V \subset E_3\} \\
C &= \{V \in \Gr(2,4): E_1 \subset V \subset E_3\}
\end{align*}
Their Poincar\'{e} duals are denoted respectively by $d$, $\delta$, $c$.
We know that $d = c_1(T^{\star})$ and $\delta = c_2(T^{\star})$ and
$c = d \delta =d^3/2$.
The cohomology ring is given by
\[
H^*(\Gr(2,4)) \cong \C[d,\delta]/
\langle d^3 - 2d\delta, d^2\delta - \delta^2 \rangle
\]
We choose an additive basis of $H^*(\Gr(2,4))$ as follows:
\[
1, \quad
d,\quad
d^2,\quad
d^2-2\delta,\quad
d^3,\quad
d^4 \quad
\]
Let $q$ be the Novikov variable dual to $d\in H^2(\Gr(2,4))$.
We have $\deg q = 8$.
We use the quantum Schubert calculus
\cite{Bertram:qSchubert,Ciocan-Fontanine:partialflag}
to compute the quantum product of $d$.
Under the above basis, the quantum product matrix of
$d$ is:
\[d \star= \left( \begin{array}{cccccc}
0&0&0&0&2q&0\\
1&0&0&0&0&2q\\
0&1&0&0&0&0\\
0&0&0&0&0&0\\
0&0&1&0&0&0\\
0&0&0&0&1&0\
\end{array} \right)\]
and the quantum product matrix of $\delta$ is:
\[\delta \star= \left( \begin{array}{cccccc}
0&0&q&q&0&0\\
0&0&0&0&2q&0\\
\frac{1}{2}&1&0&0&0&2q\\
-\frac{1}{2}&0&0&0&0&0\\
0&\frac{1}{2}&0&0&0&0\\
0&0&\frac{1}{2}& -\frac{1}{2}&0&0\
\end{array}
\right).
\]

\subsection{Quantum cohomology of $X_\res$}
\label{subsec:qc_Gr(2,4)_toric}
The fan for the singular toric variety $X_\sing$ is as follows:
it is a $4$-dimensional fan whose
1-dimensional cones are spanned by
\begin{align*}
& r_1 = (1,0,0,0), \quad
r_2 = (-1,0,1,0), \quad
r_3 = (0,0,-1,1), \\
&r_4 = (-1,1,0,0), \quad
r_5 = (0,-1,0,1), \quad
r_6 = (0,0,0,-1).
\end{align*}
This is a complete fan whose top dimensional cones are:
\begin{align*}
 & \langle r_1, r_3, r_5, r_6 \rangle,
 \quad
 \langle r_1, r_2, r_4, r_6 \rangle,
 \quad
 \langle r_2, r_3, r_4, r_5, r_6 \rangle, \\
& \langle r_1, r_2, r_5, r_6 \rangle,
 \quad
 \langle r_1, r_3, r_4, r_6 \rangle,
 \quad
 \langle r_1, r_2, r_3, r_4, r_5 \rangle.
\end{align*}
Note that there are two non-simplicial $4$-dimensional cones.
We divide these cones as follows:
\begin{itemize}
\item divide $\langle r_2, r_3, r_4, r_5, r_6 \rangle$
into $\langle r_2, r_3, r_4, r_6 \rangle$ and $\langle r_2, r_3, r_5, r_6 \rangle$.
\item divide $\langle r_1, r_2, r_3, r_4, r_5 \rangle$ into
$\langle r_1, r_2, r_3, r_4 \rangle$ and  $\langle r_1, r_2, r_3, r_5 \rangle$.
\end{itemize}
Then we get a smooth fan.
This fan corresponds to a smooth toric variety
which we denote by $X_\res$.
Let $R_i$ denote the class of the toric divisor
corresponding to the ray $\langle r_i\rangle$.
There are linear relations: $R_1 = R_2+R_4$,
$R_4 = R_5$, $R_2 = R_3$, $R_3 + R_5 = R_6$.
The cohomology ring of $X_\res$ is given by:
\[
H^*(X_\sm) = \C[R_1,R_4]/
\langle R_4^2,R_1^4-2R_1^3R_4\rangle.
\]
We choose a basis $\{m_1, m_2\}$ of $H^2(X_\res)$
as $m_1 = R_1$, $m_2 = R_4$.
They span the nef cone of $X_\res$.
The dual basis in $H_2(X_\res)$ is given by $\beta_1 =\PD(R_1R_2R_4)$
and $\beta_2 = \PD(R_1R_2R_3)$. They span the Mori cone of $X_\res$.
The class $\beta_2$ is represented by an exceptional curve.

We compute the quantum product of $X_\res$ using
Givental's mirror theorem \cite{Givental:mirrorthm_toric},
see Appendix \ref{sec:app} for the method.
For $d= n_1 \beta_1 + n_2 \beta_2 \in H_2(X_\res)$,
we write $q^d = q_1^{n_1} q_2^{n_2}$, where $q_i = q^{\beta_i}$.
We have $\deg q_1 = 8$ and $\deg q_2 = 0$.
We choose the following bases for the cohomology ring of $X_\res$:
\[
\left\{
1, \
m_1, \
m_1-2m_2, \
m_1^2, \
m_1^2-2m_1m_2 , \
m_1^3, \
m_1^3-2m_1^2m_2, \
m_1^4 = 2[\pt]
\right\}.
\]
Under this basis, the quantum product matrices of the divisors
$m_1$ and $m_2$ are as follows:
\[m_1 \star= \left( \begin{array}{cccccccc}
0&0&0&0&0&q_1(1+q_2)&q_1(1-q_2)&0\\
1&0&0&0&0&0&0&q_1(1+q_2)\\
0&0&0&0&0&0&0&-q_1(1-q_2)\\
0&1&0&0&0&0&0&0\\
0&0&1&0&0&0&0&0\\
0&0&0&1&0&0&0&0\\
0&0&0&0&1&0&0&0\\
0&0&0&0&0&1&0&0\
\end{array} \right)
\]
\[m_2 \star= \left( \begin{array}{cccccccc}
0&0&0&0&0&q_1(1-q_2)&2q_1q_2&0\\
0&0&0&0&0&0&0&q_1(1-q_2)\\
1&0&2&0&0&0&0&-q_1(1+q_2)\\
0&0&-1&0&0&0&0&0\\
0&1&\frac{2(1+q_2)}{1-q_2}&0&0&0&0&0\\
0&0&0&0&-1&0&0&0\\
0&0&0&1&\frac{2(1+q_2)}{1-q_2}&0&0&0\\
0&0&0&0&0&0&0&0\
\end{array} \right)\]
\subsection{Comparison of quantum cohomology}
The residue of the quantum product matrix of $m_2$ at $q_2 = 1$ is
\[
N = \Res_{q_2 = 1} (m_2\star) =
\left( \begin{array}{cccccccc}
0&0&0&0&0&0&0&0\\
0&0&0&0&0&0&0&0\\
0&0&0&0&0&0&0&0\\
0&0&0&0&0&0&0&0\\
0&0&4&0&0&0&0&0\\
0&0&0&0&0&0&0&0\\
0&0&0&0&4&0&0&0\\
0&0&0&0&0&0&0&0\
\end{array} \right).
\]
The residue $N$ defines the filtration
$0 \subset W \subset V \subset H^*(X_\res)$ as:
\begin{align}
\label{eq:filtration_Gr(2,4)}
\begin{split}
V & := \Ker N  = \Span\{1,m_1,m_1^2,m_1^3,m_1^4, m_1^3 - 2 m_1^2m_2\}
\\
W & := \Ker N \cap \Image N = \C (m_1^3 - 2 m_1^2 m_2).
\end{split}
\end{align}
This filtration arises from the correspondence
$X_\res \to X_\sing \leftarrow X_\sm:=\Gr(2,4)$ as follows:
\begin{proposition}
Let $\pi \colon X_\res \to X_\sing$
and $r\colon X_\sm = \Gr(2,4) \to X_\sing$ be natural
maps associated to the resolution and the smoothing.
\begin{enumerate}
\item
The singular cohomology group of $X_\sing$ is given by the table:
\[
\begin{tabular}{c|ccccccccc}
{\rm degree} $p$ & $0$ & $1$ &$2$ &$3$ &$4$ &$5$ &$6$ &$7$& $8$ \\
\hline
\parbox[c][15pt][c]{0pt}{}
$H^p(X_\sing)$ & $\C$ & $0$ & $\C$ & $0$ & $\C$ & $0$ & $\C^2$ &  $0$
& $\C$
\end{tabular}
\]
\item
The map $\pi^* \colon H^*(X_\sing) \to H^*(X_\res)$
is injective and $\Image \pi^* = V$.
\item
The map $r^* \colon H^*(X_\sing) \to H^*(X_\sm)$ is
neither surjective nor injective; we have
$\pi^*(\Ker r^*) = W$ and $\Image r^* = \Span\{1,d,d^2,d^3,d^4\}$.
\item
The map $r^* \circ (\pi^*)^{-1} \colon V \to H^*(X_\sm)$
sends $m_1^i$ to $d^i$ for $0\le i\le 4$
and $m_1^3 - 2 m_1^2 m_2$ to zero.
\end{enumerate}
\end{proposition}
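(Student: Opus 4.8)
The plan is to parallel the method of \S\ref{sec:conifold}: set up relative cohomology exact sequences for a triple of pairs and read off $\pi^*$, $r^*$ from them, then match the answers against the filtration $V=\Ker N$, $W=\Ker N\cap\Image N$ determined by the residue matrix $N$. Geometrically, $X_\sing=\{Z_{12}Z_{34}-Z_{13}Z_{24}=0\}\subset\PP^5$ is a rank-$4$ quadric, i.e.\ the projective cone over the smooth quadric surface $\PP^1\times\PP^1\subset\PP^3$ with vertex the singular line $C=\{Z_{12}=Z_{34}=Z_{13}=Z_{24}=0\}\cong\PP^1$. The resolution $\pi\colon X_\res\to X_\sing$ is small, with exceptional locus $\Sigma:=\pi^{-1}(C)$ a $\PP^1$-bundle over $C$, while the retraction collapses a vanishing-cycle locus $S\subset X_\sm$ that fibers over $C\cong S^2$ with fiber the vanishing $S^3$ of each transverse node. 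Excision gives $H^*(X_\res,\Sigma)\cong H^*(X_\sing,C)\cong H^*(X_\sm,S)$, so the three pairs $(X_\res,\Sigma)$, $(X_\sing,C)$, $(X_\sm,S)$ fit into the same kind of diagram as in \S\ref{subsec:geometry_conifold}.

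For parts (1) and (2) I would use the sequences of $(X_\res,\Sigma)$ and $(X_\sing,C)$. From the computed ring $H^*(X_\res)$ (graded dimensions $(1,2,2,2,1)$ in degrees $0,2,4,6,8$), the known $H^*(\Sigma)=H^*(\PP^1)\otimes H^*(\PP^1)$ and $H^*(C)=H^*(\PP^1)$, and the restriction map $H^*(X_\res)\to H^*(\Sigma)$ computed by toric intersection theory, one determines $H^*(X_\res,\Sigma)$ and hence the Betti numbers of $X_\sing$ in the table; the same numbers follow from the quadric-cone structure as a cross-check. In degrees $6$ and $8$ the groups $H^*(\Sigma)$ vanish, so $\pi^*$ is an isomorphism there (trivially so in degree $0$); in degrees $2$ and $4$ the image is the line spanned by $m_1=\pi^*H$ and by $m_1^2$, where $H$ is the hyperplane class and $\pi^*H=m_1$ since $m_1\cdot\beta_2=0$ while $m_2\cdot\beta_2=1$. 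Thus $\pi^*$ is injective with image $\Span\{1,m_1,m_1^2,m_1^3,m_1^2m_2,m_1^4\}$, which coincides graded-piece by graded-piece with $V=\Ker N$, proving parts (1) and (2).

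For part (3) I would run the analogous comparison between the sequences of $(X_\sing,C)$ and $(X_\sm,S)$. The hyperplane class of $X_\sing$ pulls back to $d$, so $\Image r^*$ contains the $5$-dimensional subring $\langle1,d,d^2,d^3,d^4\rangle$. The essential difference from the conifold case is that $S$ is now an $S^3$-bundle over $S^2$ rather than a disjoint union of $3$-spheres, so $H^*(S)$ carries extra classes (in degrees $2$ and $5$ besides the fiber class in degree $3$); tracking these through the sequences is what prevents $r^*$ from being surjective. This shows that $\Image r^*=\Span\{1,d,d^2,d^3,d^4\}$ is a \emph{proper} subring of $QH^*(\Gr(2,4))$, missing the second ruling class $d^2-2\delta$, that $r^*$ is surjective in degree $6$ (as $d^3\neq0$) and hence has a one-dimensional kernel concentrated in degree $6$, and that $\pi^*(\Ker r^*)=W$.

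The main obstacle is this last identification: pinning down which line in $H^6(X_\sing)\cong H^6(X_\res)=\Span\{m_1^3,m_1^2m_2\}$ is annihilated by $r^*$, i.e.\ proving it is exactly $W=\C(m_1^3-2m_1^2m_2)$ and not some other combination of $m_1^3$ and $m_1^2m_2$. This requires understanding the specialization (vanishing-cycle) map of the family $\{F_t\}$ concretely near $C$: the class carried by the collapsing $S^3$'s is the one that dies under $r^*$, and one must verify that its $(\pi^*)^{-1}$-image is $m_1^3-2m_1^2m_2$; here the explicit $N$, for which $W=\Ker N\cap\Image N$ is already computed, supplies the bridge. Granting this, part (4) is immediate: $r^*\circ(\pi^*)^{-1}$ sends $m_1\mapsto d$ (both are hyperplane classes), hence $m_1^i\mapsto d^i$ by multiplicativity, and sends the generator $m_1^3-2m_1^2m_2$ of $W=\pi^*(\Ker r^*)$ to $0$.
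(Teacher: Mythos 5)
Your overall architecture (relative/Mayer--Vietoris sequences for the three pairs, the quadric-cone picture of $X_\sing$, pullback of the hyperplane class giving $m_1$ and $d$) is consistent with the paper, which computes $H^*(X_\sing)$ by Mayer--Vietoris using that the smooth locus is the total space of $\cO(1,1)^{\oplus 2}$ over $\PP^1\times\PP^1$, and proves injectivity of $\pi^*$ via the Leray spectral sequence for $R\pi_*\underline{\C}$. Parts (1) and (2) of your argument are essentially sound (note that for $\Image\pi^*=V$ one also needs that $\pi^*$ hits all of the two-dimensional $H^6(X_\res)$, which follows from injectivity plus $\dim H^6(X_\sing)=2$; your span $\{m_1^3,m_1^2m_2\}$ equals $\{m_1^3,\,m_1^3-2m_1^2m_2\}$, so this is fine).

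The genuine gap is exactly where you flag it: identifying $\Ker r^*\subset H^6(X_\sing)$ with the line whose $\pi^*$-image is $\C(m_1^3-2m_1^2m_2)$. You propose to "grant" this after appealing to the vanishing-cycle map and to the already-computed $W=\Ker N\cap\Image N$ as "the bridge" --- but that is circular: the content of the proposition is precisely that the topologically defined subspace $\pi^*(\Ker r^*)$ \emph{coincides} with the quantum-cohomological $W$, so you cannot use $W$ to locate $\Ker r^*$. The paper closes this step with a short ring-theoretic argument that avoids any analysis of the specialization map: if $x\in H^6(X_\sing)$ generates $\Ker r^*$, then since $r^*$ is a ring homomorphism and $r^*\colon H^8(X_\sing)\to H^8(X_\sm)$ is an isomorphism, $r^*(\alpha\cup x)=r^*(\alpha)\cup r^*(x)=0$ forces $\alpha\cup x=0$ in $H^8(X_\sing)$, hence $m_1\cup\pi^*(x)=0$ in $H^8(X_\res)$; the annihilator of $m_1$ in $H^6(X_\res)$ is spanned by $m_1^3-2m_1^2m_2$ (using $m_1^4=2m_1^3m_2$ and $m_2^2=0$), so $\pi^*(x)$ is a multiple of it. You should replace your appeal to the vanishing-cycle geometry by this (or an equivalent) argument; once that is done, your deduction of part (4) by multiplicativity is correct.
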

\begin{proof}
Note that the non-singular locus $Y$ of $X_\sing$ is isomorphic
to the total space
of $\cO(1,1)^{\oplus 2}$ over $\PP^1\times \PP^1$.
We consider the Mayer-Vietoris exact sequence associated
to $Y$ and a neighbourhood $\nu$ of the singular locus $\PP^1$.
The intersection $\nu \cap Y$ is homotopic to the $3$-sphere
bundle associated to $\cO(1,1)^{\oplus 2} \to \PP^1\times \PP^1$
and the cohomology of $\nu\cap Y$ can be easily computed by
the Gysin sequence: we have
\[
H^*(N\cap Y) = \C,\ 0,\ \C^2,\ 0,\ 0,\ \C^2, \ 0,\ \C \quad
\text{for $*=0,1,2,3,4,5,6,7$.}
\]
Then the Mayer-Vietoris sequence gives the result for $H^*(X_\sing)$.
To prove the statement about $\pi^*$,
we consider the hypercohomology spectral
sequence for $\mathbb{H}^*(X_\sing, \R\pi_*\underline{\C})
= H^*(X_\res)$.
Since we have
\[
R^j \pi_*\underline{\C} = \begin{cases}
\underline{\C}  & j = 0; \\
\iota_*\underline{\C}_{\PP^1} & j=2; \\
0 & \text{otherwise},
\end{cases}
\]
where $\iota\colon \PP^1 \to X_\sing$ is the
inclusion of the singular locus,
the spectral sequence degenerates at the
$E_2$ term $H^j(R^i\pi_*\underline{\C})$;
this shows that $\pi^*$ is injective.
Since the image of $\pi^*$ contains the pull-back $m_1$
of the ample class $\alpha := c_1(\cO(1))$ on $X_\sing$,
it follows that $\Image \pi^* = V$.
On the other hand, $r^*$ also sends the ample class $\alpha$
to $d = c_1(\cO(1))\in H^2(X_\sm)$ and it follows
that $\Image r^* = \Span \{1,d,d^2,d^3,d^4\}$.
Let $x \in H^6(X_\sing)$ be a generator of
the kernel of $r^*$.
Then we have $\alpha \cup x =0$ in $H^8(X_\sing)$
(as otherwise we have
$0 \neq r^*(\alpha \cup x) = r^*(\alpha) \cup r^*(x)
= 0$). Therefore $0 = \pi^*(\alpha \cup x) = m_1 \cup \pi^*(x)$.
This shows that $\pi^*(x)$ is a multiple of $m_1^3 - 2 m_1^2 m_2$.
\end{proof}

The computation in \S \ref{subsec:qc_Gr(2,4)}--\ref{subsec:qc_Gr(2,4)_toric}
implies the following theorem:

\begin{theorem}
\label{thm:Gr(2,4)}
The filtration $0 \subset W \subset V \subset H^*(X_\res)$
\eqref{eq:filtration_Gr(2,4)} defined
by the residue $N = \Res_{q_2 = 1} (m_2\star)$ along $q_2 = 1$
matches with the filtration
\[
0\subset \pi^*(\Ker r^*) \subset \Image \pi^* \subset
H^*(X_\res).
\]
The quantum products of elements in $\Image \pi^*$
are regular at $q_2 = 1$ and the map
\[
r^* \circ (\pi^*)^{-1} \colon \Image \pi^* \to H^*(\Gr(2,4))
\]
intertwines the quantum product $\star|_{q_2=1}$ on $\Image \pi^* = V$
with the quantum product on $H^*(\Gr(2,4))$ under
the identification $q_1 = q$ of the Novikov variables.
This map also preserves the Poincar\'e pairing.
\end{theorem}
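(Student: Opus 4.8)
The plan is to verify all four assertions by direct linear algebra on the explicit quantum multiplication matrices computed in \S\ref{subsec:qc_Gr(2,4)}--\ref{subsec:qc_Gr(2,4)_toric}. It should be stressed at the outset that the general mechanism of Theorem \ref{thm:conifold_transition_qc} is \emph{not} available here: the transition of $\Gr(2,4)$ is a transversal $A_1$-singularity along $\PP^1$ rather than a collection of isolated ordinary double points, so neither Lemma \ref{lem:finiteness_pole} nor Proposition \ref{prop:classical_transition} applies verbatim, and (as the preceding Proposition shows) $r^*$ is not surjective with $W\subsetneq V^\perp$. I would therefore argue computationally. The filtration claim comes first. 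The matrix $(m_2\star)$ has poles at $q_2=1$ only through its two entries equal to $\frac{2(1+q_2)}{1-q_2}=\frac{4}{1-q_2}-2$; taking residues, $N=\Res_{q_2=1}(m_2\star)$ is the nilpotent operator with $N(m_1-2m_2)=4(m_1^2-2m_1m_2)$, $N(m_1^2-2m_1m_2)=4(m_1^3-2m_1^2m_2)$, and all other basis vectors sent to zero. A direct computation then gives $\Ker N=\Span\{1,m_1,m_1^2,m_1^3,m_1^4,\,m_1^3-2m_1^2m_2\}=V$ and $\Ker N\cap\Image N=\C(m_1^3-2m_1^2m_2)=W$, exactly as in \eqref{eq:filtration_Gr(2,4)}, which the preceding Proposition identifies with $\Image\pi^*$ and $\pi^*(\Ker r^*)$; hence the two filtrations coincide.

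Next I would establish regularity at $q_2=1$ together with the descent of the product to $V/W$. The operator $(m_1\star)$ is visibly pole-free, and the decomposition $(m_2\star)=\tfrac{1}{1-q_2}N+R$ (with $R$ regular) isolates the poles in the two columns of $(m_2\star)$ indexed by $m_1-2m_2$ and $m_1^2-2m_1m_2$, precisely the classes \emph{not} lying in $V=\Image\pi^*$. Since $V/W$ is generated as a quantum algebra by $m_1$, it suffices to control products involving the generator $\xi:=m_1^3-2m_1^2m_2$ of $W$. Using $\xi=m_2\star m_1^2$ and associativity, one computes $\xi\star m_1=q_1(1-q_2)$, $\xi\star m_1^2=q_1(1-q_2)\,m_1$, and $\xi\star\xi=q_1(1-q_2)(m_1^2-2m_1m_2)$, each of which is regular at $q_2=1$ and vanishes there. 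Thus $\xi\star V|_{q_2=1}\subset W$, so $W$ is an ideal at $q_2=1$, the quantum product of two elements of $V$ is regular at $q_2=1$, and the restricted product descends to $V/W$.

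For the intertwining I would exploit that $V/W$ is cyclic: it is generated over the Novikov ring by the image of $m_1$, with basis the images of $1,m_1,\dots,m_1^4$, and $r^*\circ(\pi^*)^{-1}$ carries $m_1^i$ to $d^i$ and identifies $q_1=q$. It therefore suffices to match the single operator $(m_1\star)|_{q_2=1}$ on $V$ with $(d\star)$ on $\Image r^*=\Span\{1,d,\dots,d^4\}\subset QH^*(\Gr(2,4))$. One checks that $(m_1\star)|_{q_2=1}$ preserves $V$ and reproduces the Grassmannian relations: for instance $m_1\star m_1^3|_{q_2=1}=m_1^4+2q_1\mapsto d^4+2q=d\star d^3$ and $m_1\star m_1^4|_{q_2=1}=2q_1\,m_1\mapsto 2qd=d\star d^4$, the remaining structure constants matching in the same way. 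For the Poincar\'e pairing I would first verify $W\subset V^\perp$ (so that it descends to $V/W$) and then compare $(m_1^i,m_1^j)_{X_\res}$ with $(d^i,d^j)_{\Gr(2,4)}$ degree by degree, the only nontrivial normalization being $\int_{X_\res}m_1^4=2=\deg\Gr(2,4)=\int_{\Gr(2,4)}d^4$.

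The main obstacle is the regularity step. Because $W\subsetneq V^\perp$ and $r^*$ is not surjective, I cannot invoke the clean intersection-theoretic argument of \S\ref{sec:conifold} (where $v\in\Image\pi^*$ forces $v\cdot[E_i]=0$ and hence annihilates the residue of $(v\star)$). Instead the cancellation of the poles along the two \emph{vanishing} directions $m_1-2m_2$ and $m_1^2-2m_1m_2$ has to be extracted from the explicit associativity computations above. This is exactly where the \emph{proper subring} phenomenon $\Image r^*=\C[d]\subsetneq QH^*(\Gr(2,4))$ becomes visible, the class $\delta$ never appearing in the image, and it is what distinguishes this case from a genuine conifold transition.
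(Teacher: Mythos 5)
Your proposal is correct and follows essentially the same route as the paper: the paper gives no separate argument for Theorem \ref{thm:Gr(2,4)} beyond the remark that it is implied by the explicit matrices of \S\ref{subsec:qc_Gr(2,4)}--\ref{subsec:qc_Gr(2,4)_toric}, which is exactly the direct verification you carry out. Your organisation of that verification (isolating the poles of $(m_2\star)$ in the columns indexed by $m_1-2m_2$ and $m_1^2-2m_1m_2$, using associativity to control products with $m_1^3-2m_1^2m_2$, and reducing the intertwining to the single cyclic generator $m_1$) is a legitimate, more explicit write-up of the same computation, and your observation that the general mechanism of \S\ref{sec:conifold} is not available here is consistent with the paper's treatment.
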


\begin{remark}
Since $N$ is self-adjoing with respect to the Poincar\'e pairing,
we have $(\Ker N)^\perp= \Image N$.
Thus the Poincar\'e pairing induces a non-degenerate
pairing on $V/W = \Ker N / (\Ker N \cap (\Ker N)^\perp)$.
\end{remark}

In the above theorem, we identified the subquotient $(V/W, \star|_{q_2=1})$
of $H^*(X_\res)$
with a \emph{subring} of the quantum cohomology of $\Gr(2,4)$.
We can extend this isomorphism to the whole of $H^*(\Gr(2,4))$
as follows.
The weight filtration $W_{-2} \subset W_{-1} \subset
W_0 \subset W_1 \subset W_2=H^{\rm even}(X_\res)$
associated to the nilpotent endomorphism $N$ (see e.g.~\cite[A.2]{Hodge_theory:book})
is given as follows:
\begin{align*}
& W_{-2} = W_{-1} = \Span\{m_1^3-2m_1^2m_2\}, \\
& W_0 = W_1 = \Span\{m_1^3-2m_1^2m_2,
m_1^2-2m_1m_2, 1, m_1, m_1^2, m_1^3, m_1^4\}.
\end{align*}
This is illustrated by the following table:
\[
\begin{tabular}{cccccccccc}
\parbox[c][20pt][c]{0pt}{}  &  &  &  &  &
$m_1^3 -2 m_1^2 m_2$ &  \phantom{abc} &
\multicolumn{2}{c|}{$W_{-2} = W_{-1}$} &
\multicolumn{1}{c|}{\phantom{$A_{X}^{X}$}}
\\
\cline{1-9}
\parbox[c][20pt][c]{0pt}{} $1$ & $m_1$ & $m_1^2$ & $m_1^3$ & $m_1^4$
& $m_1^2 - 2 m_1m_2$  & \phantom{$A_{X}^{X}$}
&  & \multicolumn{2}{r|}{$W_0 = W_1$}
\\
\hline
\parbox[c][20pt][c]{0pt}{} & & & & & $m_1 -2 m_2$ & & & & $W_2$
\end{tabular}
\]
Therefore $V/W$ can be regarded as a subspace of $W_0/W_{-1}$.
We define a linear isomorphism
$\theta \colon W_0/W_{-1} \cong H^*(\Gr(2,4))$ by
\begin{align*}
\theta(m_1^i) & = d^i  \qquad \text{for $0\le i\le 4$}, \\
\theta(m_1^2-2m_1m_2) & = \sqrt{-1}(d^2-2\delta).
\end{align*}
This gives an extension of the map $r^* \circ (\pi^*)^{-1}
\colon V/W \to H^*(\Gr(2,4))$. We have the following:
\begin{theorem}
\label{thm:Gr(2,4)_weight_filtration}
The quantum products of elements in $W_0$ are regular
at $q_2= 1$ and belong to $W_0$.
The quantum product $\star|_{q_2=1}$ on $W_0$
descends to $W_0/W_{-1}$ and $\theta$ induces an isomorphism
of rings:
\[
\theta\colon (W_0/W_{-1}, \star|_{q_2=1}) \cong (H^*(\Gr(2,4)),\star)
\]
under the identification $q_1 = q$.
Moreover $\theta$ preserves the Poincar\'e pairing.
\end{theorem}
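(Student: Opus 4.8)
The plan is to verify the claim by direct inspection of the two quantum multiplication matrices $m_1\star$ and $m_2\star$ computed in \S\ref{subsec:qc_Gr(2,4)_toric}, together with the quantum product matrices of $d$ and $\delta$ on $H^*(\Gr(2,4))$ from \S\ref{subsec:qc_Gr(2,4)}. First I would record the weight filtration explicitly: $W_0$ is spanned by the six ``diagonal'' classes $1, m_1, m_1^2, m_1^3, m_1^4, m_1^2-2m_1m_2$ together with $m_1^3-2m_1^2m_2$, and $W_{-1}=\C(m_1^3-2m_1^2m_2)$. The content of the theorem splits into three checks: (i) regularity at $q_2=1$ and stability of $W_0$ under the products $m_1\star$ and $m_2\star$; (ii) descent of $\star|_{q_2=1}$ to the quotient $W_0/W_{-1}$; and (iii) the ring and pairing isomorphism via $\theta$.

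For step (i), the only potentially singular entries are those involving $\frac{2(1+q_2)}{1-q_2}$ in $m_2\star$, located in the rows/columns of $m_1^2-2m_1m_2$ and $m_1^3-2m_1^2m_2$. The key observation I would make is that these poles feed \emph{into} the $W_{-1}$ direction $m_1^3-2m_1^2m_2$: applying $m_2\star$ to $m_1^2-2m_1m_2$ produces a term proportional to the singular coefficient times $m_1^3-2m_1^2m_2$. Thus, although the operator is singular at $q_2=1$ on the nose, the singular part lands in $W_{-1}$, so that after passing to $W_0/W_{-1}$ the induced operator is regular. More precisely, I would show that for $v\in W_0$, the product $m_i\star v$ can be written as $(\text{regular at }q_2=1)\,+\,(\text{pole})\cdot(m_1^3-2m_1^2m_2)$, which simultaneously establishes that $W_0$ is preserved modulo $W_{-1}$ and that the descended product is regular. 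This is the step I expect to be the main obstacle, because one must track exactly how the singular coefficient $\tfrac{2(1+q_2)}{1-q_2}$ specializes: the constant term of its Taylor/Laurent tail at $q_2=1$ (equivalently the regular part $\lim_{q_2\to 1}$ of the product after removing the $W_{-1}$ component) is what determines the structure constants downstairs, and a careless limit would give the wrong value. Setting $q_2=1$ in the regular entries gives $m_1\star|_{q_2=1}$ and the regularized $m_2\star|_{q_2=1}$ as explicit $7\times 7$ matrices on $W_0$, which then induce $6\times 6$ operators on $W_0/W_{-1}$.

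For step (iii), with the induced matrices in hand I would compare them to $d\star$ and $\delta\star$ under $\theta$. Since $\theta(m_1^i)=d^i$ and $\theta(m_1^2-2m_1m_2)=\sqrt{-1}(d^2-2\delta)$, the map on $m_1\star$ matches $d\star$ (identifying $q_1=q$) essentially by construction, as $m_1\star$ has no dependence on the singular class. The nontrivial identity is that the induced $m_2$-operator corresponds to $\delta\star$ under $\theta$; here the factor $\sqrt{-1}$ is exactly what is needed to absorb a sign produced when the regularized structure constants are matched against the entries $-\tfrac12, \tfrac12$ appearing in $\delta\star$, explaining why the isomorphism ``involves an imaginary number.'' It suffices to check that $\theta$ intertwines both generators, since $d$ and $\delta$ generate $QH^*(\Gr(2,4))$ as an algebra and $m_1, m_2$ generate $H^*(X_\res)$; associativity of both quantum products then propagates the match to all products. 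Finally, the pairing statement follows by comparing the Poincar\'e pairings: on $W_0/W_{-1}$ the pairing is nondegenerate because $(\Ker N)^\perp=\Image N$, and one checks $\theta$ sends the pairing matrix in the $\{1,m_1,\dots,m_1^4, m_1^2-2m_1m_2\}$ basis to that of $\{1,d,\dots,d^4, d^2-2\delta\}$; the factor $(\sqrt{-1})^2=-1$ on the $(m_1^2-2m_1m_2)$ slot accounts for the corresponding self-pairing sign on the $\Gr(2,4)$ side, yielding an isometry.
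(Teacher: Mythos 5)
The paper offers no written proof of Theorem \ref{thm:Gr(2,4)_weight_filtration} beyond the explicit matrices of \S\ref{subsec:qc_Gr(2,4)} and \S\ref{subsec:qc_Gr(2,4)_toric}, so a direct matrix verification is indeed the intended route, and your steps (ii)--(iii) are organized in the right spirit. However, step (i) contains a genuine gap coming from a misidentification of which products the theorem is about. The class $m_2$ does \emph{not} lie in $W_0$: the degree-two part of $W_0$ is $\C m_1$, while $m_1-2m_2$ sits in $W_2\setminus W_0$. The first assertion of the theorem is that $v\star w$ is regular at $q_2=1$ for $v,w\in W_0$, with \emph{no} quotient taken; you instead analyze $(m_2\star)$ on $W_0$, find it singular ``on the nose,'' and conclude regularity only after passing to $W_0/W_{-1}$. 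Since $(m_2\star)$ is not multiplication by an element of $W_0$, this neither proves nor contradicts the claim --- read literally, your argument would suggest the theorem's first sentence is false. The products that actually need checking are $m_1\star v$ (manifestly regular) and $(m_1^2-2m_1m_2)\star v$ for $v\in W_0$, and the latter operator is not one of the two displayed matrices: it must be assembled from them via associativity, e.g.\ $(m_1^2-2m_1m_2)\star = (m_1\star)\circ\bigl((m_1\star)-2(m_2\star)\bigr)$ using $m_1\star m_1=m_1^2$ and $m_1\star m_2=m_1m_2$. The true mechanism for regularity is then visible and is different from the one you propose: the pole of $(m_2\star)$ on $W_0$ points in the direction $m_1^3-2m_1^2m_2$, and the subsequent multiplication $m_1\star(m_1^3-2m_1^2m_2)=q_1(1-q_2)\cdot 1$ carries a factor $(1-q_2)$ that cancels the pole $\tfrac{2(1+q_2)}{1-q_2}$. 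It is this cancellation, not ``the singular part lands in $W_{-1}$,'' that makes products of $W_0$-elements regular at $q_2=1$.

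The same slip undermines your reduction in step (iii). Checking that $\theta$ intertwines $(m_2\star)$ with $(\delta\star)$ is not well-posed on $W_0/W_{-1}$, because $(m_2\star)$ does not preserve $W_0$ even modulo $W_{-1}$ (already $m_2\star 1=m_2\notin W_0$). The correct second generator of the subquotient ring is $m_1^2-2m_1m_2$, matched with $\sqrt{-1}(d^2-2\delta)$ under $\theta$; one must verify $\theta\bigl((m_1^2-2m_1m_2)\star v\bigr)=\sqrt{-1}(d^2-2\delta)\star\theta(v)$ for $v$ in a basis of $W_0/W_{-1}$, and in particular the square $(m_1^2-2m_1m_2)\star(m_1^2-2m_1m_2)|_{q_2=1}$, which is where the factor $(\sqrt{-1})^2=-1$ is genuinely tested. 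Your remarks about the $\sqrt{-1}$ absorbing a sign and about the pairing isometry are on target, but they rest on the intertwining computation that your framework, as organized around $(m_1\star)$ and $(m_2\star)$ alone, does not actually set up.
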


\begin{remark}
It is curious that we have imaginary numbers in the isomorphism $\theta$.
The assignment $\theta\colon m_1^2 - 2m_1m_2 \mapsto
\sqrt{-1} (d^2 - 2 \delta)$
is uniquely determined up to sign if we require that $\theta$ coincides with
$r^* \circ (\pi^*)^{-1}$ on $V/W$ and intertwines the
quantum products.
\end{remark}

\section{Example: $\Gr(2,5)$}
\label{sec:Gr(2,5)}
In this section we study an extremal transition of the 6-dimenional Fano variety
$\Gr(2,5)$, the space of complex two planes in $\C^5$.
Unlike the previous two examples in \S \ref{sec:flag}
and \S\ref{sec:Gr(2,4)}, the image of the Pl\"{u}cker embedding
of $\Gr(2,5)$ is not a hypersurface nor a complete intersection.
We use the toric degeneration of $\Gr(2,5)$ and its
crepant resolution
studied by Gonciulea-Lakshmibai \cite{Gonciulea-Lakshmibai}
and Batyrev--Ciocan-Fontanine--Kim--van-Straten \cite{BCFKvS}.

According to \cite{Gonciulea-Lakshmibai, BCFKvS},
the Grassmannian $\Gr(2,5)$ admits a flat degeneration
to the Gorenstein toric variety $X_\sing$ defined by
the following 6-dimensional fan.
The primitive generators of the 1-dimensional cones are:
\begin{align*}
&r_1 = (1,0,0,0,0,0), \quad
r_2 = (-1,1,0,0,0,0), \quad
r_3 = (-1,0,1,0,0,0), \\
&r_4 = (0,-1,0,1,0,0), \quad
r_5 = (0,0,-1,1,0,0), \quad
r_6 = (0,0,-1,0,1,0),\\
&r_7 = (0,0,0,-1,0,1),\quad
r_8 = (0,0,0,0,-1,1), \quad
r_9 = (0,0,0,0,0,-1).
\end{align*}
The top dimensional cones are:
\begin{align*}
&\langle r_2, r_3, r_4, r_5, r_6, r_7, r_8, r_9 \rangle,
 \quad
 \langle r_1, r_2, r_3, r_4, r_5, r_6, r_7, r_8 \rangle,\\
 \quad
&\langle r_1, r_4, r_5, r_6, r_7, r_8, r_9 \rangle,
 \quad
 \langle r_1, r_2, r_5, r_6, r_7, r_8, r_9 \rangle,
 \quad
 \langle r_1, r_2, r_3, r_4, r_5, r_8, r_9 \rangle,\\
 \quad
&\langle r_1, r_2, r_3, r_4, r_5, r_6, r_9 \rangle,
 \quad
 \langle r_1, r_3, r_4, r_7, r_8, r_9 \rangle,
 \quad
 \langle r_1, r_3, r_4, r_6, r_7, r_9 \rangle,\\
 \quad
&\langle r_1, r_2, r_3, r_7, r_8, r_9 \rangle,
 \quad
 \langle r_1, r_2, r_3, r_6, r_7, r_9 \rangle
\end{align*}
In order to obtain a crepant small resolution of $X_\sing$,
we divide non-simplicial cones as follows:
\begin{itemize}
\item divide $\langle r_2, r_3, r_4, r_5, r_6, r_7, r_8, r_9 \rangle$
into $\langle r_2, r_3, r_5, r_6, r_7, r_9 \rangle$,
$\langle r_2, r_3, r_5, r_7, r_8, r_9 \rangle$,
$\langle r_3, r_4, r_5, r_6, r_7, r_9 \rangle$,
$\langle r_3, r_4, r_5, r_7, r_8, r_9 \rangle$;

\item divide $\langle r_1, r_2, r_3, r_4, r_5, r_6, r_7, r_8 \rangle$
into $\langle r_1, r_2, r_3, r_5, r_6, r_7 \rangle$,
$\langle r_1, r_2, r_3, r_5, r_7, r_8 \rangle$,
$\langle r_1, r_3, r_4, r_5, r_6, r_7 \rangle$,
$\langle r_1, r_3, r_4, r_5, r_7, r_8 \rangle$;

\item divide $\langle r_1, r_4, r_5, r_6, r_7, r_8, r_9 \rangle$
into $\langle r_1, r_4, r_5, r_6, r_7, r_9 \rangle$,
$\langle r_1, r_4, r_5, r_7, r_8, r_9 \rangle$;

\item divide $\langle r_1, r_2, r_5, r_6, r_7, r_8, r_9 \rangle$
into $\langle r_1, r_2, r_5, r_6, r_7, r_9 \rangle$,
$\langle r_1, r_2, r_5, r_7, r_8, r_9 \rangle$;

\item divide $\langle r_1, r_2, r_3, r_4, r_5, r_8, r_9 \rangle$
into $\langle r_1, r_2, r_3, r_5, r_8, r_9 \rangle$,
$\langle r_1, r_3, r_4, r_5, r_8, r_9 \rangle$;

\item divide $\langle r_1, r_2, r_3, r_4, r_5, r_6, r_9 \rangle$
into $\langle r_1, r_2, r_3, r_5, r_6, r_9 \rangle$,
$\langle r_1, r_3, r_4, r_5, r_6, r_9 \rangle$.
\end{itemize}
These subdivisions define a smooth toric variety $X_\res$.
In this section we study a relationship between the quantum
cohomology of $\Gr(2,5)$ and $X_\res$.

\subsection{Quantum cohomology of $\Gr(2,5)$}
We refer the reader to \cite{Bertram:qSchubert,Ciocan-Fontanine:partialflag} for
the quatnum cohomology of $\Gr(2,5)$.
It is well known that the Poincar\'{e} duals
of the Schubert cycles form an additive basis
of the cohomology ring of $\Gr(2,5)$.
Fix a full flag $0 \subset F_1 \subset F_2 \subset \cdots \subset F_5 = \C^5$.
The Schubert cycle $\Omega_{(a_1,a_2)}\subset \Gr(2,5)$,
indexed by a pair $(a_1, a_2)$ of integers satisfying
$3 \geq a_1 \geq a_2 \geq 0$, is given by:
\begin{equation} 
\label{eq:Schubert} 
\Omega_{(a_1,a_2)} = \left\{ V \subset \C^5: \dim V =2,
\dim(V \cap F_{4-a_1})\ge 1,  V\subset F_{5-a_2} \right\}.
\end{equation} 
We denote by $\omega_{(a_1,a_2)} \in H^{2(a_1+a_2)}(\Gr(2,5))$
the Poincar\'{e} dual of
the Schubert cycle $\Omega_{(a_1,a_2)}$.
The dual basis of $\{ \omega_{(a_1,a_2)}\} $ is
given by $\{\omega_{(3-a_2,3-a_1)}\}$.
We choose the following additive basis of $H^*(\Gr(2,5))$:
\[
\left\{
\omega_{(0,0)},\
\omega_{(1,0)},\
\omega_{(1,1)},\
\omega_{(2,0)},\
\omega_{(2,1)},\
\omega_{(3,0)},\
\omega_{(3,1)},\
\omega_{(2,2)},\
\omega_{(3,2)},\
\omega_{(3,3)}
\right\}.
\]
Let $q$ be the Novikov variable dual to the ample class $\omega_{(1,0)}
\in H^2(\Gr(2,5))$. We have $\deg q= 10$.
The class $\omega_{(1,0)}$ generates the small quantum
cohomology ring of $\Gr(2,5)$ and its quantum product
is given by the following matrix:
\[
\omega_{(1,0)} \star= \left(
\begin{array}{cccccccccc}
0&0&0&0&0&0&q&0&0&0\\
1&0&0&0&0&0&0&0&q&0\\
0&1&0&0&0&0&0&0&0&0\\
0&1&0&0&0&0&0&0&0&q\\
0&0&1&1&0&0&0&0&0&0\\
0&0&0&1&0&0&0&0&0&0\\
0&0&0&0&1&1&0&0&0&0\\
0&0&0&0&1&0&0&0&0&0\\
0&0&0&0&0&0&1&1&0&0\\
0&0&0&0&0&0&0&0&1&0
\end{array}
\right).
\]
\subsection{Quantum cohomology of $X_\res$}
Let $R_i$ denote the class of the toric divisor
corresponding to the ray $\R_{\ge 0}r_i$.
We choose a basis $\{m_1, m_2, m_3\}$ of $H^2(X_\res)$
as $m_1 = R_1$, $m_2 = R_2$, $m_3 = R_6$.
Then we have
\begin{align*}
R_1 & = m_1, \ R_2 = m_2, \
R_3 = m_1 - m_2, \
R_4 = m_2, \
R_5 = m_1-m_2-m_3, \\
R_6 &= m_3, \
R_7 = m_1- m_3, \
R_8 = m_3, \ R_9 = m_1.
\end{align*}
The cohomology ring of $X_\res$ is given by:
\[
H^\star(X_\sm) = \C[m_1,m_2,m_3]/
\langle m_2^2, \ m_3^2, \
m_1^2 (m_1-m_2) (m_1-m_2-m_3) (m_1-m_3)
\rangle.
\]
The classes $m_1, m_2, m_3$ span the nef cone of $X_\res$.
Let $\{\beta_1,\beta_2,\beta_3\}\subset H_2(X_\res)$
be the dual basis of $\{m_1, m_2, m_3\}$; they
span the Mori cone of $X_\res$.
For $d= n_1 \beta_1 + n_2 \beta_2 + n_3 \beta_3 \in H_2(X_\res)$,
we write $q^d = q_1^{n_1} q_2^{n_2}q_3^{n_3}$,
where $q_i = q^{\beta_i}$.
We have $\deg q_1 = 10$, $\deg q_2 = \deg q_3 = 0$.
We choose the following basis for $H^*(X_\res)$:
\begin{align}
\label{eq:basis_toric_Gr(2,5)}
\left\{
\begin{array}{l}
1, \ m_1, \ m_2, \
m_3, \  m_1^2, \  m_1m_2,\ m_1m_3,\
m_2m_3, \ m_1^3, \ m_1^2 m_2, \ m_1^2m_3, \\ m_1m_2m_3, \
m_1^4, \ m_1^3m_2, \ m_1^3m_3, \ m_1^2m_2m_3, \ m_1^5, \
m_1^4m_2, \ m_1^4m_3, \ m_1^6
\end{array}
\right \}
\end{align}
We use Givental's mirror theorem \cite{Givental:mirrorthm_toric}
to calculate the quantum product; see Appendix \ref{sec:app}
for the details.

The quantum products of $m_1$ with cohomology classes
in the chosen basis \eqref{eq:basis_toric_Gr(2,5)} are as follows:
\begin{align*}
m_1 \star m_1^4 &= m_1^5 + q_1(1 + q_2 + q_3),\\
m_1 \star m_1^3m_2 &= m_1^4m_2 + q_1q_2,\\
m_1 \star m_1^3m_3 &= m_1^4m_3 + q_1q_3,\\
m_1 \star m_1^5 &= m_1^6 + (2m_2 + 2m_3)q_1
+ (m_3 + 2m_1 - 2m_2)q_1q_2\\
& \quad+
(m_2 + 2m_1 - 2m_3)q_1q_3 + (m_1 - m_2 - m_3)q_1q_2q_3,\\
m_1 \star m_1^4m_2 &= m_1^5m_2 + m_2q_1
+ (m_1 - m_2 + m_3)q_1q_2 + m_2q_1q_3 + (m_1 - m_2 - m_3)q_1q_2q_3,\\
m_1 \star m_1^4m_3 &= m_1^5m_3 + m_3q_1 +
(m_1 - m_3 + m_2)q_1q_3 + m_3q_1q_2 + (m_1 - m_2 - m_3)q_1q_2q_3, \\
m_1 \star m_1^6 &= 5m_2m_3q_1 + (5m_1m_3 - 5m_2m_3)q_1q_2 +
(5m_1m_2 - 5m_2m_3)q_1q_3
\end{align*}
and all the other quantum products coincide with the cup products.

The quantum products of $m_2$ with cohomology classes
in the chosen basis \eqref{eq:basis_toric_Gr(2,5)} are as follows:
\begin{align*}
m_2 \star m_2 & = (m_1 - m_2)(m_1 - m_2 - m_3)\frac{q_2}{1 - q_2},\\
m_2 \star m_1m_2 &= m_1(m_1 - m_2)(m_1 - m_2 - m_3)\frac{q_2}{1 - q_2} ,\\
m_2 \star m_2m_3 &= m_3(m_1 - m_2)(m_1 - m_2 - m_3)\frac{q_2}{1 - q_2} \\
& \quad - (m_1 - m_2)(m_1 - m_3)(m_1 - m_2 - m_3)
\frac{q_2q_3}{(1 - q_2)(1 - q_2 - q_3)}, \\
m_2 \star m_1^2m_2 & =
 m_1^2(m_1 - m_2)(m_1 - m_2 - m_3)\frac{q_2}{1 - q_2}, \\
m_2 \star m_1m_2m_3 & =
m_1m_3(m_1 - m_2)(m_1 - m_2 - m_3)\frac{q_2}{1 - q_2}\\
& \quad - m_1(m_1 - m_2)(m_1 - m_3)(m_1 - m_2 - m_3)
\frac{q_2q_3}{(1 - q_2)(1 - q_2 - q_3)},\\
m_2 \star m_1^4 & = m_1^4m_2 + q_1q_2, \\
m_2 \star m_1^3m_2 & = m_1^3(m_1 - m_2)(m_1 - m_2 - m_3)
\frac{q_2}{1 - q_2} + q_1q_2, \\
m_2 \star m_1^2m_2m_3 & = 
m_1^2m_3(m_1 - m_2)(m_1 - m_2 - m_3)\frac{q_2}{1 - q_2}, \\
m_2 \star m_1^5 & = m_1^5m_2 + (m_3 + 2m_1 - 2m_2)q_1q_2
+ (m_1 - m_2 - m_3)q_1q_2q_3, \\
m_2 \star m_1^4m_2 & = (m_1 - m_2 + m_3)q_1q_2
+ (m_1 - m_2 - m_3)q_1q_2q_3, \\
m_2 \star m_1^4m_3 & = m_1^4m_2m_3
+ (m_1 - m_2 - m_3)q_1q_2q_3 + m_3q_1q_2, \\
m_2 \star m_1^6 & = (5m_1m_3 - 5m_2m_3)q_1q_2
\end{align*}
All the other quantum products with $m_2$ are the same as the cup products.

The quantum products of $m_3$ with cohomology classes
in the chosen basis \eqref{eq:basis_toric_Gr(2,5)} are as follows:
\begin{align*}
m_3 \star m_3 & = (m_1 - m_3)(m_1 - m_2 - m_3)\frac{q_3}{1 - q_3},\\
m_3 \star m_1m_3 & = m_1(m_1 - m_3)(m_1 - m_2 - m_3)
\frac{q_3}{1 - q_3}, \\
m_3 \star m_2m_3 & = m_2(m_1 - m_3)(m_1 - m_2 - m_3)
\frac{q_3}{1 - q_3}, \\
& \quad - (m_1 - m_2)(m_1 - m_3)(m_1 - m_2 - m_3)
\frac{q_2q_3}{(1 - q_3)(1 - q_2 - q_3)},\\
m_3 \star m_1^2m_3 & = m_1^2(m_1 - m_3)(m_1 - m_2 - m_3)
\frac{q_3}{1 - q_3},\\
m_3 \star m_1m_2m_3 & = m_1m_2(m_1 - m_3)(m_1 - m_2 - m_3)
\frac{q_3}{1 - q_3}, \\
& \quad - m_1(m_1 - m_2)(m_1 - m_3)(m_1 - m_2 - m_3)
\frac{q_2q_3}{(1 - q_3)(1 - q_2 - q_3)}, \\
m_3 \star m_1^4 & = m_1^4m_3 + q_1q_3, \\
m_3 \star m_1^3m_3 & = m_1^3(m_1 - m_3)(m_1 - m_2 - m_3)
\frac{q_3}{1 - q_3} + q_1q_3, \\
m_3 \star m_1^2m_2m_3 & = m_1^2m_2(m_1 - m_3)(m_1 - m_2 - m_3)
\frac{q_3}{1 - q_3},\\
m_3 \star m_1^5 &= m_1^5m_3 + (m_2 + 2m_1 - 2m_3)q_1q_3
+ (m_1 - m_2 - m_3)q_1q_2q_3,\\
m_3 \star m_1^4m_2 & =  m_1^4m_2m_3 + m_2q_1q_3 +
(m_1 - m_2 - m_3)q_1q_2q_3,\\
m_3 \star m_1^4m_3 & =  (m_1 - m_3 + m_2)q_1q_3 +
(m_1 - m_2 - m_3)q_1q_2q_3,\\
m_3 \star m_1^6 & = (5m_1m_2 - 5m_2m_3)q_1q_3
\end{align*}
All the other quantum products with $m_3$ are the same as
the cup products.

\subsection{Comparison of quantum cohomology}
The quantum product of $m_2$ has simple poles along $q_2 = 1$
and $q_2 + q_3 = 1$; the quantum product of $m_3$
has simple poles along $q_3 = 1$ and $q_2 + q_3 = 1$.
We define
\begin{align*}
N_2 & := \left. \Res_{q_2=1} (m_2 \star) \frac{dq_2}{q_2}
\right|_{(q_2,q_3)=(1,1)}\\
N_3 & := \left. \Res_{q_3=1} (m_3 \star) \frac{dq_3}{q_3}
\right|_{(q_2,q_3)=(1,1)}
\end{align*}
These are nilpotent endomorphisms. Thus the monodromy of the quantum
connection around the normal crossing divisors
$(q_2 =1)$, $(q_3 = 1)$ is unipotent.
As before, the endomorphisms $N_2$, $N_3$ define
the filtration $0\subset W \subset V \subset H^*(X_\res)$ by:
\begin{equation} 
\label{eq:V_W_Gr(2,5)} 
V:= \Ker(N_2) \cap \Ker(N_3), \qquad
W := V \cap (\Image(N_2) + \Image(N_3)).
\end{equation} 
We have $\dim V = 12$ and $\dim W = 2$.
The basis of $V$ is given by
\[
1, \ m_1, \ m_1^2, \ m_1^3, \ m_1^4, \ m_1^5, \ m_1^6, \
\alpha, \ m_1 \alpha, \ m_1^2 \alpha, \ m_1^4 m_2, \ m_1^4 m_3,
\]
where $\alpha := m_1m_2 + m_1 m_3 - m_2m_3$, 
and the basis of $W$ is given by 
\[
m_1^4 m_2 - m_1^4 m_3, \quad 2m_1^5 - 5 m_1^4 m_2.
\] 
Define a linear map $\theta \colon V \rightarrow H^*(X_\sm)$
as follows:
\begin{align}
\label{eq:theta_Gr(2,5)}
\begin{aligned}
 \theta(m_1^i) &=  (\omega_{(1,0)})^i &&
0 \le i \le 6, \\
\theta(m_1^i \alpha)&= (\omega_{(1,0)})^i \omega_{(2,0)},
&&  0\le i\le 2 \\
\theta(m_1^4 m_2) & = 2 \omega_{(3,2)} \\
\theta(m_1^4 m_3) &  = 2 \omega_{(3,2)}. 
\end{aligned}
\end{align} 
We have $\Ker \theta  =W$ and the map $\theta$ induces an isomorphism:
\[
\theta \colon V/W \cong H^*(\Gr(2,5)).
\]
Note that the quantum product of $m_1$ is regular along $q_2 = q_3 =1$.
Since $(m_1\star)$ commutes with $(m_2\star)$ and $(m_3\star)$,
it follows that $(m_1\star)|_{q_2=q_3=1}$ commutes with
$N_2$ and $N_3$; thus $(m_1\star)|_{q_2=q_3=1}$ descends
to the quotient space $V/W$ and defines a ring structure on $V/W$.
The following result follows by a direct computation:

\begin{theorem}
\label{thm:Gr(2,5)}
The quantum product on $H^*(X_\res)$ at $q_2=q_3=1$ descends to
a well-defined product structure on $V/W$.
The linear isomorphism $\theta \colon V/W \cong H^*(\Gr(2,5))$
intertwines the quantum product $\star|_{q_2=q_3=1}$ on $V/W$
with the quantum product on $H^*(\Gr(2,5))$.
Moreover $\theta$ preserves the Poincar\'e pairing.
\end{theorem}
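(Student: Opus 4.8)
The plan is to exploit the structural fact recorded just above the statement: the small quantum cohomology $QH^*(\Gr(2,5))$ is generated as a $\C[[q]]$-algebra by the single class $\omega_{(1,0)}$; equivalently the operator $(\omega_{(1,0)}\star)$ is non-derogatory, so that $1,\omega_{(1,0)},\omega_{(1,0)}^{\star 2},\dots,\omega_{(1,0)}^{\star 9}$ form a $\C[[q]]$-basis. This reduces the entire ring comparison to a single operator: it will suffice to produce on $V/W$ the endomorphism induced by $(m_1\star)$ and to match it with $(\omega_{(1,0)}\star)$ through $\theta$.

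First I would establish the descent of multiplication by $m_1$. Inspecting the multiplication table, every structure constant of $(m_1\star)$ is a polynomial in $q_1,q_2,q_3$, so $(m_1\star)$ is regular along $q_2=q_3=1$; only $(m_2\star)$ and $(m_3\star)$ carry poles, and these produce $N_2,N_3$. Since the quantum product is associative and commutative, $(m_1\star)$ commutes with $(m_2\star)$ and $(m_3\star)$ as operators over the Novikov ring; taking residues along $q_2=1$ and $q_3=1$ (pulling the regular factor $(m_1\star)$ out of the residue) and restricting to $(q_2,q_3)=(1,1)$ shows that $A:=(m_1\star)|_{q_2=q_3=1}$ commutes with $N_2$ and $N_3$. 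Hence $A$ preserves $V=\Ker N_2\cap\Ker N_3$ as well as $\Image N_2+\Image N_3$, therefore preserves $W$, and induces an endomorphism $\bar A$ of $V/W$.

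The core verification is then a finite, direct computation: for each of the twelve listed basis vectors $v$ of $V$ — the powers $m_1^i$, the classes $m_1^i\alpha$ with $\alpha=m_1m_2+m_1m_3-m_2m_3$, and $m_1^4m_2,m_1^4m_3$ — I would compute $m_1\star v$ from the table, specialize at $q_2=q_3=1$, reduce modulo $W=\Span\{m_1^4m_2-m_1^4m_3,\,2m_1^5-5m_1^4m_2\}$, apply $\theta$, and check equality with $\omega_{(1,0)}\star\theta(v)$ read off the $\omega_{(1,0)}\star$ matrix. This gives the operator identity $\theta\circ\bar A=(\omega_{(1,0)}\star)\circ\theta$. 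Together with $\theta(1)=1$, induction yields $\theta(\bar A^{k}[1])=\omega_{(1,0)}^{\star k}$; since the latter form a basis and $\theta$ is a linear isomorphism, $\{\bar A^{k}[1]\}_{k=0}^{9}$ is a basis of $V/W$, i.e.\ $[1]$ is a cyclic vector for $\bar A$ and $[m_1]$ generates $V/W$. Consequently every element of $V$ is a $\C[[q_1]]$-combination of the classes $m_1^{\star k}|_{q_2=q_3=1}$, so any product $u\star v$ with $u,v\in V$ reduces, by associativity over the Novikov ring before specialization, to such combinations, which are regular at $q_2=q_3=1$ and lie in $V$. This proves that the full quantum product descends to a well-defined commutative product $\bar\star$ on $V/W$ with unit $[1]$ and multiplication-by-$[m_1]$ operator $\bar A$; writing $u=\sum_k a_k[m_1]^{\bar\star k}$ and iterating the intertwining relation then gives $\theta(u\bar\star v)=\theta(u)\star\theta(v)$, so $\theta$ is a ring isomorphism.

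Finally, for the Poincar\'e pairing I would note that $N_2,N_3$ are self-adjoint for the Poincar\'e pairing (residues of the self-adjoint operators $(m_2\star),(m_3\star)$, self-adjointness following from the Frobenius symmetry of three-point invariants), whence $\Image N_i=(\Ker N_i)^\perp$ and $W=V\cap V^\perp$; the pairing therefore descends non-degenerately to $V/W$. It remains to compare pairing matrices: compute $\int_{X_\res}v_i\cup v_j$ on the $V$-basis against $\int_{\Gr(2,5)}\theta(v_i)\cup\theta(v_j)$ via Schubert duality $\omega_{(a_1,a_2)}\cdot\omega_{(3-a_2,3-a_1)}=1$, and check agreement modulo $W$. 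The main obstacle I anticipate is not any one step but the bookkeeping of the descent: one must confirm that products of $V$-elements which individually pass through the polar loci of $(m_2\star),(m_3\star)$ nevertheless specialize regularly and close up within $V$ modulo $W$, and that $\theta$ — which, unlike the conifold case, is \emph{not} $r^*\circ(\pi^*)^{-1}$ on all of $V/W$ — is forced by cyclicity. The most delicate consistency point is that $\theta$ sends both $m_1^4m_2$ and $m_1^4m_3$ to $2\omega_{(3,2)}$, which is compatible precisely because $\omega_{(1,0)}^5=5\omega_{(3,2)}$ guarantees $2m_1^5-5m_1^4m_2\in\Ker\theta=W$.
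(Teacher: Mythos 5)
Your overall strategy coincides with the paper's. The paper establishes exactly the two structural points you lead with --- $(m_1\star)$ is regular along $q_2=q_3=1$ and commutes with $(m_2\star)$, $(m_3\star)$, hence its specialization commutes with $N_2,N_3$ and descends to $V/W$ --- and then disposes of the rest with the phrase ``follows by a direct computation.'' Your reduction of that computation to the single operator identity $\theta\circ\bar A=(\omega_{(1,0)}\star)\circ\theta$ on the twelve listed basis vectors, using that $\omega_{(1,0)}$ generates $QH^*(\Gr(2,5))$, is a reasonable way to organize the same check, and your treatment of the pairing (self-adjointness of $N_2,N_3$, so $W=V\cap V^\perp$) matches the general framework of \S\ref{sec:conifold}. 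One small correction: the powers $1,\omega_{(1,0)},\dots,\omega_{(1,0)}^{\star 9}$ are \emph{not} a $\C[\![q]\!]$-basis --- from the matrix one finds $\omega_{(1,0)}^{\star 7}=q(8\omega_{(1,1)}+13\omega_{(2,0)})$, etc., so the transition matrix is singular at $q=0$; they are a basis only over $\C[q,q^{-1}]$ (equivalently for each fixed $q\neq 0$), which is all you need but should be said.

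The one genuine gap is in your argument that the \emph{full} quantum product descends to $V/W$. The cyclic decomposition $[u]=p_u(\bar A)[1]$ lives on $V/W$ \emph{after} specializing $q_2=q_3=1$: the coefficients depend on $q_1$ and the congruence holds only modulo $W$ at the specialized point. It therefore cannot be fed into ``associativity over the Novikov ring before specialization.'' For constant classes $u,v\in V$ the product $u\star v$ is computed over the full Novikov ring, where $u$ is not a $\star$-polynomial in $m_1$ plus an element of $W$, and $(u\star)$ for a class of degree $\ge 4$ is not the corresponding $\star$-polynomial in $(m_1\star),(m_2\star),(m_3\star)$; so the simple poles along $q_2=1$, $q_3=1$ and $q_2+q_3=1$ visible in the $m_2$- and $m_3$-tables do not cancel for free, and regularity plus $V$-closedness of $u\star v$ (and $u\star w\in W$ for $w\in W$) must be checked. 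Either you verify this directly on the basis of $V$ --- which is what the paper's ``direct computation'' amounts to --- or you take the product on $V/W$ to be \emph{defined} by the cyclic structure generated by $\bar A$ (as the paragraph preceding the theorem in the paper suggests), in which case the first sentence of the theorem is precisely the assertion that still requires the direct check. Everything else in your proposal, including the observation that $2m_1^5-5m_1^4m_2\in\Ker\theta$ is forced by $\omega_{(1,0)}^5=5\omega_{(3,2)}$, is sound.
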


\begin{remark}
When $a\neq 0$ and $b\neq 0$, the nilpotent operator $N = a N_2 + b N_3$
defines a weight filtration $\{W_\bullet\}$ independent of $(a,b)$.
The Jordan normal form of $N$ consists of 10 Jordan blocks of size 1
(one-by-one zero matrices) and 2 Jordan blocks of size 5.
Therefore $W_0/W_{-1}$ gives a 12-dimensional space which
is bigger than $H^*(X_\sm)$.
The above quotient $V/W$ corresponds to Jordan blocks
of size 1.
\end{remark}

\subsection{Topology of the extremal transition of $\Gr(2,5)$}
\label{subsec:topology_Gr(2,5)}
We study a relationship between the map 
$\theta$ in Theorem \ref{thm:Gr(2,5)} and the  
maps on cohomology induced by the natural maps 
$X_\res \rightarrow X_\sing \leftarrow X_\sm=\Gr(2,5)$. 
In this section, we prove the following. 

\begin{theorem} 
\label{thm:Gr(2,5)_topology} 
Let $\pi\colon X_\res \to X_\sing$ and $r\colon X_\sm = \Gr(2,5) \to 
X_\sing$ denote the natural maps associated to the extremal 
transition of $\Gr(2,5)$. 
Let $V$, $W$ be as given in \eqref{eq:V_W_Gr(2,5)}. 
We have the following commutative diagram 
\[
H^*(X_\res) \supset 
\xymatrix{
V \ar[rr]^{\theta\phantom{ABC}} & & 
H^*(\Gr(2,5)) \\ 
 & H^*(X_\sing) \ar[ur]_{r^*} \ar[ul]^{\pi^*}
}
\cong V/W  
\]
so that $\theta \circ \pi^* = r^*$, where $\theta$ is 
given in \eqref{eq:theta_Gr(2,5)} and  
\begin{enumerate} 
\item $\pi^* \colon H^*(X_\sing) \to H^*(X_\res)$ 
is injective and the image is contained in $V$; 
\item $r^* \colon H^*(X_\sing) \to H^*(X_\sm) =H^*(\Gr(2,5))$ is 
neither injective nor surjective; 
\item 
$W\subset \Image \pi^* \subsetneqq V$ and 
$\pi^*(\Ker r^*) = W$. 
\end{enumerate} 
\end{theorem}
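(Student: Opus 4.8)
The plan is to follow the same three-step scheme used for the $\Gr(2,4)$ transition in \S\ref{sec:Gr(2,4)}: first compute $H^*(X_\sing)$ together with the injectivity of $\pi^*$, then pin down $\Image\pi^*$ as an explicit subspace of $H^*(X_\res)$ in the basis \eqref{eq:basis_toric_Gr(2,5)}, and finally analyze $r^*$ so as to read off $\Ker r^*$ and $\Image r^*$. Since $\theta$, $V$, $W$ are already written in these coordinates by \eqref{eq:theta_Gr(2,5)} and \eqref{eq:V_W_Gr(2,5)}, the closing assertions $\theta\circ\pi^* = r^*$ and the chain $W\subset\Image\pi^*\subsetneqq V$ then reduce to comparisons of explicit vectors.

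For statement (1) I would study the crepant toric morphism $\pi\colon X_\res\to X_\sing$ via the hypercohomology spectral sequence $E_2^{i,j}=H^i(X_\sing,R^j\pi_*\underline{\C})\Rightarrow H^{i+j}(X_\res)$, exactly as in the $\Gr(2,4)$ computation. The essential input is that, $\pi$ being toric, each $R^j\pi_*\underline{\C}$ is constructible along the orbit stratification, supported on the closures of the strata dual to the six non-simplicial cones that were subdivided, with stalks the cohomology of the corresponding toric exceptional fibres; in particular $R^j\pi_*\underline{\C}=0$ for $j$ odd. This parity forces all differentials $d_r$ ($r\ge 2$) to vanish, so the sequence degenerates at $E_2$. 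Degeneration of the bottom row $E_2^{i,0}=H^i(X_\sing,\underline{\C})$ yields simultaneously the injectivity of $\pi^*$ and the Betti numbers of $X_\sing$, and expresses $H^*(X_\res)$ as $\Image\pi^*$ plus the higher-direct-image contributions. Matching the surviving bottom row against \eqref{eq:basis_toric_Gr(2,5)} gives an explicit basis of $\Image\pi^*$; that this basis lies in $V=\Ker N_2\cap\Ker N_3$ I would verify directly against the residue matrices $N_2$, $N_3$ extracted from the quantum products of $m_2$, $m_3$, or structurally, since pull-back classes are monodromy-invariant and hence annihilated by the residue endomorphisms (cf.\ Theorem \ref{thm:conifold_transition_qc}).

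For (2) and (3) I would describe $r\colon\Gr(2,5)\to X_\sing$ through its vanishing-cycle geometry, using the relative-cohomology long exact sequences in the style of \eqref{eq:pistar_rstar}: the kernel of $r^*$ is spanned by the classes killed when the vanishing cycles of the smoothing are collapsed, and $\pi^*$ of these should match the span $W=\langle m_1^4m_2-m_1^4m_3,\ 2m_1^5-5m_1^4m_2\rangle$ recorded after \eqref{eq:V_W_Gr(2,5)}; this gives $\pi^*(\Ker r^*)=W$ and non-injectivity of $r^*$. Because $\theta$ has kernel exactly $W$ and (once established) $W\subset\Image\pi^*$, the desired factorization $r^*=\theta\circ\pi^*$ is checked on the finite basis of $\Image\pi^*$ against \eqref{eq:theta_Gr(2,5)} and the classical Schubert pull-backs. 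The strict inclusion $\Image\pi^*\subsetneqq V$ I would get by exhibiting one class of $V$ that is not a pull-back, for instance $m_1^4m_2$ (equal to $m_1^4m_3$ modulo $W\subset\Image\pi^*$, with the two not separately pull-backs); equivalently, since $\theta$ maps $V$ onto $H^*(\Gr(2,5))$ with kernel $W\subset\Image\pi^*$, one has $\dim\Image r^*=\dim\Image\pi^*-\dim W$, so non-surjectivity of $r^*$ and the strictness $\Image\pi^*\neq V$ are the same statement and follow from the Betti count of (1).

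The commutativity check and the vanishing-cycle bookkeeping are routine once the inputs are in place; the genuine obstacle is the first step. Unlike the $\Gr(2,4)$ transition, whose singular locus is a single $\PP^1$ carrying a transverse $A_1$-singularity with $\PP^1$-fibres, here $X_\sing$ has six subdivided non-simplicial cones, so its singular locus is a union of several higher-dimensional toric strata with more intricate exceptional fibres. Determining each $R^j\pi_*\underline{\C}$ --- that is, the precise stratification and the cohomology of every exceptional fibre --- and thereby the Betti numbers of $X_\sing$ is the geometrically delicate part; everything downstream is a comparison against the already-tabulated quantum products of $m_1$, $m_2$, $m_3$.
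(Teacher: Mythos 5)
Your outline for part (1) is workable in principle and close in spirit to what the paper does for $\Gr(2,4)$, but the paper actually switches strategy for $\Gr(2,5)$: rather than determining the sheaves $R^j\pi_*\underline{\C}$ (which, as you note, is delicate here because the exceptional fibres jump over $S_1\cap S_2$), it computes $H^*(E)$ and $H^*(S)$ by Mayer--Vietoris from the explicit toric pieces $E_i\cong\PP^1\times\Bl_{\PP^1}(\PP^3)$, $S_i\cong\PP^3$, then gets $H^*(X_\res,E)\cong H^*(X_\sing,S)$ from the restriction maps $H^{2k}(X_\res)\to H^{2k}(E)$, and finally extracts $H^*(X_\sing)$ and $\Image\pi^*$ from the naturality of the two relative long exact sequences. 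Also, your claim that vanishing of $R^j\pi_*\underline{\C}$ for odd $j$ ``forces all differentials to vanish'' only kills $d_2$ on parity grounds; $d_3$ maps even $j$ to even $j$, and you additionally need the odd-degree vanishing of the cohomology of the supporting strata. This is fixable, but it is a real step, not automatic.

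The genuine gap is in your treatment of $r^*$, i.e.\ parts (2)--(3) and the identity $\theta\circ\pi^*=r^*$. The map $r$ is only a topological retraction, and ``checking against the classical Schubert pull-backs'' is not an available move: computing $r^*$ on the middle cohomology is precisely the hard content. Concretely, $H^8(X_\sing)\cong\C^2$ is spanned via $\pi^*$ by $m_1^4$ and $m_1^2\alpha$; the image of $m_1^4$ is forced (since $m_1\mapsto\omega_{(1,0)}$, both being restrictions of $\cO(1)$ from $\PP^9$), but the image of $m_1^2\alpha$ is not determined by any formal argument. The paper shows that compatibility with cup product by $m_1^2$ only pins down $r^*\circ(\pi^*)^{-1}(3m_1^4-5m_1^2\alpha)=a(\omega_{(3,1)}-\omega_{(2,2)})$ up to an unknown constant $a$, and the constant is evaluated ($a=-1$) by pairing with the homology class $r_*[\Omega_{(1,1)}]=[V(2,9)]$; establishing that identity requires exhibiting a flat degeneration of the Schubert cycle $\Gr(2,4)=\Omega_{(1,1)}\subset\Gr(2,5)$ to the toric subvariety $V(2,9)\subset X_\sing$, obtained by restricting the defining family to the linear subspace $\{Z_{15}=Z_{25}=Z_{35}=Z_{45}=0\}\cong\PP^5$. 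This compatible sub-degeneration is the essential geometric input, and nothing in your vanishing-cycle sketch supplies it; without it you cannot conclude $\theta\circ\pi^*=r^*$ on $H^8$, nor identify $\pi^*(\Ker r^*)$ with $W$.
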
 

Let us describe a degeneration of $\Gr(2,5)$ to $X_\sing$. 
By the Pl\"ucker embedding, $\Gr(2,5)$ can be realized 
as the codimension 3 subvariety $X_t \subset \PP^9$ (with $t\neq 0$) 
cut out by the following five equations: 
\begin{align*} 
t Z_{12} Z_{34} - Z_{13} Z_{24} + Z_{14}Z_{23} &= 0  \\
t Z_{12} Z_{35} - Z_{13} Z_{25} + Z_{15}Z_{23} & = 0 \\ 
t Z_{12} Z_{45} - Z_{14} Z_{25} + Z_{15}Z_{24} & = 0 \\ 
t Z_{13} Z_{45} - Z_{14} Z_{35} + Z_{15} Z_{34} & = 0 \\
t Z_{23} Z_{45} - Z_{24} Z_{35} + Z_{25} Z_{34} & = 0
\end{align*} 
where $(Z_{12},Z_{13},Z_{14},Z_{15},Z_{23},Z_{24},Z_{25}, 
Z_{34}, Z_{35}, Z_{45})$ are homogeneous co-ordinates 
of $\PP^9$. 
The central fiber $X_0$ gives the singular toric variety $X_\sing$. 
Let $z_1,z_2,\dots,z_9$ denote the homogeneous co-ordinates 
of the toric variety $X_\sing$ corresponding to the toric divisors 
$R_1,\dots, R_9$. 
Let $L=\cO(R_1)$ be the line bundle on $X_\sing$ corresponding to the 
Cartier toric divisor $R_1$. This line bundle $L$ defines 
an embedding of $X_\sing$ into $\PP^9$ via the 
following basis of $H^0(X_\sing, L)$: 
\begin{align*} 
Z_{12} & = z_1 \quad & 
Z_{13} & = z_6 z_7 \quad & 
Z_{14} & = z_4 z_5 z_6 \quad & 
Z_{15} & = z_2 z_5 z_6 \\  
Z_{23} & = z_7 z_8 \quad &  
Z_{24} & = z_4 z_5 z_8 \quad & 
Z_{25} & = z_2 z_5 z_8 \quad & 
Z_{34} & = z_3 z_4 \\ 
Z_{35} & = z_2 z_3 \quad & 
Z_{45} & = z_9. 
\end{align*} 
The image of this embedding coincides with $X_0$. 

We start with the computation of $H^*(X_\sing)$. 
For a subset $\{i_1,\dots,i_k\} \subset \{1,2,\dots,9\}$, 
we write 
\[
V(i_1,\dots,i_k)\subset X_\sing \qquad 
\text{or} \qquad 
\tV(i_1,\dots,i_k) \subset X_\res 
\]
for the closed toric subvarieties associated with the 
cone $\langle r_{i_1}, r_{i_2},\dots,r_{i_k}\rangle$. 
Let $E\subset X_\res$ denote the exceptional set 
of the resolution $\pi \colon X_\res \to X_\sing$ 
and let $S\subset X_\sing$ denote the singular locus. 
We have 
\[
S= S_1 \cup S_2,  
\qquad 
E = E_1 \cup E_2 
\]
with $S_1 = V(2,3,4,5)$, $S_2 = V(5,6,7,8)$, 
$E_1 = \tV(3,5)$, $E_2 = \tV(5,7)$ and 
\begin{align*} 
& S_1 \cong S_2 \cong \PP^3,  && 
S_1 \cap S_2 \cong \PP^1, \\ 
& E_1 \cong E_2 \cong \PP^1 \times \Bl_{\PP^1}(\PP^3), 
&& 
E_1 \cap E_2 \cong \PP^1 \times \PP^1 
\times \PP^1. 
\end{align*} 
Here $\Bl_{\PP^1}(\PP^3)$ denotes the blowup of 
$\PP^3$ along a line $\PP^1$. 
The toric variety $X_\sing$ has transversely conifold $\{xy=zw\}$ 
singularities along the smooth locus of $S$. 
Since odd cohomology groups of $E_i, S_i, E_1\cap E_2, S_1\cap S_2$ 
vanish, the Mayler-Vietoris exact sequences give  
\[
\begin{CD} 
0 @>>> H^*(S) @>>> 
H^*(S_1) \oplus H^*(S_2) @>>> H^*(S_1\cap S_2) 
@>>> 0 \\
0 @>>> H^*(E) @>>> H^*(E_1) \oplus H^*(E_2) 
@>>> H^*(E_1\cap E_2) @>>> 0 
\end{CD} 
\]
and thus 
\begin{align*} 
H^*(S) & = \C, \ 0, \ \C, \ 0 ,\ \C^2, \ 0, \ \C^2 && 
\text{for $* = 0,1,2,3,4,5,6$} \\ 
H^*(E) & = \C, \ 0, \ \C^3 , \ 0,\ \C^5, \ 0, \ \C^5, \ 0, \ \C^2 
&& \text{for $* = 0,1,2,3,4,5,6,7,8$}. 
\end{align*} 

\begin{lemma} 
\label{lem:rel_coh} 
The relative cohomology group of the pair $(X_\res,E)$ is given by 
the following table. 
\begin{center} 
\begin{tabular}{c|ccccccccccccc}
{\rm degree} $p$ & $0$ & $1$ &$2$ &$3$ &$4$ &$5$ &$6$ &$7$& $8$ 
& $9$ & $10$ & $11$ & $12$ \\
\hline
\parbox[c][15pt][c]{0pt}{}
$H^p(X_\res,E)$ & $0$ & $0$ & $0$ & $0$ & $0$ & $\C$ & $0$ &  $\C$
& $\C^2$ & $0$ & $\C^3$ & $0$ & $\C$ 
\end{tabular}
\end{center}
The relative cohomology $H^*(X_\sing,S)\cong H^*(X_\res,E)$ 
is given by the same table. 
\end{lemma}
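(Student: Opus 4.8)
The plan is to prove both assertions at once by computing $H^*(X_\res,E)$ and observing that the stated isomorphism $H^*(X_\sing,S)\cong H^*(X_\res,E)$ is formal. Since $\pi$ restricts to an isomorphism $X_\res\setminus E \xrightarrow{\sim} X_\sing\setminus S$ between the complements of the exceptional set and the singular locus, and both $X_\res$ and $X_\sing$ are compact, we have $H^*(X_\res,E)\cong H^*_c(X_\res\setminus E)=H^*_c(X_\sing\setminus S)\cong H^*(X_\sing,S)$, which gives the last sentence of the lemma. It therefore suffices to determine $H^*(X_\res,E)$.

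I would then use the long exact sequence of the pair,
$\cdots\to H^{p-1}(E)\to H^p(X_\res,E)\to H^p(X_\res)\xrightarrow{i^*}H^p(E)\to\cdots$.
The Betti numbers of the smooth toric variety $X_\res$ are read directly off the additive basis \eqref{eq:basis_toric_Gr(2,5)}, giving $\dim H^{2\bullet}(X_\res)=1,3,4,4,4,3,1$ with odd cohomology vanishing; the groups $H^*(E)$ were computed just above the lemma. Splitting the sequence into short exact sequences $0\to\operatorname{coker}(i^*_{p-1})\to H^p(X_\res,E)\to\Ker(i^*_p)\to 0$ reduces the whole table to the ranks of the restriction maps $i^*_p\colon H^p(X_\res)\to H^p(E)$. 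As everything is even, only $p=0,2,4,6,8$ contribute, and a dimension count shows that the claimed table is equivalent to the four statements: $i^*_2$ is an isomorphism, $i^*_4$ and $i^*_6$ are injective, and $i^*_8$ is surjective (injectivity in degree $2$ already forces the isomorphism there, both sides being $3$-dimensional, and $i^*_0$ is an isomorphism by connectedness).

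The core is thus these four rank statements, and I would reformulate them by Poincar\'e duality on the smooth $X_\res$. Writing $i_1,i_2$ for the inclusions of the smooth projective fourfolds $E_1=\tV(3,5)$ and $E_2=\tV(5,7)$, the relation $i_j^*\gamma=0$ holds (by duality on $E_j$) if and only if $\gamma$ annihilates $\Image((i_j)_*)$ under the Poincar\'e pairing, so $\Ker(i^*_p)=(\Image(i_1)_*+\Image(i_2)_*)^\perp$ in degree $12-p$. Since $H^*(X_\res)$ is generated by degree-two toric divisor classes and each $E_j$ is a torus-invariant subvariety, the restriction $i_j^*$ is surjective; the projection formula then gives $\Image((i_j)_*)=[E_j]\cdot H^*(X_\res)$, the principal ideal generated by $[E_1]=R_3R_5$ (resp. $[E_2]=R_5R_7$). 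Hence $i^*_p$ is injective iff $[E_1]\cdot H^{8-p}(X_\res)+[E_2]\cdot H^{8-p}(X_\res)=H^{12-p}(X_\res)$, and $i^*_8$ is surjective iff $[E_1],[E_2]$ are linearly independent in $H^4(X_\res)$. These are finite-dimensional statements in the explicitly known ring $H^*(X_\res)=\C[m_1,m_2,m_3]/\langle m_2^2,\,m_3^2,\,m_1^2(m_1-m_2)(m_1-m_2-m_3)(m_1-m_3)\rangle$, with $R_3=m_1-m_2$, $R_5=m_1-m_2-m_3$, $R_7=m_1-m_3$.

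The main obstacle is verifying the ideal-filling conditions in the middle degrees $p=4,6$, i.e.\ that the two principal ideals generated by $[E_1]$ and $[E_2]$ jointly span $H^8$ and $H^6$. Degree-$8$ surjectivity is immediate, since expanding $[E_1]$ and $[E_2]$ modulo $m_2^2=m_3^2=0$ shows they are independent (the coefficients of $m_1m_2$ and $m_1m_3$ differ), and the degree-$2$ isomorphism then follows once injectivity is established; but the middle-degree injectivity genuinely requires computing in the quotient ring. I expect this to be the only real labor, and to be entirely mechanical monomial linear algebra — the conceptual content lies in the Gysin/Poincar\'e-duality reduction, which converts the relative-cohomology computation into a statement purely about the known intersection ring of $X_\res$ and the toric cycle classes $[E_1]=R_3R_5$, $[E_2]=R_5R_7$.
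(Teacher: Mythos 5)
Your proposal is correct and follows essentially the same route as the paper: the long exact sequence of the pair $(X_\res,E)$, reduction to exactly the same four rank statements for the restriction maps $H^{2k}(X_\res)\to H^{2k}(E)$ (isomorphism for $k=0,1$, injectivity for $k=2,3$, surjectivity for $k=4$), and a verification using the toric structure. Your Poincar\'e-duality/Gysin reformulation in terms of the ideals $[E_1]\cdot H^*(X_\res)+[E_2]\cdot H^*(X_\res)$ is a clean, explicit instantiation of what the paper calls ``standard methods'', and the deferred computations do check out (for instance $[E_1]m_2-[E_2]m_2=m_1m_2m_3$ immediately yields the degree-$6$ spanning, hence injectivity of $i_4^*$ and $i_6^*$).
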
 
\begin{proof} 
This follows from the relative cohomology exact sequence 
associated with the pair $(X_\res, E)$. 
Since the odd cohomology groups of $X_\res$ and 
$E$ vanish, we have the exact sequence 
\[
0 \longrightarrow H^{2k}(X_\res,E) \longrightarrow H^{2k}(X_\res) 
\longrightarrow H^{2k}(E) \longrightarrow 
H^{2k+1}(X_\res,E) \longrightarrow 0 
\]
for each integer $k$. 
It suffices to study the restriction map $H^{2k}(X_\res) \to 
H^{2k}(E)$. Since the spaces $X_\res$, $E$ are toric, this can be done by 
standard methods: we find that 
\begin{itemize} 
\item $H^0(X_\res) \to H^0(E)$, $H^2(X_\res) \to H^2(E)$ 
are isomorphisms; 
\item $H^4(X_\res) \to H^4(E)$, $H^6(X_\res) \to H^6(E)$ are 
injective; 
\item $H^8(X_\res) \to H^8(E)$ is surjective. 
\end{itemize} 
The conclusion follows. 
\end{proof} 

\begin{lemma} 
\label{lem:coh_Xsing}
The cohomology group of $X_\sing$ is given by the following table. 
\begin{center} 
\begin{tabular}{c|ccccccccccccc}
{\rm degree} $p$ & $0$ & $1$ &$2$ &$3$ &$4$ &$5$ &$6$ &$7$& $8$ 
& $9$ & $10$ & $11$ & $12$ \\
\hline
\parbox[c][15pt][c]{0pt}{}
$H^p(X_\sing)$ & $\C$ & $0$ & $\C$ & $0$ & $\C$ & $0$ & $\C$ &  $0$
& $\C^2$ & $0$ & $\C^3$ & $0$ & $\C$ 
\end{tabular}
\end{center} 
Moreover the map $\pi^* \colon H^*(X_\sing) \to H^*(X_\res)$ 
is injective 
and $\Image \pi^*$ has the following basis: 
\[
1, \ m_1,\ m_1^2, \ m_1^3, \ m_1^4, \ m_1^2 \alpha, \ 
m_1^5, \ m_1^4m_2, \ m_1^4 m_3, \ m_1^6  
\]
with $\alpha = m_1m_2 + m_1 m_3 - m_2 m_3$. 
In particular, we have $\Image \pi^* \subsetneqq V$. 
\end{lemma}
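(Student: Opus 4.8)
The plan is to compute $H^*(X_\sing)$ from the long exact sequence of the pair $(X_\sing,S)$ and then to read off $\Image\pi^*$ degree by degree, the only genuinely nontrivial input being a restriction computation in degree $8$. Feeding the isomorphism $H^*(X_\sing,S)\cong H^*(X_\res,E)$ of Lemma \ref{lem:rel_coh} and the cohomology of $S$ computed above into
\[
\cdots \to H^p(X_\sing,S) \to H^p(X_\sing) \to H^p(S) \to H^{p+1}(X_\sing,S) \to \cdots,
\]
most Betti numbers are forced immediately. The points needing care are degrees $4$ and $6$, where one has $0\to H^{2k}(X_\sing)\to H^{2k}(S)=\C^2 \to H^{2k+1}(X_\sing,S)=\C \to H^{2k+1}(X_\sing)\to 0$. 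Since $X_\sing$ is a projective toric variety its odd cohomology vanishes, so $H^{2k+1}(X_\sing)=0$; this forces the connecting map $\C^2\to\C$ to be onto and gives $H^4(X_\sing)=H^6(X_\sing)=\C$, while for $p\ge 8$ one has $H^p(S)=0$ and hence $H^p(X_\sing)\cong H^p(X_\sing,S)$. This yields the stated table.

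Next I would prove injectivity of $\pi^*$, splitting into two ranges. In degrees $\le 6$ we have $\dim H^{2k}(X_\sing)=1$, and since the ample generator $c_1(L)\in H^2(X_\sing)$ satisfies $\pi^*c_1(L)=m_1=R_1$ with $m_1^k\neq 0$, the map $\pi^*$ is injective there. In degrees $\ge 8$, $S$ has real dimension $6$, so $H^{2k}(S)=H^{2k-1}(S)=0$; the long exact sequence then gives $H^{2k}(X_\sing)\cong H^{2k}(X_\sing,S)\cong H^{2k}(X_\res,E)$, and the canonical map $H^{2k}(X_\res,E)\to H^{2k}(X_\res)$ is injective because $H^{2k-1}(E)=0$. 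Composing shows $\pi^*$ is injective in all degrees.

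For the image, degrees $\le 6$ give $\Span\{m_1^k\}$, and in degrees $10$ and $12$ we have $\dim H^{2k}(X_\sing)=\dim H^{2k}(X_\res)$, so $\pi^*$ is an isomorphism onto $\Span\{m_1^5,m_1^4m_2,m_1^4m_3\}$ and $\C m_1^6$ respectively. The decisive case is degree $8$: the isomorphisms above identify $\Image\pi^*$ in this degree with the image of $H^8(X_\res,E)\to H^8(X_\res)$, which equals $\Ker\!\left(H^8(X_\res)\to H^8(E)\right)$, a $2$-plane. The class $m_1^4$ lies in it, because on each component $\pi|_{E_i}\colon E_i\to S_i\cong\PP^3$ the class $m_1|_{E_i}$ is pulled back from $\PP^3$, so $m_1^4|_{E_i}$ is the pullback of a class in $H^8(\PP^3)=0$. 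To find the second generator I would evaluate the restriction through the intersection pairing, using $[E_1]=\PD(R_3R_5)$ and $[E_2]=\PD(R_5R_7)$ with $R_3=m_1-m_2$, $R_5=m_1-m_2-m_3$, $R_7=m_1-m_3$; one checks that $m_1^2\alpha$ pairs to $0$ with both $R_3R_5$ and $R_5R_7$, so it too restricts to $0$ on $E$ and, being independent of $m_1^4$, spans the kernel together with it. This gives $\Image\pi^*=\Span\{1,m_1,m_1^2,m_1^3,m_1^4,m_1^2\alpha,m_1^5,m_1^4m_2,m_1^4m_3,m_1^6\}$.

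Finally, comparing this basis with the given basis of $V$ shows every listed class lies in $V$, whereas $\alpha$ and $m_1\alpha$ belong to $V$ but not to $\Image\pi^*$ (in degrees $4$ and $6$ the image is only $\C m_1^2$ and $\C m_1^3$), giving $\Image\pi^*\subsetneqq V$. I expect the degree-$8$ identification to be the main obstacle: every other degree is forced by the dimension counts above together with the vanishing of odd cohomology, but pinning down the second generator requires the explicit toric restriction to $E_1$ and $E_2$, equivalently the two intersection numbers $\int_{X_\res}m_1^2\alpha\cup R_3R_5$ and $\int_{X_\res}m_1^2\alpha\cup R_5R_7$, which is routine but unavoidable.
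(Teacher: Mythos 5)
Your overall skeleton matches the paper's proof (long exact sequences of the pairs $(X_\sing,S)$ and $(X_\res,E)$, excision, and the identification of $\pi^*(H^8(X_\sing))$ with $\Ker(H^8(X_\res)\to H^8(E))$), and your degree-$8$ argument is fine: since $H^8(E_i)$ is the top cohomology of the $4$-fold $E_i$, the restriction of $m_1^2\alpha$ is indeed detected by the two intersection numbers against $R_3R_5$ and $R_5R_7$. But there is one genuine gap, and it sits exactly at the step the paper works hardest on: you dispose of degrees $4$--$7$ by asserting that ``$X_\sing$ is a projective toric variety, so its odd cohomology vanishes.'' That is false for singular projective toric varieties. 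A counterexample in the same spirit as this paper: let $Y$ be the toric $3$-fold of the fan over the faces of the cube $[-1,1]^3$ (a reflexive polytope, so $Y$ is projective and Gorenstein Fano). It has six isolated conifold points, and a small toric resolution $\widetilde Y\to Y$ with $\dim H^2(\widetilde Y)=8-3=5$; since $H^3(\widetilde Y)=0$, one gets $H^3(Y)\cong\operatorname{coker}\bigl(H^2(\widetilde Y)\to H^2(E)\cong\C^6\bigr)\neq 0$. So odd cohomology of such degenerations is exactly the kind of thing that can fail to vanish, and the vanishing of $H^5(X_\sing)$ and $H^7(X_\sing)$ is something to be \emph{proved}, not quoted. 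Without it, your exact sequence $0\to H^{2k}(X_\sing)\to\C^2\to\C\to H^{2k+1}(X_\sing)\to 0$ for $k=2,3$ only pins down an Euler characteristic, leaving $(\C,0)$ and $(\C^2,\C)$ both possible; this ambiguity then propagates into your injectivity claim for $\pi^*$ and the identification of $\Image\pi^*$ in degrees $4$ and $6$.

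The paper closes this hole by naturality: comparing the long exact sequences of $(X_\sing,S)$ and $(X_\res,E)$, the connecting map $H^p(S)\to H^{p+1}(X_\sing,S)\cong H^{p+1}(X_\res,E)=\operatorname{coker}\bigl(H^p(X_\res)\to H^p(E)\bigr)$ is surjective for $p=4,6$ precisely when the images of $H^p(X_\res)\to H^p(E)$ and $H^p(S)\to H^p(E)$ together span $H^p(E)$, and this joint spanning is verified by an explicit toric computation (note $H^4(E)=\C^5$ and $H^6(E)=\C^5$, so neither map alone is surjective and the check is not vacuous). You need to supply this computation, or some equivalent argument, before the table and everything downstream of it is justified.
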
 
\begin{proof} 
The relative cohomology exact sequence for the pair 
$(X_\sing,S)$ and the previous 
Lemma \ref{lem:rel_coh} give 
$H^i(X_\sing)  \cong H^i(S)$ for $i=0,1,2,3$, 
the exact sequences  
\[
0 \longrightarrow H^{p}(X_\sing) \longrightarrow H^{p}(S) 
\longrightarrow H^{p+1}(X_\sing,S) \longrightarrow H^{p+1}(X_\sing) 
\longrightarrow 0  
\]
for $p=4,6$, and $H^k(X_\sing,S) \cong H^k(X_\sing)$ for $k=8,9,10,11,12$.  
To determine $H^p(X_\sing)$ for $p=4,5,6,7$, we use naturality 
of the long exact sequence. We have the commutative diagram: 
\[
\xymatrix{
0 \ar[r] & H^p(X_\sing) \ar[r] \ar[d]  & H^p(S) \ar[r]\ar[d] 
& H^{p+1}(X_\sing,S) \ar[r]\ar@{=}[d] & 
H^{p+1}(X_\sing) \ar[r] & 0 
\\
0 \ar[r] & H^p(X_\res) \ar[r] & 
H^p(E) \ar[r]  & H^{p+1}(X_\res,E) \ar[r] & 0 
}
\]
for $p=4,6$, 
where the rows are exact and the columns are induced by 
$\pi \colon X_\res \to X_\sing$. 
For both $p=4$ and $p=6$, we can show that the images 
of the maps $H^p(X_\res) \to H^p(E)$ and $H^p(S) \to H^p(E)$ 
together span $H^p(E)$, and thus 
$H^p(S) \to H^{p+1}(X_\sing,S)$ is surjective. 
The first statement follows. 

To show the second statement, we note that the toric divisor $R_1$ 
is Cartier and ample on $X_\sing$. Therefore the class $m_1 = R_1$ 
on $X_\res$
lies in the image of $\pi^*\colon H^2(X_\sing) \to H^2(X_\res)$. 
It follows that $m_1^i$ is a generator of $\pi^*(H^{2i}(X_\sing))\cong \C$ 
for $i=1,2,3,6$. 
The image of the map $\pi^* \colon H^8(X_\sing) \to H^8(X_\res)$ can be 
computed via the commutative diagram: 
\[
\xymatrix{
0 \ar[r] & H^8(X_\sing,S) \ar[r] \ar@{=}[d] 
& H^8(X_\sing) \ar[r] \ar[d]^{\pi^*}& 0 \\ 
0 \ar[r] & H^8(X_\res,E) \ar[r] 
& H^8(X_\res) \ar[r] & H^8(E) \ar[r] & 0.   
}
\]
Therefore $\pi^*(H^8(X_\sing))$ equals the kernel of the 
restriction $H^8(X_\res) \to H^8(E)$, and we can show 
that it is spanned by $m_1^4$ and $m_1^2 \alpha$.  
By a similar argument, we find that 
$\pi^* \colon H^{10}(X_\sing) \cong H^{10}(X_\res)$.  
The conclusion follows. 
\end{proof} 

Finally we compute the map $r^*\colon H^*(X_\sing) \to H^*(X_\sm)$. 
\begin{lemma} 
\label{lem:r^star} 
The map $r^*\circ (\pi^*)^{-1} \colon 
\Image \pi^* \to H^*(X_\sm)$ sends the basis 
of $\Image \pi^*$ given in Lemma \ref{lem:coh_Xsing} as follows: 
\begin{alignat*}{3} 
m_1^i & \longmapsto \omega_{(1,0)}^i & \quad & 0\le i\le 6, \\ 
m_1^2 \alpha & \longmapsto \omega_{(1,0)}^2 
\omega_{(2,0)}, \\ 
m_1^4 m_2 & \longmapsto 2 \omega_{(3,2)}, \\
m_1^4 m_3 & \longmapsto 2 \omega_{(3,2)}. 
\end{alignat*} 
\end{lemma}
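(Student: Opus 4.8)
The plan is to use that $r^*$ is a ring homomorphism for the cup product and that $\pi^*$ is an injective ring homomorphism, so that $\psi := r^* \circ (\pi^*)^{-1}$ is a well-defined ring homomorphism on the subring $\Image \pi^* \subset H^*(X_\res)$. The first step is to pin down $\psi(m_1)$. Since the toric divisor $R_1$ is Cartier and ample on $X_\sing$ and $m_1 = R_1 = \pi^*(c_1(L))$ with $L = \cO(R_1)$, and since $L$ is the line bundle defining the common embedding of $X_\sing$ and $X_\sm = \Gr(2,5)$ into $\PP^9$, the retraction $r$ carries the hyperplane class of $X_\sing \subset \PP^9$ to that of $X_\sm \subset \PP^9$, i.e.\ $r^*(c_1(L)) = \omega_{(1,0)}$. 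Hence $\psi(m_1) = \omega_{(1,0)}$, and because $\psi$ is multiplicative and each $m_1^i$ lies in $\Image \pi^*$ (Lemma \ref{lem:coh_Xsing}), we obtain $\psi(m_1^i) = \omega_{(1,0)}^i$ for $0 \le i \le 6$. This disposes of all basis vectors of the form $m_1^i$.

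Next I would upgrade this to a compatibility of Poincar\'e pairings. Evaluating on the top class gives $\psi(m_1^6) = \omega_{(1,0)}^6 = 5\,\omega_{(3,3)}$, so $\int_{X_\sm}\psi(m_1^6) = 5 = \int_{X_\res} m_1^6$, the degree of $\Gr(2,5)$ in its Pl\"ucker embedding. Since $\Image \pi^* \cap H^{12}(X_\res) = \C\,m_1^6$ is all of $H^{12}(X_\res)$ (Lemma \ref{lem:coh_Xsing}), the identity $\int_{X_\sm}\psi(w) = \int_{X_\res} w$ holds on $H^{12}$, and multiplicativity together with the fact that $\Image \pi^*$ is a subring gives
\[
\int_{X_\sm}\psi(u)\cup\psi(v) = \int_{X_\res} u\cup v \qquad \text{for all } u,v \in \Image \pi^*.
\]
For the degree-$10$ vectors the target $H^{10}(\Gr(2,5)) = \C\,\omega_{(3,2)}$ is one-dimensional, so pairing against $\omega_{(1,0)} = \psi(m_1)$ fixes the coefficient: $\psi(m_1^4 m_2) = c\,\omega_{(3,2)}$ with $c = \int_{X_\res} m_1^5 m_2$, and likewise for $m_1^4 m_3$. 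Thus the claims $m_1^4 m_2, m_1^4 m_3 \mapsto 2\,\omega_{(3,2)}$ reduce to the toric intersection numbers $\int_{X_\res} m_1^5 m_2 = \int_{X_\res} m_1^5 m_3 = 2$, routine to evaluate from the Stanley--Reisner presentation of $H^*(X_\res)$.

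The genuinely hard case is the degree-$8$ vector $m_1^2 \alpha$. Here $r^*$ restricts to an isomorphism $H^8(X_\sing) \cong H^8(\Gr(2,5)) = \Span\{\omega_{(3,1)},\omega_{(2,2)}\}$, but the pairing compatibility is of little help: the only class of $\Image \pi^*$ in complementary degree $4$ is $m_1^2$, and its image $\omega_{(1,0)}^2 = \omega_{(2,0)} + \omega_{(1,1)}$ pairs equally (both give $1$) with $\omega_{(3,1)}$ and $\omega_{(2,2)}$. Writing $\psi(m_1^2\alpha) = a\,\omega_{(3,1)} + b\,\omega_{(2,2)}$, pairing against $\omega_{(1,0)}^2$ only yields $a+b = \int_{X_\res} m_1^4\alpha = 3$, leaving an ambiguity in the $\omega_{(3,1)} - \omega_{(2,2)}$ direction; the same ambiguity survives if one instead uses the relation $m_1^3\alpha = m_1^4 m_2 + m_1^4 m_3 - m_1^3 m_2 m_3$ and multiplies by $\omega_{(1,0)}$, since multiplication by $\omega_{(1,0)}\colon H^8 \to H^{10}$ annihilates $\omega_{(3,1)}-\omega_{(2,2)}$. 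To break the ambiguity one must pair $\psi(m_1^2\alpha)$ against a degree-$4$ class \emph{outside} $\Image r^*$, for instance $\omega_{(1,1)} = [\{V \subset F_4\}]$; this determines $b = \int_{X_\sm}\psi(m_1^2\alpha)\cup\omega_{(1,1)}$, and the answer $2\,\omega_{(3,1)}+\omega_{(2,2)} = \omega_{(1,0)}^2\omega_{(2,0)}$ corresponds to $b = 1$. The plan is to realize this Schubert cycle as a flat family over the degeneration parameter $t$ using the explicit Pl\"ucker equations, take its limit in $X_0 = X_\sing$ as an explicit toric cycle, lift it through $\pi$ to $X_\res$, and evaluate the resulting toric intersection number; conservation of intersection numbers in the flat family identifies this with the desired pairing on $\Gr(2,5)$. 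This matching of Schubert cycles with toric strata under the given degeneration is the main obstacle.

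Finally I would record consistency: the kernel of the resulting map is spanned by $m_1^4 m_2 - m_1^4 m_3$ and $2\,m_1^5 - 5\,m_1^4 m_2$, which is exactly $W$, in agreement with the assertion $\pi^*(\Ker r^*) = W$ in Theorem \ref{thm:Gr(2,5)_topology} and with $\psi$ being the restriction of the map $\theta$ of \eqref{eq:theta_Gr(2,5)} to $\Image \pi^*$.
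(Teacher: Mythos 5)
Your proposal is correct and follows essentially the same route as the paper: pin down $\psi(m_1)=\omega_{(1,0)}$ via the common hyperplane class of the embeddings into $\PP^9$, use multiplicativity and pairing compatibility (the paper phrases this as commutative diagrams for cup product by $m_1$) to handle $m_1^i$ and the degree-$10$ classes, and break the degree-$8$ ambiguity by pairing against the cycle $\Omega_{(1,1)}$ tracked through the degeneration. The one step you flag as ``the main obstacle'' is exactly what the paper's Lemma \ref{lem:r_star_homology} supplies: restricting the Pl\"ucker equations to the linear subspace $\{Z_{15}=Z_{25}=Z_{35}=Z_{45}=0\}\cong\PP^5$ exhibits a flat degeneration of $\Omega_{(1,1)}\cong\Gr(2,4)$ to the toric stratum $V(2,9)$, whence $r_*[\Omega_{(1,1)}]=[V(2,9)]$ and the toric intersection number $m_1^2\alpha\cdot R_2R_9=1$ fixes the remaining coefficient, giving $2\omega_{(3,1)}+\omega_{(2,2)}=\omega_{(1,0)}^2\omega_{(2,0)}$ as you anticipated.
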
 
\begin{proof} 
Abusing notation we write $m_1$ for the class of the 
Cartier toric divisor $R_1$ on $X_\sing$, so that 
$\pi^*(m_1) = m_1$. 
Note that $m_1\in H^2(X_\sing)$ or 
$\omega_{(1,0)}\in H^2(X_\sm)$ 
is the restriction 
of the ample class $\cO(1)$ on $\PP^9$ to $X_0$ or 
to $X_t$ (with $t\neq 0$) respectively. 
Therefore $r^*$ sends $m_1$ to $\omega_{(1,0)}$. 
The images of $m_1^4 m_2, m_1^4 m_3\in \pi^*(H^{10}(X_\sing))$ 
under $r^* \circ( \pi^*)^{-1}$ can be easily 
computed from the commutative diagram: 
\[
\xymatrix
{
H^{10}(X_\sing) \ar[r]^{\cup m_1}\ar[d]^{r^*} 
& H^{12}(X_\sing) \ar[d]^{r^*}_{\cong} \\ 
H^{10}(X_\sm) \ar[r]^{\cup \omega_{(1,0)}}_{\cong} 
& H^{12}(X_\sm).  
}
\] 
It remains to compute the image of $m_1^2 \alpha\in \pi^*(H^8(X_\sing))$. 
By the commutative diagram
\[
\xymatrix
{
H^{8}(X_\sing) \ar[r]^{\cup m_1^2}\ar[d]^{r^*} 
& H^{12}(X_\sing) \ar[d]^{r^*}_{\cong} \\ 
H^{8}(X_\sm) \ar[r]^{\cup \omega_{(1,0)}^2} 
& H^{12}(X_\sm), 
}
\]
it follows that the kernel of $m_1^2 \colon H^8(X_\sing) 
\to H^{12}(X_\sing)$ should be sent to the kernel 
of $\omega_{(1,0)}^2 \colon H^8(X_\sm) \to H^{12}(X_\sm)$ 
under $r^*$. 
Therefore we have  
\[
r^* \circ (\pi^*)^{-1} (3 m_1^2 - 5 m_1^2 \alpha ) = a 
(\omega_{(3,1)} - \omega_{(2,2)})  
\]
for some $a\in \C$. This implies 
$r^* \circ (\pi^*)^{-1}(m_1^2 \alpha) =  \frac{9-a}{5} \omega_{(3,1)} 
+ \frac{6+a}{5} \omega_{(2,2)}$. 
To determine $a$, we use the fact $r_*[\Omega_{(1,1)}] = [V(2,9)]$ 
proved in Lemma \ref{lem:r_star_homology} below. 
Since the map $\pi \colon \tV(2,9) \to V(2,9)$ is birational, we have 
$\pi_*[\tV(2,9)] = [V(2,9)]$. Thus 
\begin{align*} 
m_1^2 \alpha \cdot [\tV(2,9)] & = (\pi^*)^{-1}(m_1^2 \alpha) \cdot 
[V(2,9)] \\ 
& = \left( r^*\circ(\pi^*)^{-1}(m_1^2 \alpha)\right)  \cdot [\Omega_{(1,1)}] 
= \frac{6+a}{5}. 
\end{align*} 
On the other hand, $m_1^2\alpha \cdot [\tV(2,9)] = 
m_1^2 \alpha R_2 R_9 \cdot [X_\res]= 1$. Therefore $a=-1$ 
and the conclusion follows. 
\end{proof} 

\begin{lemma}
\label{lem:r_star_homology} 
Consider the map $r_* \colon H_8(X_\sm) = H_8(\Gr(2,5)) \to H_8(X_\sing)$ 
between homology groups. We have 
$r_*[\Gr(2,4)] = [V(2,9)]$, where $\Gr(2,4)$ is identified with 
the Schubert cycle $\Omega_{(1,1)}$ in $\Gr(2,5)$ 
(see \eqref{eq:Schubert}). 
\end{lemma}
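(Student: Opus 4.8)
The plan is to realize $\Omega_{(1,1)}\cong\Gr(2,4)$ as the general fibre of a flat family over the degeneration parameter and to identify $r_*[\Omega_{(1,1)}]$ with the class of its flat limit; since the Schubert class $[\Omega_{(1,1)}]$ is independent of the chosen flag, I am free to fix the standard flag $F_k=\langle e_1,\dots,e_k\rangle$, so that $\Omega_{(1,1)}=\{V: V\subseteq F_4\}$ is cut out in $X_t$ by the vanishing of the Pl\"ucker coordinates $Z_{15}=Z_{25}=Z_{35}=Z_{45}=0$. Set $\Lambda:=\{Z_{15}=Z_{25}=Z_{35}=Z_{45}=0\}\cong\PP^5$. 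On $\Lambda$ the last four of the five Pl\"ucker relations defining $X_t$ vanish identically, so $X_t\cap\Lambda=\{tZ_{12}Z_{34}-Z_{13}Z_{24}+Z_{14}Z_{23}=0\}$ is the Pl\"ucker quadric of $\Gr(2,4)$, and $\mathcal C:=\mathcal X\cap(\Lambda\times\Delta)$ is the hypersurface $\{tZ_{12}Z_{34}-Z_{13}Z_{24}+Z_{14}Z_{23}=0\}\subset\Lambda\times\Delta$, which is flat and proper over the disc $\Delta$. Its general fibre is $\Omega_{(1,1)}^{(t)}\subset X_t$ and its central fibre is the reduced rank-$4$ quadric $Q_0:=\Lambda\cap\{Z_{13}Z_{24}=Z_{14}Z_{23}\}\subset X_\sing$.

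Next I would pass to homology using the total-space retraction. By the local topology of the transversal conifold degeneration, the map $r$ of \S\ref{subsec:geometry_conifold} is the restriction to $X_t$ of a retraction $\rho\colon\mathcal X\to X_0=X_\sing$ with $\rho|_{X_0}=\mathrm{id}$. As fibres of the flat family $\mathcal C\to\Delta$ over the connected base $\Delta$, the cycles $\mathcal C_t$ and $\mathcal C_0$ are algebraically, hence homologically, equivalent in $\mathcal C$, so $[\mathcal C_t]=[\mathcal C_0]$ in $H_*(\mathcal X)$. Applying $\rho_*$ and using $\rho|_{X_t}=r$, $\rho|_{X_0}=\mathrm{id}$ together with the reducedness of $Q_0$, I obtain $r_*[\Omega_{(1,1)}]=\rho_*[\mathcal C_t]=\rho_*[\mathcal C_0]=[Q_0]$.

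It then remains to show $[Q_0]=[V(2,9)]$, and here I would avoid analysing all components of $\Lambda\cap X_\sing$: it suffices to combine a single containment with irreducibility. On the orbit closure $V(2,9)$ one has $Z_{45}=z_9=0$, $Z_{35}=z_2z_3=0$, $Z_{25}=z_2z_5z_8=0$ and $Z_{15}=z_2z_5z_6=0$, so $V(2,9)\subseteq\Lambda\cap X_\sing$; and since every point of $X_\sing$ satisfies $Z_{13}Z_{24}=Z_{14}Z_{23}$, we have $\Lambda\cap X_\sing\subseteq Q_0$, whence $V(2,9)\subseteq Q_0$. As $V(2,9)$ (the closure of the orbit of the $2$-dimensional cone $\langle r_2,r_9\rangle$) and $Q_0$ are both irreducible of dimension $4$, this containment forces $V(2,9)=Q_0$, and therefore $[Q_0]=[V(2,9)]$. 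This is also geometrically natural: $Q_0$ is the singular quadric cone over $\PP^1\times\PP^1$, i.e.\ the toric limit of $\Gr(2,4)$.

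The main obstacle is the input used in the second paragraph, namely that the topological retraction $r$ coincides with the restriction of an honest retraction $\rho$ of the algebraic total space $\mathcal X$ onto its central fibre, so that specialization of cycles along the family is computed by $\rho_*$ and the homotopy argument applies. Producing $\rho$ amounts to the stratified (Clemens-type) collapse map for $\mathcal X\to\Delta$ along the transversal conifold locus $S=S_1\cup S_2$; once this is available the homology computation is formal. The only other point needing care, the flatness of $\mathcal C\to\Delta$ and the reducedness of $Q_0$, is immediate, since $\mathcal C$ is a degree-two hypersurface family in $\Lambda\times\Delta$ whose defining equation specializes to the reduced rank-$4$ quadric $Q_0$.
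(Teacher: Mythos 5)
Your proposal is correct and follows essentially the same route as the paper: restrict the degenerating family to the coordinate subspace $\PP^5=\{Z_{15}=Z_{25}=Z_{35}=Z_{45}=0\}$, observe that it becomes a flat degeneration of the Pl\"ucker quadric $\Gr(2,4)=\Omega_{(1,1)}$ to $X_0\cap\PP^5=V(2,9)$, and conclude by specialization of the cycle class. The paper asserts the two points you elaborate (that specialization in the flat family computes $r_*$, and that $X_0\cap\PP^5$ is the toric subvariety $V(2,9)$) without proof, so your additional details are consistent refinements rather than a different argument.
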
 
\begin{proof} 
We consider the linear subspace 
\[
\PP^5 = \{Z_{15} = Z_{25} = Z_{35} = Z_{45} =0\} \subset \PP^9 
\]
and restrict the family $X_t$ to $\PP^5$. 
Note that $X_t\cap \PP^5$ is defined by the 
equation $t Z_{12} Z_{34} - Z_{13}Z_{24} + Z_{14}Z_{23} = 0$ in 
$\PP^5$. 
For $t\neq 0$, $X_t \cap \PP^5$ is identified with the image of 
$\Omega_{(1,1)} \cong \Gr(2,4)$ under the 
Pl\"ucker embedding. 
On the other hand, 
$X_0 \cap \PP^5$ is identified with the toric subvariety 
$V(2,9)$ of $X_0 = X_\sing$. Since the family $t\mapsto X_t\cap \PP^5$ 
gives a flat degeneration of $\Gr(2,4)$ to $V(2,9)$, the conclusion 
follows. 
\end{proof} 

Theorem \ref{thm:Gr(2,5)_topology} follows easily from the 
computations in Lemmas \ref{lem:rel_coh}, \ref{lem:coh_Xsing}, 
\ref{lem:r^star}.  

\section{Conjecture for partial flag varieties} 
\label{sec:conjecture} 
In this section we formulate a conjecture which describes 
the change of quantum cohomology under the extremal transition 
\cite{BCFKvS} of 
partial flag varieties. For a sequence of integers 
$0<n_1< n_2 < \cdots < n_l <n$, we consider the partial 
flag variety: 
\[
\Fl(n_1,n_2,\dots,n_l,n) = \{ V_1 \subset V_2 \subset 
\cdots \subset V_l \subset \C^n : 
\dim V_i = n_i\}.  
\]
This space admits a flat degeneration to a Gorenstein Fano toric 
variety $X_\sing$ and $X_\sing$ has a small crepant resolution 
$X_\res$. 

We recall the toric varieties $X_\sing$, $X_\res$ from \cite{BCFKvS}. 
Let $D$, $S$ be the following subsets of $\Z^2$: 
\begin{align*} 
D & = \bigcup_{p=1}^{l} 
\{(i,j) \in \Z^2: 0\le i \le n-n_p-1,\  0 \le j\le n_p-1\}, \\ 
S & =\{ (n-n_1,0), (n-n_2,n_1), \dots, (n-n_l,n_{l-1}), (0,n_l)\}.  
\end{align*} 
Elements of $D$ are called \emph{dots} and elements of $S$ 
are called \emph{stars}.  
Elements of $D\cup S$ form vertices of the \emph{ladder diagram}  
\cite[\S 2]{BCFKvS} which is an oriented graph. 
The set $E$ of oriented edges of the ladder diagram consists of 
pairs $e = (t(e), h(e))$ with $t(e),h(e) \in D \cup S$ 
such that $h(e) - t(e) = (1,0)$ or $h(e)-t(e) = (0,-1)$, 
where $t(e)$ is the tail and $h(e)$ is the head.  
Consider the vector space $\R^D$ with the standard basis 
$\{\be_v : v\in D\}$. 
We set $\be_s =0$ for $s\in S$. 
The fan $\Sigma_{\sing}$ of the toric variety $X_\sing$ is defined on 
$\R^D$; one-dimensional cones of the fan are parametrized 
by $E$ and their primitive generators are given by 
\[
r_e := \be_{h(e)} - \be_{t(e)} 
\]
for $e\in E$. The convex hull $\Delta \subset \R^D$ 
of the vectors $r_e$, $e \in E$ is a reflexive polytope 
\cite{BCFKvS}, and the fan $\Sigma_\sing$ is defined to be 
the set of cones over faces of $\Delta$. 
The fan $\Sigma_\res$ of $X_\res$ is given by a simplicial subdivision 
of $\Sigma_\sing$. 
For $1\le i\le l$, 
a \emph{roof} $\cR_i$ is a collection of edges connecting 
the $(i+1)$th star $(n-n_{i+1},n_i)\in S$ and the $i$th star 
$(n-n_{i}, n_{i-1})\in S$ along the ``boundary'' of the 
ladder diagram (where we set $n_0 =0$, $n_{l+1} = n$). 
More precisely, 
\begin{align*} 
\cR_i & = \{ ((n-n_{i+1},n_i), (n-n_{i+1},n_i-1)) \} \\
& \quad \cup \{((p,n_i-1),(p+1,n_i-1)) : n-n_{i+1} \le p \le n- n_i -2 \} \\
& \quad \cup \{((n-n_i-1, q), (n-n_i-1,q-1)) : n_{i-1}+1 \le q \le n_i -1\} \\
& \quad \cup \{((n-n_i-1,n_{i-1}), (n-n_i,n_{i-1}))\}.  
\end{align*} 
A \emph{box} of the ladder diagram is a subset of 4 vertices 
of the form 
\[
b =\{(i,j), (i+1,j), (i,j+1),(i+1,j+1)\} \subset 
D \cup S. 
\] 
The corner $\cC_b$ of $b$ is the subset 
$\{ ((i,j+1),(i,j)), ((i,j),(i+1,j))\}$ of edges adjacent to 
the lower left vertex $(i,j)$ of the box $b$.  
We write $\cC^-_b$ for the upper right corner 
$\{((i,j+1),(i+1,j+1)), ((i+1,j+1), (i+1,j))\}$. 
Let $\Bx$ denote the set of boxes of the ladder diagram. 
The fan $\Sigma_\res$ of $X_\res$ is a simplicial 
subdivision of $\Sigma_\sing$ such that $\cR_1,\dots,\cR_l$ 
and $\cC_b$ with $b\in \Bx$ are primitive collections. 
Here we mean by a primitive collection a minimal subset $P$ of $E$ 
such that the cone spanned by $\{r_e: e \in P\}$ does 
not belong to the fan $\Sigma_\res$. 
The corresponding toric variety $X_\res$ gives a small crepant 
resolution of $X_\sing$ \cite[\S 3]{BCFKvS}. 
We write $\pi \colon X_\res \to X_\sing$ for the natural 
map. 

The Mori cone of $X_\sm = \Fl(n_1,\dots,n_l,n)$ is a simplicial 
cone generated by $\Delta_i$, where $\Delta_i$ 
is the class of a curve in the fiber 
of the natural map $\Fl(n_1,\dots,n_l,n) \to 
\Fl(n_1,\dots, \widehat{n_i},\dots,n_l,n)$. 
We write $\overline{q}_i$ for the Novikov variable of $X_\sm$ 
corresponding to $\Delta_i$ for $1\le i\le l$. 
The Mori cone of $X_\res$ is also a simplicial cone generated 
by the curve classes $C_i$ with $1\le i\le l$ and $C_b$ with $b\in \Bx$  
\cite[\S 3]{BCFKvS}, 
where $C_i$ is defined by the ``roof relation'' 
$\sum_{e \in \cR_i} r_e = 0$ and 
$C_b$ is defined by the ``box relation''
$\sum_{e\in \cC_b} r_e - \sum_{e\in \cC_b^-}r_e=0$. 
We write $q_i$, $q_b$ for the Novikov variables corresponding 
to $C_i$, $C_b$. 
The morphism $\pi \colon X_\res \to X_\sing$ contracts 
the extremal rays $\R_{\ge 0} C_b$ with $b\in \Bx$. 
We write $\phi_i$ with $1\le i\le l$ and $\phi_b$ with $b\in \Bx$ 
for the basis of $H^2(X_\res)$ dual to $C_i, C_b$. 
We also write $\overline{\phi}_i$ with $1\le i\le l$ for the 
basis of $H^2(X_\sm)$ dual to $\Delta_i$. 

\begin{conjecture} 
Let $X_\sm = \Fl(n_1,\dots,n_l,n)$, $X_\sing$, $X_\res$ be as above. 
\begin{enumerate} 
\item The structure constants of the 
small quantum product of $X_\res$ 
are polynomials in $q_1,\dots,q_l$ with coefficients 
in rational functions of $q_b$, $b\in \Bx$. 

\item 
The small quantum connection of $X_\res$ 
has logarithmic singularities 
along the normal crossing divisor $\prod_{b\in \Bx} (q_b-1) =0$, 
and the residue endomorphisms along $q_b=1$ (with $b\in \Bx$) 
are nilpotent. 
More precisely, $(\phi_i \star)$ with $1\le i\le l$ is regular 
along $\Exc := \{q_b = 1 \, (\forall b\in \Bx)\}$, 
$(\phi_b\star)$ with $b\in \Bx$ has simple poles along 
$\{q_b=1\}$ but no poles along $\{q_{b'}=1\}$ for $b' \neq b$, 
and 
\[
N_b := \Res_{q_b=1} (\phi_b\star) \frac{dq_b}{q_b} \bigg|_{\Exc} 
\]
is a nilpotent endomorphism which does not depend on $q_1,\dots,q_l$. 
\item 
Define a filtration $0\subset W \subset V \subset H^*(X_\res)$ by 
$V = \bigcap_{b\in \Bx} \Ker N_b$ and 
$W = V \cap \sum_{b\in \Bx} \Image N_b$. 
Along the locus $\Exc$, 
the small quantum connection of $X_\res$ induces a residual flat 
connection on the bundle $(V/W) \times \Exc \to \Exc$. 
We have a linear map $\theta \colon V/W \to H^*(X_\sm)$ 
which intertwines the residual flat connection 
with the small quantum connection of $X_\sm$ 
under the identification $q_i = \overline{q}_i$ of Novikov 
variables. 
More precisely, $\theta$ intertwines 
the action of $(\phi_j \star)|_{\Exc}$ 
on $V/W$ with the action of $(\overline{\phi}_j\star)|_{\overline{q}_1= q_1,
\dots, \overline{q}_l = q_l}$ 
on $H^*(X_\sm)$ for $1\le j\le l$. 
Moreover $\theta$ preserves the Poincar\'e pairing. 

\item Let $\pi\colon X_\res \to X_\sing$ denote the resolution 
and let $r \colon X_\sm \to X_\sing$ denote the retraction. 
We have $\Image \pi^* \subset V$ and the following commutative 
diagram: 
\[
H^*(X_\res) \supset 
\xymatrix{
V \ar[r] & V/W \ar[r]^-{\theta} &  
H^*(X_\sm) \\ 
 & H^*(X_\sing) \ar[ur]_{r^*} \ar[ul]^{\pi^*}
}
\]
\end{enumerate} 
\end{conjecture}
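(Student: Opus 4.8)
The plan is to combine Givental's mirror theorem for the toric variety $X_\res$ with a degeneration analysis comparing the genus-zero Gromov--Witten invariants of $X_\res$ and $X_\sm$, using the ladder-diagram combinatorics of \cite{BCFKvS} to organize the contracted curve classes $C_b$. Throughout, the guiding model is the conifold case of Theorem~\ref{thm:conifold_transition_qc}, in which the box curves $C_b$ play the role of the $(-1,-1)$-curves $E_i$ and the exceptional variables $q_b$ play the role of $q^{E_i}$.

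First I would establish parts (1) and (2) by analyzing the curve classes contracted by $\pi$. Since $\pi$ contracts exactly the rays $\R_{\ge 0}C_b$, the pullback $\pi^*L$ of an ample class on $X_\sing$ pairs trivially with the $C_b$ and positively with every other extremal ray; hence for fixed $\pi_*(d)$ the classes $d$ of bounded $\pi^*L$-degree differ only in the $C_b$-directions. As in the proof of Lemma~\ref{lem:finiteness_pole}, the only unbounded families are multiple covers inside the exceptional fibres over boxes, and resumming their contributions produces rational functions of the $q_b$ with simple poles along $\{q_b=1\}$; this yields part (1) together with the regularity of $(\phi_i\star)$ along $\Exc$. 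For the nilpotency of $N_b$ I would use the grading: because the resolution is crepant, $C_b\cdot c_1(X_\res)=0$, so $\deg q_b=0$ and $(\phi_b\star)$ is homogeneous of cohomological degree $+2$. Its residue $N_b$ is therefore a degree-raising operator on the finite-dimensional graded space $H^*(X_\res)$, hence automatically nilpotent, and the monodromy $\exp(2\pi\sqrt{-1}N_b/z)$ is unipotent.

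The core is part (3), for which I would seek an analogue of the Li--Ruan identity of Theorem~\ref{thm:Li-Ruan} adapted to this extremal transition, namely
\[
\sum_{d:\pi_*(d)=\beta}\corr{\pi^*v_1,\dots,\pi^*v_n}^\res_{0,n,d}
=\corr{r^*v_1,\dots,r^*v_n}^\sm_{0,n,\beta}
\]
for $v_i\in H^*(X_\sing)$ and $\beta\neq 0$, with the left-hand sum finite. Granting such a comparison, the argument of Theorem~\ref{thm:conifold_transition_qc}(3) transfers: for $v,w\in V=\bigcap_b\Ker N_b$ the limit $\lim_{q_\exc\to 1}v\star w$ exists, lands in $V$ by pairing against the exceptional classes (using that $r^*$ annihilates them), and descends to $V/W$. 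The map $\theta$ is forced to equal $r^*\circ(\pi^*)^{-1}$ on $\Image\pi^*/W$ by the commutativity required in part (4), so that $\theta\circ\pi^*=r^*$; the inclusion $\Image\pi^*\subset V$ is checked by the cohomological computations exemplified in Lemmas~\ref{lem:coh_Xsing} and~\ref{lem:r^star}. On the remaining generators of $V/W$ one defines $\theta$ so as to intertwine $(\phi_j\star)|_{\Exc}$ with $(\overline\phi_j\star)$; compatibility of the Poincar\'e pairings then follows from the comparison identity together with the fact that $r^*$ preserves the pairing.

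The hard part will be twofold. First, there is no Li--Ruan theorem available off the shelf: unlike the conifold case, the singularities of $X_\sing$ are transversal $A_1$ along higher-dimensional strata whose exceptional loci can be as intricate as $\PP^1\times\Bl_{\PP^1}(\PP^3)$, so the requisite comparison of genus-zero invariants would have to be proved through a degeneration or symplectic-sum formula tailored to these strata. Second, the examples show that $\theta$ is genuinely quantum: for $\Gr(2,4)$ it involves $\sqrt{-1}$, and for $\Gr(2,5)$ the space $V$ is strictly larger than $\Image\pi^*$, so $\theta$ is \emph{not} determined by $r^*\circ(\pi^*)^{-1}$ on the extra classes of $V$. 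Constructing $\theta$ on that complementary part, and proving it intertwines the products, therefore cannot be reduced to the topology of the diagram~\eqref{eq:diagram}; one must match the quantum structure constants directly, which is where I expect the main obstacle to lie.
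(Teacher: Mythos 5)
The statement you are addressing is stated in the paper as a \emph{conjecture}: the paper offers no proof of it. Its only support is the explicit verification in the three examples of \S\ref{sec:flag}--\S\ref{sec:Gr(2,5)}, where both sides are computed by hand (quantum Schubert calculus for $X_\sm$, Givental's mirror theorem for $X_\res$ as in Appendix \ref{sec:app}) and compared, together with the topological analysis of \S\ref{subsec:topology_Gr(2,5)}. Your proposal is therefore a strategy for a general proof rather than something comparable to an argument in the paper, and --- as you yourself say --- it does not close the gaps that make the statement a conjecture. It is worth naming them precisely, because the examples in the paper show that some of your intermediate steps are not merely unproved but would need modification.

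First, the Li--Ruan-type identity you posit is the essential missing input, and it is genuinely open here: the singular locus of $X_\sing$ is a transversal conifold along positive-dimensional strata, so the local analysis behind Theorem \ref{thm:Li-Ruan} and the $1/n^3$ multiple-cover resummation of Lemma \ref{lem:finiteness_pole} (which is what produces the clean $q^{E_i}/(1-q^{E_i})$ poles in dimension three) does not apply. Indeed the $\Gr(2,5)$ computation already shows your sketch of parts (1)--(2) is too optimistic: $(m_2\star)$ has simple poles along $q_2+q_3=1$ as well as along $q_2=1$, so ``resumming exceptional multiple covers yields poles only along $\{q_b=1\}$'' is false as stated, and the precise pole structure near $\Exc$ is itself part of what the conjecture asserts. (Your grading argument for nilpotency of $N_b$ --- $\deg q_b=0$ and $(\phi_b\star)$ homogeneous of degree $+2$ --- is correct, but presupposes the residue exists.) Second, and more seriously, defining $\theta$ ``so as to intertwine the products'' on the complement of $\Image\pi^*/W$ in $V/W$ is circular: the existence of such a $\theta$ is exactly the content of part (3), and the examples show it cannot be extracted from the topology of the transition --- for $\Gr(2,5)$ one has $\Image\pi^*\subsetneqq V$ (the classes $\alpha$, $m_1\alpha$ lie outside $\Image\pi^*$, cf.\ Lemma \ref{lem:r^star}), and for $\Gr(2,4)$ the analogous extension requires the factor $\sqrt{-1}$. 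So the proposal correctly locates the obstacles but supplies no mechanism for overcoming either of them; the statement remains a conjecture.
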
 

\begin{remark} 
This conjecture is closely related to \cite[Conjecture 4.1.2]{BCFKvS}. 
\end{remark}

\appendix
\section{Computing quantum cohomology of a
toric variety}
\label{sec:app}
We explain how to compute the small quantum cohomology
of a weak-Fano toric manifold using Givental's mirror theorem
\cite{Givental:mirrorthm_toric}.

Let $X_\res$ be the toric variety in \S\ref{sec:Gr(2,5)},
which is a crepant resolution of a toric degeneration
of $\Gr(2,5)$.
The $I$-function of $X_\res$ is a cohomology-valued
hypergeometric function given by:
\[
I(q,z) = e^{m\log q/z}
\sum_{\beta \in H_2(X_\res,\Z)}
q^\beta
\prod_{i=1}^9
\frac{\prod_{c=-\infty}^0 (R_i + cz)}
{\prod_{c=-\infty}^{R_i \cdot \beta} (R_i + cz)}
\]
where we set $m \log q := \sum_{i=1}^3 m_i \log q_i$.
In the case at hand, the mirror map is trivial and
the mirror theorem of Givental \cite{Givental:mirrorthm_toric}
says that $I(q,z)$ equals the $J$-function:
\[
J(q,z) = e^{m\log q/z}\left( 1 +
\sum_{i=0}^N
\sum_{\beta\neq 0} \corr{\frac{\phi_i}{z(z-\psi)}}_{0,1,\beta}
\phi^i
q^\beta \right)
\]
where $\{\phi_i\}_{i=0}^N$, $\{\phi^i\}_{i=0}^N$ are mutually dual
bases of the cohomology as in \S \ref{sec:prelim}.
The class $\psi$ is the
first Chern class of the universal cotangent line bundle
over $\overline{M}_{0,1}(X_\res,\beta)$.
More generally, the $I$-function and the $J$-function match
under a change of co-ordinates (mirror map).

The method to determine the quantum product is as follows: we
first find differential operators $\cD_i(z\partial_1,z\partial_2,z\partial_3,
z,q_1,q_2,q_3)$ which are polynomials in $z\partial_i := z q_i \parfrac{}{q_i}$
and $z$ such that we have the asymptotics:
\[
\cD_i I(q,z) = e^{m\log q/z} (\phi_i + O(z^{-1}))
\qquad
0\le i\le N=19.
\]
Then the quantum product by $m_j$, $j=1,2,3$ is determined by
the asymptotics:
\[
z \partial_j (\cD_i I(q,z)) = e^{m\log q/z}( m_j \star \phi_i + O(z^{-1})).
\]
In our case, for the choice of a basis in \eqref{eq:basis_toric_Gr(2,5)},
we can take $\cD_i$ as follows:
\begin{align*}
\cD_0 & = 1, \
\cD_1  = z \partial_1, \
\cD_2 = z\partial_2, \
\cD_3 = z \partial_3, \
\cD_4 = (z\partial_1)^2, \
\cD_5 = z\partial_1 z\partial_2, \\
\cD_6 & = z\partial_1 z\partial_3, \
\cD_7  = z\partial_2 z\partial_3, \
\cD_8 = (z\partial_1)^3 \
\cD_9 = (z\partial_1)^2 z\partial_2, \
\cD_{10} = (z\partial_1)^2 z\partial_3, \\
\cD_{11} & = z\partial_1 z\partial_2 z\partial_3, \
\cD_{12} = (z\partial_1)^4, \
\cD_{13} = (z\partial_1)^3 z \partial_2, \
\cD_{14} = (z\partial_1)^3 z \partial_3, \\
\cD_{15} &= (z\partial_1)^2 z \partial_2 z\partial_3, \
\cD_{16} = (z\partial_1)^5 - q_1(1+q_2+q_3), \\
\cD_{17} & = (z\partial_1)^4 z \partial_2 - q_1q_2, \
\cD_{18} = (z\partial_1)^4 z\partial_3 - q_1 q_3, \\
\cD_{19} & = (z\partial_1)^6 - zq_1(1+q_2+q_3)
-  q_1(1+3 q_2 + 3 q_3 + q_2q_3) z\partial_1 \\
& \quad - q_1(2+q_3)(1-q_2)  z\partial_2
- q_1(2 +q_2)(1-q_3) z \partial_3.
\end{align*}

\bibliographystyle{plain}
\bibliography{transition}
\end{document}